\documentclass[oneside, a4paper]{amsart}
\usepackage[a4paper,  width=15cm, height=24cm]{geometry}
\usepackage{amsmath,amssymb, amsthm,amsfonts}
\usepackage{enumitem}
\usepackage[english]{babel}
\usepackage{tabularx}
\usepackage{tikz}
\usetikzlibrary{intersections, calc}
\usepackage{overpic}
\usepackage[auto]{contour}

\usepackage{graphicx}
\DeclareGraphicsExtensions{pdf, jpg, jpeg, png, pdf_tex}
\newtheoremstyle{dotless}{6pt}{6pt}{\em}{}{\bfseries}{.}{ }{}
\theoremstyle{dotless}
\newtheorem{definition}{Definition}[section]
\newtheorem{defi}[definition]{Definition}
\newtheorem{prop}[definition]{Proposition}

\newtheorem{lem}[definition]{Lemma}

\newtheorem{theorem}[definition]{Theorem}
\newtheorem{corollary}[definition]{Corollary}
\newtheorem{corollaryand}[definition]{Corollary and Definition}

\newtheorem*{remark*}{Remark}
\newcommand{\spann}[1]{\text{span} \{ #1 \}} 
\newcommand{\lspan}[1]{\langle{#1}\rangle}
\newcommand{\one}[1]{#1^{(1)}}
\newcommand{\two}[1]{#1^{(2)}}
\newcommand{\three}[1]{#1^{(3)}}

\newcommand{\n}{\mathfrak{n}}
\newenvironment{mycomment}
{\bigskip
  \hrule
  \medskip
\begin{raggedright}\color{blue}\it}
{\end{raggedright}
  \medskip
  \hrule
  \bigskip
}
%
\title{The Ribaucour families of discrete R-congruences}
\author{Thilo R\"orig and Gudrun Szewieczek}
\begin{document}

\contourlength{1pt}
\contournumber{32}

\maketitle
\begin{center}
\begin{minipage}{11cm}\small
\textbf{Abstract.} While a generic smooth Ribaucour sphere congruence admits exactly two envelopes, a discrete R-congruence gives rise to a 2-parameter family of discrete enveloping surfaces. The main purpose of this paper is to gain geometric insights into this ambiguity. In particular, discrete R-congruences that are enveloped by discrete channel surfaces and discrete Legendre maps with one family of spherical curvature lines are discussed.
\end{minipage}
\vspace*{0.5cm}\\\begin{minipage}{11cm}\small
\textbf{MSC 2010.} 53A40 · 53B25 · 37K25 · 37K35
\end{minipage}
\vspace*{0.5cm}\\\begin{minipage}{11cm}\small
\textbf{Keywords.} discrete differential geometry; Lie sphere geometry; Ribaucour transformation; discrete Legendre maps; Lie inversions;
\end{minipage}
\end{center}
%
%
\section{Introduction}

\noindent Classically, a smooth 2-parameter family of spheres is called \emph{Ribaucour sphere congruence} if  its two enveloping surfaces have corresponding curvature lines. Since a smooth sphere congruence admits at most two envelopes, each such sphere congruence provides a \emph{Ribaucour pair of surfaces}. Over the decades, this transformation concept was extended to submanifolds, partly also with singularities, \cite{MR2103311, MR2320656, MR1901374,saji2020behavior, MR2251001} and various (integrable) approaches for the construction were discussed \cite{MR2254053, MR2028680, MR2743444}. In these developments, special interest was paid to constrained Ribaucour transformations that allow to generate envelopes with special geometric properties, for example, the Darboux transformation of isothermic surfaces  and Ribaucour transformations that preserve  classes of $O$-surfaces \cite{O_surface}. 

A characteristic feature in the theory of smooth Ribaucour transformations are permutability theorems. In discrete differential geometry, those became crucial as underlying concept of integrable discretizations of curvature-line parametrized surfaces and orthogonal coordinate systems \cite{ddg_book, org_principles}. 

\medskip

\noindent Discrete equivalents of Ribaucour sphere congruences were developed in \cite{org_principles} and \cite{DOLIWA1999169}: \emph{discrete R-congruences} are provided by $Q$-nets in the Lie quadric, that is, a discrete congruence of spheres with planar faces. However, more flexibility in the discrete setup results in a 2-parameter family of discrete envelopes; we call it the \emph{Ribaucour family} of a discrete R-congruence (cf.\,Definition \ref{cor_initial_contact_el}).

In the last years, in a variety of works, discrete Ribaucour transformations between general discrete surfaces were discussed \cite{rib_coord, zbMATH01721625, zbMATH01327008}. Furthermore, restricted transformations for particular classes of discrete surfaces as discrete (special) isothermic surfaces \cite{ddg_book, discrete_cmc, zbMATH06406008, MR2004958, MR1676683} and discrete O-surfaces \cite{MR1997461} were investigated, to name just a few.

\medskip \noindent However, to the best of the authors' knowledge, there are no systematic studies of the geometry of the entire Ribaucour families in the literature, since usually pairs of discrete envelopes are considered as the main objects. 
Geometric investigations of this 2-parameter family of envelopes is the main contribution of this work. In most parts we consider the discrete R-congruences as the primary objects of interest and explore the corresponding Ribaucour families by using data provided by the sphere congruence.

\medskip

\noindent This paper is organized as follows. 
We recall basic principles and constructions in Lie sphere geometry in Section \ref{section_prel}.
Section \ref{section_r_congruences} is devoted to the construction of envelopes of discrete R-congruences.
As a crucial result, for any face of a discrete R-congruence, we introduce two involutive Lie inversions that map adjacent contact elements of discrete envelopes onto each other (Proposition~\ref{prop_inversions_for_r}). 
Using these inversions, we obtain a construction of a unique envelope from one prescribed initial contact element.
Hence, in this way, we can parametrize the entire Ribaucour family of a discrete R-congruence.

In Section \ref{section_special_congr} we restrict to discrete R-congruences with special properties and analyse their corresponding Ribaucour families.
This section also reveals relations to recent works on discrete Ribaucour coordinates \cite{rib_coord} and discrete $\Omega$-surfaces \cite{discrete_omega}.
Since we consider the discrete R-congruence as primary object, our approach sheds light on these situations from a different point of view.  

Focusing again on general discrete R-congruences, in Subsection \ref{subsec_rib_pairs}, we discuss how the choice of a facewise constant Lie inversion decomposes a Ribaucour family into pairs of envelopes.
Those Lie inversions also interact well with face-cyclides of these Ribaucour pairs and, therefore, induce a Ribaucour transformation of cyclidic nets as pointed out in Subsection \ref{subsec_cyclidic_nets}. 

As an application of the developed framework for discrete R-congruences, we investigate envelopes with one family of spherical curvature lines in Section \ref{section_spherical}. In particular, we characterize sphere congruences of Ribaucour families containing at least two discrete channel surfaces.

\medskip

\noindent\textbf{Acknowledgements.} The authors would like to thank Fran Burstall for helpful discussions about isothermic Q-nets. Moreover, financial support by JSPS Grant-in-Aid (as part of the FY2017 JSPS Postdoctoral fellowship) and TU Wien (Hörbiger Award) is gratefully acknowledged. 
This research was supported by the DFG Collaborative Research Center TRR
109 ``Discretization in Geometry and Dynamics''.

\section{Preliminaries}\label{section_prel}

\noindent In this section we briefly summarize some basic principles of Lie sphere geometry and recall some concepts that will be crucial for the rest of this work. For more details on this topic, the interested reader is referred to \cite{blaschke} and \cite{book_cecil}.

\medskip

\noindent Throughout the paper we use the hexaspherical model introduced by Lie and work in the vector space $\mathbb{R}^{4,2}$ endowed with the inner product $\lspan{\cdot\, , \cdot}$ given by 
\begin{equation*}
\lspan{ x , y } = -x_1y_1 + x_2y_2 +x_3y_3 +x_4y_4 +x_5y_5 -x_6y_6.
\end{equation*}
Homogeneous coordinates of elements in the projective space $\mathbb{P}(\mathbb{R}^{4,2})$ will be denoted by the corresponding black letter (i.e., oriented spheres $r,s \in \mathbb{P}(\mathcal{L})$ have homogeneous coordinates $\mathfrak{r}, \mathfrak{s} \in \mathbb{R}^{4,2}$ such that $\lspan{\mathfrak{r}} = r$ and $\lspan{\mathfrak{s}} = s$).
If statements hold for arbitrary homogeneous coordinates we do this without explicitly mentioning it.
The projective light cone is denoted by $\mathbb{P}(\mathcal{L})$ and represents the set of oriented 2-spheres in $\mathbb{S}^3$ in this model. Two spheres $r,s \in \mathbb{P}(\mathcal{L})$ are in oriented contact if and only if $\lspan{\mathfrak{r}, \mathfrak{s}}=0$.  Thus, the set $\mathcal{Z}$ of lines in $\mathbb{P}(\mathcal{L})$ corresponds to contact elements.

\medskip

\noindent By breaking symmetry, we can recover various subgeometries of Lie sphere geometry. Thus, let $\mathfrak{p} \in \mathbb{R}^{4,2}$ be a vector that is not lightlike, i.e., $\lspan{\mathfrak{p},\mathfrak{p}} \neq 0$. If $\mathfrak{p}$ is timelike, then $\langle\mathfrak{p}\rangle^\perp \cong
\mathbb{R}^{4,1}$ defines a Riemannian conformal geometry resp.\,M\"obius geometry. 
In this case, elements in $\mathbb{P}(\mathcal{L})\cap \langle \mathfrak{p} \rangle^\perp$ are considered points and will be called point spheres. 
If $\mathfrak{p}$ is spacelike, it determines a Lorentzian conformal geometry $\langle\mathfrak{p}\rangle^\perp \cong \mathbb{R}^{3,2}$ or planar Lie geometry.

%
%
\subsection{Linear systems and linear sphere complexes}
Let $s_1, s_2, s_3 \in \mathbb{P}(\mathcal{L})$ be three spheres that are mutually not in oriented contact, that is, $\lspan{\mathfrak{s_i}, \mathfrak{s_j}} \neq 0$ for $i\neq j \in \{1, 2, 3\}$. The space 
\begin{equation}
  \nonumber
  \spann{s_1, s_2, s_3} \cap \mathbb{P}(\mathcal{L})
\end{equation}
is called a \emph{linear system}\footnote{\cite[\S 53]{blaschke}: Lineare Kugelscharen und Kugelkomplexe} and the Lie invariant
\begin{equation}
  \nonumber
\delta:=\text{sign} \{ \langle \mathfrak{s}_1, \mathfrak{s}_2 \rangle \langle \mathfrak{s}_2, \mathfrak{s}_3 \rangle \langle \mathfrak{s}_3, \mathfrak{s}_1 \rangle  \}
\end{equation}
provides information about its geometry:
\begin{enumerate}[label=$\bullet$]
  \item If $\delta=-1$, then $s_1, s_2, s_3$ span a $(2,1)$-plane. Hence, the spheres of the linear system are curvature spheres of a Dupin cyclide and there exists a 1-parameter family of spheres in oriented contact with all spheres of the linear system -- the spheres of the orthogonal $(2,1)$-plane.

  \item If $\delta=1$, then $s_1, s_2, s_3$ span a $(1,2)$-plane. Thus, there does not exist a sphere that is in oriented contact with all spheres of the linear system, since the orthogonal complement is a $(3,0)$-plane that does not contain any spheres.
\end{enumerate}

\medskip

\noindent Any element $a \in \mathbb{P}(\mathbb{R}^{4,2})$ defines a \emph{linear sphere complex} $\mathbb{P}(\mathcal{L}) \cap a^\perp$, a 3-dimensional family of 2-spheres. Depending on the type of the vector $a$, we distinguish three cases:

\begin{enumerate}[label=$\bullet$]
\item \emph{parabolic} linear sphere complex, $\lspan{ \mathfrak{a}, \mathfrak{a}}=0$: all spheres in the complex are in oriented contact with the sphere defined by $a$, hence the linear sphere complex consists of all contact elements containing the sphere $a$.

\item \emph{hyperbolic} linear sphere complex, $\lspan{ \mathfrak{a}, \mathfrak{a}} < 0$: if we fix a M\"obius geometry by choosing $a$ as the point sphere complex, then the linear sphere complex consists of all point spheres. 

\item \emph{elliptic} linear sphere complex, $\lspan{ \mathfrak{a}, \mathfrak{a}} > 0$: in a M\"obius geometry, these linear sphere complexes then consist of all contact elements that intersect a fixed sphere at a fixed angle (see Fig.~\ref{fig:sphere_complex}, \emph{left}).
\end{enumerate}

\noindent Since elliptic linear sphere complexes will be essential in this paper, we will discuss their geometric construction illustrated in Fig.~\ref{fig:sphere_complex} in more detail. Assume that $a \in \mathbb{P}(\mathbb{R}^{4,2})$ determines an elliptic linear sphere complex. Additionally fix a parabolic complex $q$, as well as a hyperbolic complex $p$ to distinguish a M\"obius geometry modelled on $p^\perp$. 

Without loss of generality, we choose $\mathfrak{q}=(1,-1,0,0,0,0)$, $\mathfrak{p}=(0,0,0,0,0,1)$ and homogeneous coordinates $\mathfrak{a} = (\mathfrak{a}_i)_i \in \mathbb{R}^{4,2}$ of $a$ such that $\lspan{\mathfrak{a}, \mathfrak{q}}=1$.  By defining
\begin{equation*}
  c:=(\mathfrak{a}_3, \mathfrak{a}_4, \mathfrak{a}_5), \ 
  R:= \left\vert\sqrt{1 + \|c\|^2 - 2\mathfrak{a}_1}\right\vert 
  \text{, and } 
  r:= \lspan{\mathfrak{a}, \mathfrak{p}},
\end{equation*}
we obtain two concentric spheres $s_r$ and $s_R$ with center $c$ and radii $r$ and $R$, respectively (see Fig.~\ref{fig:sphere_complex}, \emph{left}). Then a straightforward computation shows that any sphere $s$ in the elliptic linear sphere complex $\lspan{\mathfrak{a}}^\perp$ intersects the sphere $s_R$ under the constant oriented angle $\cos \gamma= \frac{r}{R}$.
If $\lspan{\mathfrak{a}, \mathfrak{q}}=0$, the sphere $s_R$ corresponds to a plane and the sphere $s_r$ becomes the point at infinity. 

%
%
%
\begin{figure}[t]
  \begin{overpic}[width=.48\linewidth]{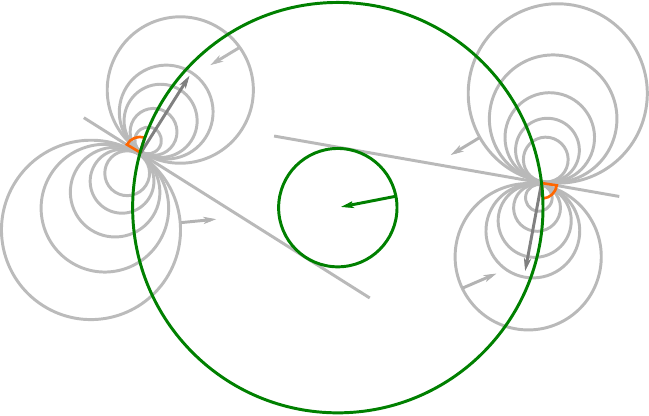}
    \put(44,30){\color{black!60!green}$s_r$}
    \put(25,3){\color{black!60!green}$s_R$}
    \put(17,44){\color{black}\contour{white}{$\gamma$}}
    \put(86,31){\color{black}\contour{white}{$\gamma$}}
  \end{overpic}
  \hfill
  \definecolor{myblue}{RGB}{85, 153, 255}
  \begin{overpic}[width=.48\linewidth]{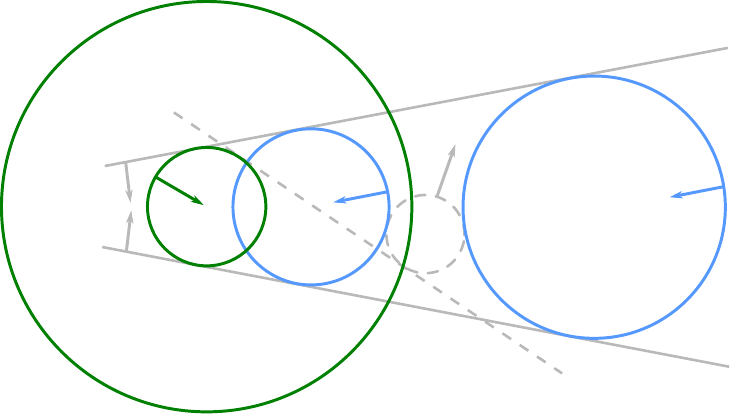}
    \put(25,22){\color{black!60!green}$s_r$}
    \put(40,19){\color{myblue}$s$}
    \put(25,3){\color{black!60!green}$s_R$}
    \put(76,13){\color{myblue}$\sigma(s)$}
  \end{overpic}
  \caption{
    \emph{Left:} The gray spheres lie in the elliptic sphere complex determined by the green spheres $s_R$ and $s_r$. All spheres of the complex intersect the sphere $S_R$ at a constant angle~$\gamma$. 
    \emph{Right:} The Lie inversion~$\sigma$ defined by the elliptic sphere complex determined by $s_R$ and $s_r$ maps the two blue spheres onto each other. The two gray lines and the dashed sphere are fixed by the involution. They determine the image~$\sigma(s)$ of the sphere~$s$.
  }
\label{fig:sphere_complex}
\end{figure}

\subsection{Lie inversions}
Let $a \in \mathbb{P}(\mathbb{R}^{4,2})$, $\lspan{ \mathfrak{a}, \mathfrak{a}} \neq 0$, then the \emph{Lie inversion with respect to the linear sphere complex $a$} is given by
\begin{equation}\label{equ_lie_inversion}
\sigma_\mathfrak{a}:\mathbb{R}^{4,2} \rightarrow \mathbb{R}^{4,2}, \ \ \sigma_\mathfrak{a}(\mathfrak{r}):=\mathfrak{r}-\frac{2\lspan{\mathfrak{r}, \mathfrak{a}}}{\lspan{\mathfrak{a}, \mathfrak{a}}}\mathfrak{a}.
\end{equation}
Lie inversions are involutive linear maps that preserve oriented contact between spheres and, therefore, also contact elements. Moreover, from the definition, we directly conclude that spheres contained in the linear sphere complex $a^\perp$ are fixed by the Lie inversion $\sigma_\mathfrak{a}$. 

For any pair of spheres that are not contained in the linear sphere complex $a^\perp$, we obtain the following property:

\begin{lem}\label{lem_inversion_lin_system}
Let $r, s \in \mathbb{P}(\mathcal{L})$ be two spheres that do not lie in the linear sphere complex $a^\perp$ with $\lspan{\mathfrak{a}, \mathfrak{a}} \neq 0$. Then, the $\spann{r, s, \sigma_{\mathfrak{a}}(r), \sigma_{\mathfrak{a}}(s)} \cap \mathbb{P}(\mathcal{L})$ is a linear system.
\end{lem}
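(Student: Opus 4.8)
The plan is to identify $W := \spann{r, s, \sigma_{\mathfrak{a}}(r), \sigma_{\mathfrak{a}}(s)}$ with a concrete $3$-dimensional subspace of $\mathbb{R}^{4,2}$ and to exhibit three mutually non-contacting spheres spanning it, so that the conclusion follows at once from the definition of a linear system.

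First I would rewrite $W$. By \eqref{equ_lie_inversion}, $\sigma_{\mathfrak{a}}(\mathfrak{r})$ is a linear combination of $\mathfrak{r}$ and $\mathfrak{a}$, and likewise $\sigma_{\mathfrak{a}}(\mathfrak{s})$ of $\mathfrak{s}$ and $\mathfrak{a}$; hence $W \subseteq \spann{\mathfrak{r}, \mathfrak{s}, \mathfrak{a}}$. Conversely, since $r \notin a^\perp$ we have $\lspan{\mathfrak{r}, \mathfrak{a}} \neq 0$, and solving \eqref{equ_lie_inversion} for $\mathfrak{a}$ gives $\mathfrak{a} = \tfrac{\lspan{\mathfrak{a}, \mathfrak{a}}}{2\lspan{\mathfrak{r}, \mathfrak{a}}}\big(\mathfrak{r} - \sigma_{\mathfrak{a}}(\mathfrak{r})\big) \in W$. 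Therefore $W = \spann{\mathfrak{r}, \mathfrak{s}, \mathfrak{a}} = \spann{\mathfrak{r}, \mathfrak{s}, \sigma_{\mathfrak{a}}(\mathfrak{r})}$.

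Next I would show that $r$, $s$, $\sigma_{\mathfrak{a}}(r)$ are pairwise not in oriented contact. Using $\lspan{\mathfrak{r},\mathfrak{r}} = 0$, equation \eqref{equ_lie_inversion} gives
\[
\lspan{\mathfrak{r}, \sigma_{\mathfrak{a}}(\mathfrak{r})} = -\frac{2\lspan{\mathfrak{r},\mathfrak{a}}^2}{\lspan{\mathfrak{a},\mathfrak{a}}} \neq 0,
\]
so $r$ and $\sigma_{\mathfrak{a}}(r)$ are distinct spheres not in oriented contact (in particular $\mathfrak{r}$ and $\sigma_{\mathfrak{a}}(\mathfrak{r})$ are independent). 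It then remains to check $\lspan{\mathfrak{r},\mathfrak{s}} \neq 0$ and $\lspan{\mathfrak{s}, \sigma_{\mathfrak{a}}(\mathfrak{r})} = \lspan{\mathfrak{r},\mathfrak{s}} - \tfrac{2\lspan{\mathfrak{r},\mathfrak{a}}\lspan{\mathfrak{s},\mathfrak{a}}}{\lspan{\mathfrak{a},\mathfrak{a}}} \neq 0$, and I expect this to be the one delicate point. A short computation shows the Gram determinant of $\mathfrak{r},\mathfrak{s},\mathfrak{a}$ equals $\lspan{\mathfrak{r},\mathfrak{s}}\big(2\lspan{\mathfrak{r},\mathfrak{a}}\lspan{\mathfrak{s},\mathfrak{a}} - \lspan{\mathfrak{r},\mathfrak{s}}\lspan{\mathfrak{a},\mathfrak{a}}\big)$, so its non-vanishing — equivalently, $r,s,\sigma_{\mathfrak{a}}(r)$ being pairwise non-contacting, equivalently $W$ being a nondegenerate $3$-plane — is exactly a general-position hypothesis on $r,s$ relative to $a$; the excluded configurations ($r$ in oriented contact with $s$, or $s$ with $\sigma_{\mathfrak{a}}(r)$) really would make $W$ degenerate, and this is where the hypotheses have to be used in full strength.

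Granting this, $W$ is a $3$-dimensional subspace carrying a nondegenerate form; since it contains the null vector $\mathfrak{r}$ it is not definite, hence a $(2,1)$- or $(1,2)$-plane, and $\{r, s, \sigma_{\mathfrak{a}}(r)\}$ is precisely the kind of triple of mutually non-contacting spheres used to define a linear system. As $\spann{r, s, \sigma_{\mathfrak{a}}(r)} = W = \spann{r, s, \sigma_{\mathfrak{a}}(r), \sigma_{\mathfrak{a}}(s)}$, intersecting with $\mathbb{P}(\mathcal{L})$ shows that $\spann{r, s, \sigma_{\mathfrak{a}}(r), \sigma_{\mathfrak{a}}(s)} \cap \mathbb{P}(\mathcal{L})$ is a linear system. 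The only substantive work is the degeneracy bookkeeping in the previous paragraph; everything else is immediate from \eqref{equ_lie_inversion}.
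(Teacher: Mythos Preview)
The paper states this lemma without proof, so there is nothing to compare against directly. Your approach --- rewriting $W$ as $\spann{\mathfrak{r}, \mathfrak{s}, \mathfrak{a}}$ via \eqref{equ_lie_inversion}, computing $\lspan{\mathfrak{r}, \sigma_{\mathfrak{a}}(\mathfrak{r})} \neq 0$, and then analysing nondegeneracy through the Gram determinant --- is the natural one and is carried out correctly.

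You are right to flag the degeneracy bookkeeping, but be aware that the stated hypotheses do \emph{not} in fact exclude $\lspan{\mathfrak{r},\mathfrak{s}} = 0$ or $\lspan{\mathfrak{s}, \sigma_{\mathfrak{a}}(\mathfrak{r})} = 0$; your phrase ``this is where the hypotheses have to be used in full strength'' is slightly optimistic. If, say, $\lspan{\mathfrak{r},\mathfrak{s}} = 0$, then your Gram determinant vanishes and $W$ is genuinely a degenerate $3$-plane, so no triple of mutually non-contacting null vectors can span it (such a triple would force the Gram determinant to equal $2\lspan{s_1,s_2}\lspan{s_2,s_3}\lspan{s_3,s_1} \neq 0$). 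The lemma is therefore being used in the paper with an implicit general-position assumption on $r$ and $s$; your factorisation $\lspan{\mathfrak{r},\mathfrak{s}}\bigl(2\lspan{\mathfrak{r},\mathfrak{a}}\lspan{\mathfrak{s},\mathfrak{a}} - \lspan{\mathfrak{r},\mathfrak{s}}\lspan{\mathfrak{a},\mathfrak{a}}\bigr)$ makes explicit exactly which two configurations must be excluded. Under that reading your argument is complete.
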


\noindent The following two constructions give Lie inversions that map prescribed spheres onto each other: 

\begin{lem}\label{lem_inversion_family}
Let $\lspan{\mathfrak{r}} = r$ and  $\lspan{\bar{\mathfrak{r}}}= \bar{r}$ be fixed homogeneous coordinates of two spheres that are not in oriented contact and let $\sigma_\lambda$ denote the Lie inversion with respect to the linear sphere complex $\mathfrak{n}_\lambda:= \mathfrak{r} - \lambda\bar{\mathfrak{r}}$, where $\lambda \in \mathbb{R}^\times$. Then the following properties hold:
\begin{enumerate}[label=(\roman*)]
\item For any $\lambda \in \mathbb{R}^\times$, the Lie inversion $\sigma_\lambda$ maps the sphere $r$ to the sphere $\bar{r}$.
\item All spheres in oriented contact with $r$ \emph{and} $\bar{r}$ are fixed by any Lie inversion $\sigma_\lambda$.
\item A contact element that contains $r$ is mapped by any Lie inversion~$\sigma_\lambda$, $\lambda \in \mathbb{R}$, to the same contact element; however, different Lie inversions induce different correspondences between the spheres in the two contact elements. \label{lem_inversion_two}
\end{enumerate}
\end{lem}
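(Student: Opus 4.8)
The plan is to reduce all three parts to elementary computations with the form $\lspan{\cdot,\cdot}$, the only structural facts needed being that $\lspan{\mathfrak{r},\mathfrak{r}}=\lspan{\bar{\mathfrak{r}},\bar{\mathfrak{r}}}=0$ (since $r,\bar r\in\mathbb{P}(\mathcal{L})$) and $\lspan{\mathfrak{r},\bar{\mathfrak{r}}}\neq 0$ (since $r$ and $\bar r$ are not in oriented contact). First I would record that $\sigma_\lambda$ is well defined for every $\lambda\in\mathbb{R}^\times$: expanding $\lspan{\mathfrak{n}_\lambda,\mathfrak{n}_\lambda}$ yields $-2\lambda\lspan{\mathfrak{r},\bar{\mathfrak{r}}}\neq 0$, so $\mathfrak{n}_\lambda$ is never lightlike and formula \eqref{equ_lie_inversion} applies. (For $\lambda=0$ one would have $\mathfrak{n}_0=\mathfrak{r}$, which is lightlike, so the statement is to be read with $\lambda\in\mathbb{R}^\times$.)

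For (i), I compute $\lspan{\mathfrak{r},\mathfrak{n}_\lambda}=-\lambda\lspan{\mathfrak{r},\bar{\mathfrak{r}}}$, so the coefficient $2\lspan{\mathfrak{r},\mathfrak{n}_\lambda}/\lspan{\mathfrak{n}_\lambda,\mathfrak{n}_\lambda}$ in \eqref{equ_lie_inversion} equals $1$ and therefore $\sigma_\lambda(\mathfrak{r})=\mathfrak{r}-\mathfrak{n}_\lambda=\lambda\bar{\mathfrak{r}}$; hence $\sigma_\lambda(r)=\bar r$, and $\sigma_\lambda(\bar r)=r$ follows by involutivity. For (ii), if a sphere $t$ is in oriented contact with both $r$ and $\bar r$, then $\lspan{\mathfrak{t},\mathfrak{n}_\lambda}=\lspan{\mathfrak{t},\mathfrak{r}}-\lambda\lspan{\mathfrak{t},\bar{\mathfrak{r}}}=0$, i.e.\ $\mathfrak{t}\in\mathfrak{n}_\lambda^\perp$, and spheres in the complex $\mathfrak{n}_\lambda^\perp$ are fixed by $\sigma_\lambda$ by the remark following \eqref{equ_lie_inversion}.

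For (iii), I would represent the contact element by homogeneous coordinate vectors as $z=\spann{\mathfrak{r},\mathfrak{t}}$ with $\lspan{\mathfrak{t},\mathfrak{t}}=\lspan{\mathfrak{r},\mathfrak{t}}=0$ (an isotropic $2$-plane, whose projectivization is a contact element through $r$). Since $\sigma_\lambda$ is linear and preserves contact elements, $\sigma_\lambda(z)=\spann{\sigma_\lambda(\mathfrak{r}),\sigma_\lambda(\mathfrak{t})}$ is again such a plane; using (i) and the short computation $\sigma_\lambda(\mathfrak{t})=\mathfrak{t}-\alpha(\mathfrak{r}-\lambda\bar{\mathfrak{r}})$ with $\alpha:=\lspan{\mathfrak{t},\bar{\mathfrak{r}}}/\lspan{\mathfrak{r},\bar{\mathfrak{r}}}$, and absorbing the $\lambda\bar{\mathfrak{r}}$-terms (legitimate as $\lambda\neq0$), I get $\sigma_\lambda(z)=\spann{\bar{\mathfrak{r}},\,\mathfrak{t}-\alpha\mathfrak{r}}$ for all $\lambda\in\mathbb{R}^\times$, which is the first assertion. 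For the second, I would track the actual point map: the sphere $\lspan{\mathfrak{r}+\mu\mathfrak{t}}$ of $z$ is sent to $\lspan{\mu(\mathfrak{t}-\alpha\mathfrak{r})+\lambda(1+\mu\alpha)\bar{\mathfrak{r}}}$, and since $1+\mu\alpha=\lspan{\mathfrak{r}+\mu\mathfrak{t},\bar{\mathfrak{r}}}/\lspan{\mathfrak{r},\bar{\mathfrak{r}}}$, its homogeneous coordinates $[\mu:\lambda(1+\mu\alpha)]$ on the fixed line $\sigma_\lambda(z)$ genuinely change with $\lambda$ for every sphere of $z$ other than $r$ (where $\mu=0$) and the unique sphere of $z$ in oriented contact with $\bar r$ — exactly the spheres that are forced to be fixed by (ii). As a projective line has more than two points, such moving spheres exist, so distinct values of $\lambda$ give distinct correspondences.

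The computations are all routine; the only step needing a little care is the last one, where one must carefully distinguish the image \emph{set} $\sigma_\lambda(z)$, which is independent of $\lambda$, from the $\lambda$-family of \emph{bijections} $z\to\sigma_\lambda(z)$ that the inversions induce, and identify precisely which spheres are moved.
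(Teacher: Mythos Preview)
Your proof is correct and follows essentially the same route as the paper's. The paper dispatches (i) and (ii) with a reference to equation~\eqref{equ_lie_inversion}, and for (iii) singles out the sphere $\tilde{\mathfrak{s}}=\lspan{\mathfrak{v},\bar{\mathfrak{r}}}\mathfrak{r}-\lspan{\mathfrak{r},\bar{\mathfrak{r}}}\mathfrak{v}$ in the contact element that lies in every complex $\mathfrak{n}_\lambda^\perp$; this is, up to a scalar, exactly your $\mathfrak{t}-\alpha\mathfrak{r}$, so both arguments identify the same fixed sphere and conclude $\sigma_\lambda(f)=\spann{\tilde s,\bar r}$ independently of $\lambda$. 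Your explicit tracking of the pointwise correspondence via the coordinates $[\mu:\lambda(1+\mu\alpha)]$ actually goes further than the paper, which asserts the ``different correspondences'' clause without justification.
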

\begin{proof}
The properties (i) and (ii) follow directly from equation (\ref{equ_lie_inversion}). To prove the third statement, we suppose that $v \in  \mathbb{P}(\mathcal{L})$ is a sphere in oriented contact with the sphere $r$ and $\lspan{\mathfrak{v}, \mathfrak{\bar{r}}}\neq 0$. Then, the sphere 
\begin{equation*}
\tilde{\mathfrak{s}}:={\lspan{\mathfrak{v}, \bar{\mathfrak{r}}}}\mathfrak{r} - {\lspan{\mathfrak{r}, \bar{\mathfrak{r}}}}\mathfrak{v} 
\end{equation*}
lies in the contact element $f:=\spann{r, v}$ and, furthermore, for any $\lambda \in \mathbb{R}$ in the linear sphere complex $\langle \mathfrak{n}_\lambda \rangle^\perp$. Hence, any Lie inversion $\sigma_\lambda$ preserves the sphere $\tilde{s}$ and maps the contact element~$f$ to the contact element $\sigma_\lambda(f)=\spann{\tilde{s}, \bar{r}}$.
\end{proof}

\begin{lem}\label{lem_unique_inversion_four_spheres}
Let $f$ and $\bar{f}$ be two contact elements sharing a common sphere $s \in \mathbb{P}(\mathcal{L})$. Then, for four spheres $r, t \in f$ and $\bar{r}, \bar{t} \in \bar{f}$  that do not coincide with $s$, there exists a unique Lie inversion $\sigma$ satisfying
\begin{equation*}
\sigma(\mathfrak{r})=\bar{\mathfrak{r}} \ \ \text{and} \ \ \sigma(\mathfrak{t})=\bar{\mathfrak{t}}.
\end{equation*}
\end{lem}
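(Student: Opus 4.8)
The plan is to write down the only possible defining complex of the Lie inversion and then verify that it does the job; uniqueness will come essentially for free. I would work in the non-degenerate situation $f\neq\bar f$, $r\neq t$ and $\bar r\neq\bar t$ --- the first is the content of ``two contact elements'', the second is clearly needed for uniqueness, and given $r\neq t$ the third is forced as well since a Lie inversion is bijective.

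First I would record two facts about the configuration. Since $f$ and $\bar f$ are distinct $2$-dimensional isotropic subspaces of $\mathbb{R}^{4,2}$ meeting along the line $\lspan{\mathfrak s}$, the sum $W:=f+\bar f$ is $3$-dimensional. Moreover, none of the cross pairs $\{r,\bar r\}$, $\{r,\bar t\}$, $\{t,\bar r\}$, $\{t,\bar t\}$ is in oriented contact: if, say, $\lspan{\mathfrak r,\bar{\mathfrak r}}=0$, then $\mathfrak s,\mathfrak r,\bar{\mathfrak r}$ would be pairwise orthogonal lightlike vectors and hence linearly dependent, because $\mathbb{R}^{4,2}$ has no $3$-dimensional isotropic subspace; since $\spann{\mathfrak s,\mathfrak r}=f$, this would give $\bar r\in f\cap\bar f=\lspan{\mathfrak s}$, i.e.\ $\bar r=s$, contradicting the hypothesis. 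In particular $\spann{\mathfrak r,\bar{\mathfrak r}}$ and $\spann{\mathfrak t,\bar{\mathfrak t}}$ are hyperbolic planes, whose only null lines are $\lspan{\mathfrak r},\lspan{\bar{\mathfrak r}}$, respectively $\lspan{\mathfrak t},\lspan{\bar{\mathfrak t}}$.

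Next I would pin down the candidate. By \eqref{equ_lie_inversion}, for any linear sphere complex $\mathfrak a$ the vector $\sigma_{\mathfrak a}(\mathfrak r)-\mathfrak r$ is a scalar multiple of $\mathfrak a$; if $\sigma_{\mathfrak a}(r)=\bar r$, then this multiple is nonzero, because $\sigma_{\mathfrak a}(\mathfrak r)=\mathfrak r$ would force $r=\bar r\in f\cap\bar f$, i.e.\ $r=s$. Hence $\mathfrak a\in U:=\spann{\mathfrak r,\bar{\mathfrak r}}$, and likewise $\sigma_{\mathfrak a}(t)=\bar t$ forces $\mathfrak a\in V:=\spann{\mathfrak t,\bar{\mathfrak t}}$. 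Now $U$ and $V$ are $2$-planes inside the $3$-space $W$, and they are distinct: as $\bar r\notin f$ we have $U\cap f=\lspan{\mathfrak r}$, so $U=V$ would give $\mathfrak t\in U\cap f=\lspan{\mathfrak r}$, i.e.\ $r=t$. Therefore $U\cap V$ is a single line $\lspan{\mathfrak a}$, and since the Lie inversion $\sigma_{\mathfrak a}$ depends only on this line, this already shows that at most one Lie inversion can satisfy the requirements.

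Finally I would check that $\mathfrak a$ really works. If $\mathfrak a$ were lightlike, it would lie on one of the two null lines of $U$; but $\mathfrak a\in\lspan{\mathfrak r}$ gives $\mathfrak r\in V\cap f=\lspan{\mathfrak t}$, hence $r=t$, and $\mathfrak a\in\lspan{\bar{\mathfrak r}}$ gives $\bar{\mathfrak r}\in V\cap\bar f=\lspan{\bar{\mathfrak t}}$, hence $\bar r=\bar t$ --- both are excluded, so $\lspan{\mathfrak a,\mathfrak a}\neq 0$ and $\sigma_{\mathfrak a}$ is a genuine Lie inversion. Writing $\mathfrak a=\lambda\mathfrak r+\mu\bar{\mathfrak r}$, necessarily with $\lambda,\mu\neq 0$ (otherwise $\mathfrak a$ would be lightlike), and using $\lspan{\mathfrak r,\mathfrak r}=\lspan{\bar{\mathfrak r},\bar{\mathfrak r}}=0$, a short computation gives $\lspan{\mathfrak a,\mathfrak a}=2\lambda\mu\lspan{\mathfrak r,\bar{\mathfrak r}}$ and $\lspan{\mathfrak r,\mathfrak a}=\mu\lspan{\mathfrak r,\bar{\mathfrak r}}$, so that $\sigma_{\mathfrak a}(\mathfrak r)=\mathfrak r-\tfrac{1}{\lambda}\mathfrak a=-\tfrac{\mu}{\lambda}\bar{\mathfrak r}$; expressing $\mathfrak a$ in the basis $\mathfrak t,\bar{\mathfrak t}$ of $V$ instead yields $\sigma_{\mathfrak a}(\mathfrak t)\in\lspan{\bar{\mathfrak t}}$ in exactly the same way. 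Thus $\sigma_{\mathfrak a}$ sends $r\mapsto\bar r$ and $t\mapsto\bar t$, and after rescaling the homogeneous coordinates of $\bar r$ and $\bar t$ it satisfies $\sigma(\mathfrak r)=\bar{\mathfrak r}$ and $\sigma(\mathfrak t)=\bar{\mathfrak t}$. I expect the only genuine obstacle in this argument to be the non-degeneracy $\lspan{\mathfrak a,\mathfrak a}\neq 0$ of the candidate complex; everything there rests on the ``no cross pair in oriented contact'' observation, which is where the Witt index $2$ of $\mathbb{R}^{4,2}$ enters.
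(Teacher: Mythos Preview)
Your proof is correct and takes essentially the same approach as the paper: both identify the defining complex as the intersection of the planes $\spann{r,\bar r}$ and $\spann{t,\bar t}$ inside the $3$-space $f+\bar f$. The paper finds this intersection point more explicitly by writing $\mathfrak s=\lambda\mathfrak r+\mu\mathfrak t=\bar\lambda\bar{\mathfrak r}+\bar\mu\bar{\mathfrak t}$ and setting $\mathfrak a:=\lambda\mathfrak r-\bar\lambda\bar{\mathfrak r}=\bar\mu\bar{\mathfrak t}-\mu\mathfrak t$, but it omits the non-degeneracy check $\lspan{\mathfrak a,\mathfrak a}\neq 0$ that you carefully supply.
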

\begin{proof}
By assumption, there exist constants $\lambda, \mu, \bar{\lambda}, \bar{\mu} \in \mathbb{R}^\times$ such that
\begin{equation*}
\mathfrak{s}= \lambda \mathfrak{r} + \mu \mathfrak{t} \ \ \text{and} \ \ \mathfrak{s}= \bar{\lambda} \bar{\mathfrak{r}} + \bar{\mu} \bar{\mathfrak{t}}.
\end{equation*}
Then, the Lie inversion with respect to the linear sphere complex $a \in \mathbb{P}(\mathbb{R}^{4,2})$ determined by
\begin{equation*}
\mathfrak{a}:=\lambda \mathfrak{r} - \bar{\lambda} \bar{\mathfrak{r}} = \bar{\mu} \bar{\mathfrak{t}} - \mu \mathfrak{t}
\end{equation*}
provides the sought-after map.
Moreover, $a$ is the intersection of the lines $\spann{r,\bar{r}}$ and $\spann{t,\bar{t}}$ and hence unique.
\end{proof}

\noindent For later reference, we remark the following property for the composition of two Lie inversions that follows from straightforward computations: 

\begin{lem}\label{lem_inversions_commute}
Two Lie inversions commute, $\sigma_{\mathfrak{a}} \circ \sigma_{\mathfrak{b}}=\sigma_{\mathfrak{b}} \circ \sigma_{\mathfrak{a}}$, if and only if the corresponding linear sphere complexes are involutive, that is, $\lspan{\mathfrak{a},\mathfrak{b}}=0$. 
\end{lem}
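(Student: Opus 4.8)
The plan is to prove both implications at once by computing the two compositions directly from the explicit formula~(\ref{equ_lie_inversion}) and comparing them. Fix homogeneous coordinates $\mathfrak{a},\mathfrak{b}$ and abbreviate $\alpha:=\lspan{\mathfrak{a},\mathfrak{a}}$, $\beta:=\lspan{\mathfrak{b},\mathfrak{b}}$ and $\gamma:=\lspan{\mathfrak{a},\mathfrak{b}}$; since $\sigma_{\mathfrak{a}}$ depends only on the projective class $\lspan{\mathfrak{a}}$, a common rescaling of $\mathfrak{a},\mathfrak{b}$ is harmless in the comparison. First I would substitute $\sigma_{\mathfrak{b}}(\mathfrak{r})=\mathfrak{r}-\tfrac{2}{\beta}\lspan{\mathfrak{r},\mathfrak{b}}\,\mathfrak{b}$ into the formula for $\sigma_{\mathfrak{a}}$, using bilinearity to expand $\lspan{\sigma_{\mathfrak{b}}(\mathfrak{r}),\mathfrak{a}}=\lspan{\mathfrak{r},\mathfrak{a}}-\tfrac{2\gamma}{\beta}\lspan{\mathfrak{r},\mathfrak{b}}$. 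This gives
\[
\sigma_{\mathfrak{a}}\!\circ\!\sigma_{\mathfrak{b}}(\mathfrak{r})=\mathfrak{r}-\frac{2\lspan{\mathfrak{r},\mathfrak{a}}}{\alpha}\,\mathfrak{a}-\frac{2\lspan{\mathfrak{r},\mathfrak{b}}}{\beta}\,\mathfrak{b}+\frac{4\gamma\lspan{\mathfrak{r},\mathfrak{b}}}{\alpha\beta}\,\mathfrak{a},
\]
and the expression for $\sigma_{\mathfrak{b}}\!\circ\!\sigma_{\mathfrak{a}}(\mathfrak{r})$ follows by interchanging $\mathfrak{a}\leftrightarrow\mathfrak{b}$. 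Subtracting the two yields the single identity
\[
\bigl(\sigma_{\mathfrak{a}}\!\circ\!\sigma_{\mathfrak{b}}-\sigma_{\mathfrak{b}}\!\circ\!\sigma_{\mathfrak{a}}\bigr)(\mathfrak{r})=\frac{4\gamma}{\alpha\beta}\bigl(\lspan{\mathfrak{r},\mathfrak{b}}\,\mathfrak{a}-\lspan{\mathfrak{r},\mathfrak{a}}\,\mathfrak{b}\bigr),
\]
from which both directions are read off immediately.

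For the implication "$\gamma=0\Rightarrow$ the inversions commute" the right-hand side vanishes identically. For the converse, assume the two inversions commute; then the right-hand side is zero for every $\mathfrak{r}\in\mathbb{R}^{4,2}$. Two distinct Lie inversions have linearly independent direction vectors $\mathfrak{a},\mathfrak{b}$ (the $-1$-eigenline of $\sigma_{\mathfrak{a}}$ is exactly $\lspan{\mathfrak{a}}$), so $\lspan{\mathfrak{r},\mathfrak{b}}\,\mathfrak{a}-\lspan{\mathfrak{r},\mathfrak{a}}\,\mathfrak{b}$ is nonzero as soon as $\lspan{\mathfrak{r},\mathfrak{b}}\neq 0$; nondegeneracy of the inner product guarantees such an $\mathfrak{r}$, whence $\gamma=0$.

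The computation is entirely routine; the only thing that needs a word of care is the degenerate situation $\lspan{\mathfrak{a}}=\lspan{\mathfrak{b}}$, in which $\sigma_{\mathfrak{a}}=\sigma_{\mathfrak{b}}$ commute trivially while $\gamma\neq 0$ — so the statement is to be understood for two genuinely distinct Lie inversions, which is the only case in which it is invoked later. I do not expect any real obstacle beyond keeping track of the coefficients when expanding the nested inversion, and the main point is simply to package the outcome in the symmetric form displayed above.
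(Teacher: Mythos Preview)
Your computation is correct and is exactly the ``straightforward computation'' the paper alludes to without writing out; the paper gives no further proof beyond that remark. Your observation about the degenerate case $\lspan{\mathfrak{a}}=\lspan{\mathfrak{b}}$ is a fair caveat, and reading ``two Lie inversions'' as two distinct ones (as is the case in every application in the paper) resolves it.
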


\subsection{Cross-ratio of four spheres}
Similar to the cross-ratio of four concircular points, one defines the cross-ratio of four spheres in a linear system. We recall this definition and different ways to compute it.

\begin{defi}
Let $r_1, r_2, r_3, r_4 \in \mathbb{P}(\mathcal{L})$ be four spheres in a common contact element, then the \emph{cross-ratio} is defined by
\begin{equation*}
cr(\mathfrak{r}_1,\mathfrak{r}_2,\mathfrak{r}_3,\mathfrak{r}_4)=\frac{\alpha \bar{\beta}}{\beta \bar{\alpha}}, \ \text{ where} \ \mathfrak{r}_3=\alpha \mathfrak{r}_1 + \beta \mathfrak{r}_2 \text{ and } \mathfrak{r}_4= \bar{\alpha} \mathfrak{r}_1 + \bar{\beta} \mathfrak{r}_2.
\end{equation*}
\end{defi}
\noindent It is independent of the choice of homogeneous coordinates  and can be equivalently described by the four radii of the spheres
\begin{equation*}
cr(\mathfrak{r}_1,\mathfrak{r}_2,\mathfrak{r}_3,\mathfrak{r}_4)=\frac{(R_1-R_4)(R_2-R_3)}{(R_1-R_3)(R_2-R_4)},
\end{equation*}
where $R_i$ denotes the radius of the sphere $r_i$ (for point spheres the radius is assumed to be $0$ and for a plane we define the radius as $\infty$).

Moreover, one can define the \emph{cross-ratio of four spheres in a linear system} by tracking it back to the cross-ratio of four spheres in a contact element: assume that $r_1, r_2, r_3$ and $r_4$ are four spheres in a linear system and choose an arbitrary contact element $f \ni r_1$. Then there exist three unique spheres $s_i$ in $f$ such that $s_i \perp r_i$ for $i=2,3,4$ and we define
\begin{align*}
cr(r_1, r_2, r_3, r_4)&:= cr(r_1, s_2, s_3, s_4).
\end{align*}

%

\medskip

\noindent In particular, the cross-ratio of four spheres that are pairwise related by a Lie inversion is given by the following formula (cf.\,\cite[\S 53]{blaschke}):
\begin{lem}\label{lem_cr_J}
Let $a \in \mathbb{P}(\mathbb{R}^{4,2}) \setminus \mathbb{P}(\mathcal{L})$ and $s_1, s_2 \in \mathbb{P}(\mathcal{L})$ be two spheres that do not lie in the linear sphere complex $a^\perp$, then
\begin{equation*}
cr(\mathfrak{s}_1, \mathfrak{s}_2, \sigma_{\mathfrak{a}}(\mathfrak{s}_2) ,\sigma_{\mathfrak{a}}(\mathfrak{s}_1))=\frac{\phantom{2}\langle \mathfrak{s}_1, \mathfrak{s}_2 \rangle \langle \mathfrak{a}, \mathfrak{a} \rangle}{2\langle \mathfrak{s}_1, \mathfrak{a} \rangle \langle \mathfrak{s}_2, \mathfrak{a} \rangle}.
\end{equation*}
\end{lem}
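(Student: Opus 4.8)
The plan is to evaluate the cross-ratio directly from its definition for four spheres in a linear system, which reduces the whole statement to bilinear-form bookkeeping. The first step is to locate the ambient linear system: by Lemma~\ref{lem_inversion_lin_system} the four spheres lie in the linear system carried by $\spann{\mathfrak{s}_1,\mathfrak{s}_2,\sigma_{\mathfrak{a}}(\mathfrak{s}_1)}=\spann{\mathfrak{s}_1,\mathfrak{s}_2,\mathfrak{a}}$, the last equality holding because $\mathfrak{a}$ is a nonzero multiple of $\mathfrak{s}_1-\sigma_{\mathfrak{a}}(\mathfrak{s}_1)$ (here $\langle\mathfrak{s}_1,\mathfrak{a}\rangle\neq0$ is used), so that $\sigma_{\mathfrak{a}}(\mathfrak{s}_2)$ too lies in this span. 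Next, fix any contact element $f=\spann{\mathfrak{s}_1,\mathfrak{w}}$ through $s_1$ -- that is, a lightlike $\mathfrak{w}\perp\mathfrak{s}_1$ not proportional to $\mathfrak{s}_1$ -- and abbreviate $p:=\langle\mathfrak{w},\mathfrak{s}_2\rangle$, $q:=\langle\mathfrak{w},\mathfrak{a}\rangle$.

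The central computation is to record, from (\ref{equ_lie_inversion}) and bilinearity, all pairings of the $\sigma_{\mathfrak{a}}$-images against $\mathfrak{s}_1,\mathfrak{s}_2,\mathfrak{w}$:
\begin{align*}
&\langle\mathfrak{s}_1,\sigma_{\mathfrak{a}}(\mathfrak{s}_1)\rangle=-\tfrac{2\langle\mathfrak{s}_1,\mathfrak{a}\rangle^{2}}{\langle\mathfrak{a},\mathfrak{a}\rangle},\qquad
\langle\mathfrak{s}_2,\sigma_{\mathfrak{a}}(\mathfrak{s}_2)\rangle=-\tfrac{2\langle\mathfrak{s}_2,\mathfrak{a}\rangle^{2}}{\langle\mathfrak{a},\mathfrak{a}\rangle},\\
&\langle\mathfrak{s}_1,\sigma_{\mathfrak{a}}(\mathfrak{s}_2)\rangle=\langle\mathfrak{s}_2,\sigma_{\mathfrak{a}}(\mathfrak{s}_1)\rangle=\langle\mathfrak{s}_1,\mathfrak{s}_2\rangle-\tfrac{2\langle\mathfrak{s}_1,\mathfrak{a}\rangle\langle\mathfrak{s}_2,\mathfrak{a}\rangle}{\langle\mathfrak{a},\mathfrak{a}\rangle},\\
&\langle\mathfrak{w},\sigma_{\mathfrak{a}}(\mathfrak{s}_1)\rangle=-\tfrac{2\langle\mathfrak{s}_1,\mathfrak{a}\rangle\,q}{\langle\mathfrak{a},\mathfrak{a}\rangle},\qquad
\langle\mathfrak{w},\sigma_{\mathfrak{a}}(\mathfrak{s}_2)\rangle=p-\tfrac{2\langle\mathfrak{s}_2,\mathfrak{a}\rangle\,q}{\langle\mathfrak{a},\mathfrak{a}\rangle}.
\end{align*}
Solving $\langle x\mathfrak{s}_1+y\mathfrak{w},\,\cdot\,\rangle=0$ in turn for $\mathfrak{s}_2$, for $\sigma_{\mathfrak{a}}(\mathfrak{s}_2)$ and for $\sigma_{\mathfrak{a}}(\mathfrak{s}_1)$ produces the three spheres of $f$ orthogonal to $s_2$, $\sigma_{\mathfrak{a}}(s_2)$, $\sigma_{\mathfrak{a}}(s_1)$ as explicit combinations of $\mathfrak{s}_1$ and $\mathfrak{w}$ whose coefficients are linear in $p$, $q$ and the pairings above.

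The last step is to feed $r_1=s_1$ together with these three spheres into the cross-ratio of four spheres in a linear system, via its contact-element reduction recalled just before the lemma: expand the two last spheres in the basis made of $\mathfrak{s}_1$ and the first of the three, form $\alpha\bar\beta/(\beta\bar\alpha)$, and simplify. A common factor of the shape $\langle\mathfrak{s}_1,\mathfrak{a}\rangle\,p-q\,\langle\mathfrak{s}_1,\mathfrak{s}_2\rangle$, together with the auxiliary quantities $p$ and $q$, cancels, and the asserted value of the cross-ratio remains; that $p,q$ must disappear is moreover forced a priori by the well-definedness of the linear-system cross-ratio and gives a useful check on the algebra. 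I do not anticipate a conceptual obstacle here -- the only real risk is the sheer number of bilinear terms and keeping the ordering of the four arguments, and the chosen sign convention for the cross-ratio, consistent with the statement. A variant that avoids choosing $f$ altogether is to use that the four spheres lie on the conic $\mathbb{P}(\mathcal{L})\cap\mathbb{P}(\spann{\mathfrak{s}_1,\mathfrak{s}_2,\mathfrak{a}})$ and that the cross-ratio of four points of a conic equals, up to sign, the square root of the corresponding ratio of pairings of representatives; here that ratio turns out to be a perfect square, and the branch is pinned down by specialising to the degenerate case $\mathfrak{a}\in\spann{\mathfrak{s}_1,\mathfrak{s}_2}$, in which $\sigma_{\mathfrak{a}}$ interchanges $s_1$ and $s_2$.
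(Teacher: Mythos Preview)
The paper does not actually prove this lemma: it is stated with the attribution ``cf.~\cite[\S 53]{blaschke}'' and no argument is given. So there is nothing to compare your proposal against on the level of strategy; you are supplying something the paper omits. Your plan --- compute all the pairings of $\sigma_{\mathfrak a}(\mathfrak s_1),\sigma_{\mathfrak a}(\mathfrak s_2)$ against $\mathfrak s_1,\mathfrak s_2,\mathfrak w$, solve for the three spheres in $f$ orthogonal to $s_2,\sigma_{\mathfrak a}(s_2),\sigma_{\mathfrak a}(s_1)$, and feed them into the contact-element cross-ratio --- is exactly the right mechanism, and you correctly anticipate that the auxiliary quantities $p,q$ and the factor $\langle\mathfrak s_1,\mathfrak a\rangle p-q\langle\mathfrak s_1,\mathfrak s_2\rangle$ will cancel.

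However, the risk you flag about ordering is not hypothetical. If one carries out your computation literally with the paper's own definition of the linear-system cross-ratio (project $r_2,r_3,r_4$ to the contact element through $r_1$, keep $r_1$, then apply the $\alpha\bar\beta/(\beta\bar\alpha)$ formula), one obtains
\[
cr(\mathfrak s_1,\mathfrak s_2,\sigma_{\mathfrak a}(\mathfrak s_2),\sigma_{\mathfrak a}(\mathfrak s_1))
=\frac{2\langle\mathfrak s_1,\mathfrak a\rangle\langle\mathfrak s_2,\mathfrak a\rangle}
{2\langle\mathfrak s_1,\mathfrak a\rangle\langle\mathfrak s_2,\mathfrak a\rangle-\langle\mathfrak s_1,\mathfrak s_2\rangle\langle\mathfrak a,\mathfrak a\rangle}
=\frac{1}{1-C},
\]
where $C$ is the value claimed in the lemma. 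A quick sanity check: for the four concircular point spheres $1,i,-1,-i$ (a ``square''), the paper's definition gives $cr=\tfrac12$, whereas the lemma's formula (equivalently $-\langle\mathfrak n^{(1)},\mathfrak n^{(1)}\rangle/\langle\mathfrak n^{(2)},\mathfrak n^{(2)}\rangle$ as used in (\ref{equ_cr})) gives $-1$, and indeed $\tfrac12=1/(1-(-1))$. So the stated formula and the paper's linear-system cross-ratio differ by a fixed argument permutation --- presumably a convention difference inherited from Blaschke. Your alternative conic approach runs into the same issue: the pairing ratio yields $cr^2=\langle\mathfrak s_1,\sigma_{\mathfrak a}(\mathfrak s_1)\rangle\langle\mathfrak s_2,\sigma_{\mathfrak a}(\mathfrak s_2)\rangle/\langle\mathfrak s_1,\sigma_{\mathfrak a}(\mathfrak s_2)\rangle^2=(1-C)^{-2}$, not $C^2$. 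In short, your method is sound, but be prepared either to reinterpret the order of arguments or to note the discrepancy explicitly.
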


\subsection{Discrete Legendre maps}
Discrete surfaces in this paper will be represented by discrete Legendre maps from a connected quadrilateral cell complex $\mathcal{G}=(\mathcal{V},\mathcal{E},\mathcal{F})$ of degree $4$ to the space of contact elements $\mathcal{Z}$. The set of vertices (0-cells), edges (1-cells) and faces (2-cells) of the cell complex $\mathcal{G}$ are denoted by $\mathcal{V}$, $\mathcal{E}$ and $\mathcal{F}$, respectively. 

The directions in the cell complex will be labelled by upper indices $(1)$ and $(2)$ and we obtain two distinguished sets of edges $\mathcal{E}^{(1)} \overset{.}{\cup} \mathcal{E}^{(2)}=\mathcal{E}$. A $(1)$- resp.\,$(2)$-coordinate ribbon is then the sequence of faces bounded by two adjacent $(1)$- resp.\,$(2)$-coordinate lines.

\medskip 

%
%
\begin{defi}[\cite{ddg_book, lin_weingarten_discrete}]
A discrete line congruence $f: \mathcal{V} \rightarrow \mathcal{Z}, i \mapsto f_i$, is a \emph{discrete Legendre map} if two adjacent contact elements $f_i$ and $f_j$ share a common \emph{curvature sphere} $s_{ij}:= f_i \cap f_j$.
\end{defi}
\noindent Note that, for any discrete Legendre map, we therefore obtain two \emph{curvature sphere congruences} $s^{(1)}:\mathcal{E}^{(1)} \rightarrow \mathbb{P}(\mathcal{L})$ and $s^{(2)}:\mathcal{E}^{(2)} \rightarrow \mathbb{P}(\mathcal{L})$.

Moreover, for any fixed point sphere complex $\mathfrak{p} \in \mathbb{R}^{4,2}$, $\lspan{\mathfrak{p}, \mathfrak{p}}=-1$, the point sphere congruence $p_i:= f_i \cap p^\perp$ of a discrete Legendre map provides a circular net, that is, any four point spheres of an elementary quadrilateral are concircular. 
%
%

\section{Discrete R-congruences}\label{section_r_congruences}

\noindent Classically, a smooth sphere congruence is called \emph{Ribaucour} if the curvature lines of its two envelopes correspond. Representing discrete surfaces by discrete Legendre maps, the analogous discrete problem, was studied in \cite{org_principles}: discrete sphere congruences enveloped by at least two generic discrete Legendre maps are given by \emph{$Q$-nets} in the Lie quadric, i.\,e.\,, sphere congruences $q:\mathcal{V} \rightarrow \mathbb{P}(\mathcal{L})$ such that the four spheres of any elementary quadrilateral are coplanar. 

In this paper, we will exclude degenerate faces of a $Q$-net and will assume that the spheres of a quadrilateral are not in oriented contact: 

\begin{defi}
A map $r:\mathcal{V} \rightarrow \mathbb{P}(\mathcal{L})$ is called a \emph{discrete R-congruence} if the four spheres of  any elementary quadrilateral lie in a unique linear system.
\end{defi}

\noindent As pointed out in \cite[\S 5]{org_principles}, discrete R-congruences with spheres lying in linear systems of signature $(2,1)$ provide the natural discrete counterparts to smooth Ribaucour sphere  congruences. Furthermore, since in this case any four spheres of a quadrilateral are curvature spheres of a Dupin cyclide, these $Q$-nets often allow for elegant and straightforward geometric interpretations. Thus, in this work,  we will often focus on these \emph{discrete $(2,1)$-R-congruences}.

\begin{remark*}
In the smooth case, Blaschke gave the following characterization of smooth Ribaucour sphere congruences \cite[\S 77]{blaschke}: a smooth sphere congruence $r:U \rightarrow \mathbb{P}(\mathcal{L})$ is a Ribaucour sphere congruence if and only if for any choice of coordinates $(u,v)$ there exists a map of \emph{osculating complexes}~$t:~U \rightarrow~\mathbb{P}(\mathbb{R}^{4,2})$ such that
\begin{equation*}
\spann{\mathfrak{r}, \ \partial_u \mathfrak{r}, \ \partial_v \mathfrak{r}, \ \partial_{uu} \mathfrak{r}, \ \partial_{vv} \mathfrak{r}, \ \partial_{uv} \mathfrak{r}} \subset  \mathfrak{t}^\perp.
\end{equation*} 
Similar osculating complexes also exist at vertices of a discrete R-congruence: by definition, the nine R-spheres of the four adjacent quadrilateral lie in a common linear sphere complex. This complex is spanned by the vertex sphere and its four neighbours.
\end{remark*}

\noindent Any face of a discrete R-congruence induces two special Lie inversions that will turn out to be crucial in the construction of its envelopes (see Fig.~\ref{fig:schematic_quad} \textit{right}):

\begin{prop}\label{prop_inversions_for_r}
A discrete R-congruence induces a unique map of linear sphere complexes determined by  
\begin{equation*}
(\one{n}{},\two{n}{}):\mathcal{F} \rightarrow \mathbb{P}(\mathbb{R}^{4,2})\setminus \mathbb{P}(\mathcal{L}) \times \mathbb{P}(\mathbb{R}^{4,2})\setminus \mathbb{P}(\mathcal{L})
\end{equation*}
such that for any quadrilateral $(ijkl)$ the corresponding Lie inversions $\sigma_{\mathfrak{n}^{(1)}}$ and $\sigma_{\mathfrak{n}^{(2)}}$ satisfy
\begin{equation}
\begin{aligned}\label{eq:prop_n}
&\sigma_{\mathfrak{n}^{(1)}}(\mathfrak{r}_i)=\mathfrak{r}_j, \ \ \sigma_{\mathfrak{n}^{(1)}}(\mathfrak{r}_l)=\mathfrak{r}_k \ \ \text{ and }
\\&\sigma_{\mathfrak{n}^{(2)}}(\mathfrak{r}_i)=\mathfrak{r}_l, \ \ \sigma_{\mathfrak{n}^{(2)}}(\mathfrak{r}_j)=\mathfrak{r}_k;
\end{aligned}
\end{equation}
these induced Lie inversions will be denoted by 
\begin{equation*}
\one{\sigma}:=\sigma_{\mathfrak{n}^{(1)}} \ \ \text{and } \ \ \two{\sigma}:=\sigma_{\mathfrak{n}^{(2)}}.
\end{equation*}
\end{prop}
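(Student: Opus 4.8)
The plan is to construct, for a single quadrilateral $(ijkl)$ of the discrete R-congruence, the two Lie inversions directly and then check that the faceswise construction assembles into a well-defined map on $\mathcal{F}$. The key point is that the four spheres $r_i,r_j,r_k,r_l$ lie in a unique linear system, hence in a common plane $\pi:=\spann{\mathfrak{r}_i,\mathfrak{r}_j,\mathfrak{r}_k}$ in $\mathbb{P}(\mathbb{R}^{4,2})$, and no two of them are in oriented contact. First I would use the planarity to write $\mathfrak{r}_k$ as a linear combination of $\mathfrak{r}_i,\mathfrak{r}_j,\mathfrak{r}_l$; more usefully, I would observe that the three ``diagonal'' lines $\spann{r_i,r_j}$, $\spann{r_l,r_k}$ (the two $(1)$-edges of the combinatorial square) both lie in $\pi$ and are distinct, so they meet in a unique point $\one{n}{}\in\pi$. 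I claim the Lie inversion $\sigma_{\mathfrak{n}^{(1)}}$ does the job for the first line of \eqref{eq:prop_n}. To see this I would invoke Lemma~\ref{lem_unique_inversion_four_spheres}: the contact elements needed there are $f:=\spann{r_i,r_l}$ and $\bar f:=\spann{r_j,r_k}$ — but wait, these need a common sphere. Instead the cleaner route is: since $\one{n}{}$ lies on both $\spann{r_i,r_j}$ and $\spann{r_l,r_k}$, the Lie inversion $\sigma_{\mathfrak{n}^{(1)}}$ fixes $\one{n}{}$ and maps each of these lines to itself; restricted to the light cone, an involution of a line through a non-lightlike point either fixes both lightlike points or swaps them, and one checks (using that $r_i\ne r_j$ are not in oriented contact, so $\langle\mathfrak r_i,\mathfrak r_j\rangle\ne 0$, hence $\one{n}{}\notin\mathbb{P}(\mathcal L)$, and that $\one{n}{}$ separates $r_i$ from $r_j$ harmonically in the appropriate sense) that the swap occurs. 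So $\sigma_{\mathfrak{n}^{(1)}}(\mathfrak r_i)=\mathfrak r_j$ up to scale, and likewise $\sigma_{\mathfrak{n}^{(1)}}(\mathfrak r_l)=\mathfrak r_k$ up to scale.

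The scale issue is where I expect to need a genuine argument, because \eqref{eq:prop_n} is stated for homogeneous coordinates, so the claim is that the SAME representative $\mathfrak{n}^{(1)}$ realizes both equalities on the nose. Here I would proceed in the converse direction, following the construction in the proof of Lemma~\ref{lem_unique_inversion_four_spheres}: fix any homogeneous coordinates $\mathfrak r_i,\mathfrak r_j,\mathfrak r_k,\mathfrak r_l$; planarity gives a relation, and in particular there are scalars with $\mathfrak r_k = \alpha\mathfrak r_i + \beta\mathfrak r_j + \gamma\mathfrak r_l$. I want to rescale $\mathfrak r_l$ (which I am free to do) so that the vector $\mathfrak{n}^{(1)} := \mathfrak r_i - \mathfrak r_j$ simultaneously equals $\mathfrak r_k - \mathfrak r_l$ up to sign; equivalently I need $\mathfrak r_i - \mathfrak r_j$ and $\mathfrak r_k - \mathfrak r_l$ to be proportional, which holds iff $\spann{r_i,r_j}=\spann{r_k,r_l}$ as lines — but that is not generally true! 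The correct statement is that $\spann{r_i,r_j}$ and $\spann{r_k,r_l}$ meet in the single point $\one{n}{}$, and $\mathfrak{n}^{(1)}$ is a representative of that point, so I should instead write $\mathfrak n^{(1)} = \mathfrak r_i - \lambda\mathfrak r_j$ for the unique $\lambda$ making this lie on $\spann{r_k,r_l}$ too; then by Lemma~\ref{lem_inversion_family}(i) applied twice, $\sigma_{\mathfrak n^{(1)}}$ sends $r_i\mapsto r_j$ and $r_k\mapsto r_l$ as POINTS, and finally I rescale the homogeneous representatives $\mathfrak r_j$ and $\mathfrak r_l$ (the ``far'' ends of the two edges) to absorb the scalars coming out of \eqref{equ_lie_inversion}, making \eqref{eq:prop_n} hold literally. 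The symmetric construction with the other pair of edges gives $\two{n}{}$ as the intersection of $\spann{r_i,r_l}$ and $\spann{r_j,r_k}$.

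Next I would address non-degeneracy: I must show $\one{n}{},\two{n}{}\notin\mathbb{P}(\mathcal L)$, i.e.\ the intersection points are not lightlike. This follows from Lemma~\ref{lem_inversion_family}: the complex $\mathfrak n_\lambda = \mathfrak r - \lambda\bar{\mathfrak r}$ there is allowed precisely when $r,\bar r$ are not in oriented contact, which is exactly our standing hypothesis on R-congruences (spheres of a quadrilateral pairwise not in oriented contact); and $\langle\mathfrak n_\lambda,\mathfrak n_\lambda\rangle = -2\lambda\langle\mathfrak r,\bar{\mathfrak r}\rangle\ne 0$. One must also confirm the two intersection points actually exist, i.e.\ that the relevant pairs of lines are coplanar and distinct — coplanarity is the defining property of the linear system, and distinctness follows because the four spheres are pairwise distinct (if, say, $\spann{r_i,r_j}=\spann{r_i,r_l}$ then $r_j,r_l\in\spann{r_i,\cdot}$ and, combined with not being in oriented contact, one gets a contradiction with the ``unique linear system'' / non-degeneracy assumption). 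Finally, uniqueness of the map $(\one{n}{},\two{n}{})$ is immediate: a Lie inversion $\sigma_{\mathfrak a}$ satisfying $\sigma_{\mathfrak a}(\mathfrak r_i)=\mathfrak r_j$ has $\mathfrak a \in \spann{r_i,r_j}$ (from \eqref{equ_lie_inversion}, $\mathfrak r_j - \mathfrak r_i$ is a multiple of $\mathfrak a$), and similarly $\mathfrak a\in\spann{r_l,r_k}$; since these lines meet in one point, $\mathfrak a=\one{n}{}$, and the requirement that \eqref{eq:prop_n} hold for fixed homogeneous coordinates pins down the representative $\mathfrak{n}^{(1)}$ up to the harmless freedom already discussed; the same for $\two{n}{}$. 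The main obstacle, as indicated, is the bookkeeping of homogeneous representatives so that \eqref{eq:prop_n} holds as vector equations rather than merely projectively — everything else is a direct application of Lemmas~\ref{lem_inversion_family} and~\ref{lem_unique_inversion_four_spheres} together with the definition of a discrete R-congruence.
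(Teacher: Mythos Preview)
Your proposal is correct, and your geometric identification of $\one{n}$ as the intersection of the lines $\spann{r_i,r_j}$ and $\spann{r_l,r_k}$ (and symmetrically for $\two{n}$) is exactly what the paper's figures depict; your non-degeneracy and uniqueness arguments are also fine. However, the paper's proof is much shorter because it resolves the ``scale bookkeeping'' that you struggle with in one stroke: since the four spheres span a projective plane and any three are linearly independent, the unique linear relation among them has all four coefficients nonzero, so one may rescale the homogeneous coordinates once and for all to read
\[
\mathfrak{r}_i - \mathfrak{r}_j + \mathfrak{r}_k - \mathfrak{r}_l = 0.
\]
Then $\one{\mathfrak{n}} := \mathfrak{r}_i - \mathfrak{r}_j = \mathfrak{r}_l - \mathfrak{r}_k$ and $\two{\mathfrak{n}} := \mathfrak{r}_i - \mathfrak{r}_l = \mathfrak{r}_j - \mathfrak{r}_k$, and Lemma~\ref{lem_inversion_family}(i) gives \eqref{eq:prop_n} immediately. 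This is precisely the step you talked yourself out of: you wrote that ``$\mathfrak{r}_i - \mathfrak{r}_j$ and $\mathfrak{r}_k - \mathfrak{r}_l$ proportional \dots\ holds iff $\spann{r_i,r_j}=\spann{r_k,r_l}$ as lines --- but that is not generally true!'' That inference is wrong. For \emph{suitably chosen} representatives the two vectors are proportional (indeed equal up to sign), and this encodes not that the two lines coincide but merely that they meet --- the common value \emph{is} the intersection point you were after. Your route, via explicit intersection points and then rescaling the ``far'' vertices, works too; it just reverses the order of operations and makes the argument longer. The paper's normalization also pays off immediately afterwards: the involutivity $\langle\one{\mathfrak{n}},\two{\mathfrak{n}}\rangle=0$ of Lemma~\ref{lem_involutive} becomes a two-line computation in these coordinates.
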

\begin{proof}
  Let $r_i, r_j, r_k$ and $r_l$ be four spheres of an elementary quadrilateral of a discrete R-congruence (see Fig.~\ref{fig:schematic_quad}). Since the four spheres lie in a linear system and are therefore linearly dependent, we can choose homogeneous coordinates such that
\begin{equation}\label{choice_coord_spheres}
\mathfrak{r}_i - \mathfrak{r}_j + \mathfrak{r}_k - \mathfrak{r}_l =0.
\end{equation} 
Then, by Lemma \ref{lem_inversion_family} (i), the vectors 
\begin{equation}\label{equ_coord_compl}
\one{\mathfrak{n}}:= \mathfrak{r}_i-\mathfrak{r}_j=\mathfrak{r}_l-\mathfrak{r}_k \ \text{ and } \ \two{\mathfrak{n}}:=\mathfrak{r}_i - \mathfrak{r}_l = \mathfrak{r}_j-\mathfrak{r}_k
\end{equation}
define two linear sphere complexes satisfying~Equation \eqref{eq:prop_n}.
\end{proof}

\begin{figure}[t]
  \begin{tikzpicture}[scale=1.4]
\def\radius{1}
\draw (-\radius,-\radius) -- (\radius,-\radius) -- (\radius,\radius) -- (-\radius,\radius) -- (-\radius,-\radius);
\draw [dashed] (-1.5*\radius, 0) -- (1.5*\radius,0);
\draw [dashed] (0, -1.5*\radius) -- (0, 1.5*\radius);
\node [anchor=north] at (0, -1.5) {$\sigma^{(1)}$};
\node [anchor=east] at (-1.5, 0) {$\sigma^{(2)}$};
\node [circle, draw, fill=white] at (-\radius,-\radius) {$r_i$};
\node [circle, draw, fill=white] at (\radius,-\radius) {$r_j$};
\node [circle, draw, fill=white] at (\radius,\radius) {$r_k$};
\node [circle, draw, fill=white] at (-\radius,\radius) {$r_l$};
\end{tikzpicture}
  \hspace{0.5cm}
  \begin{overpic}[width=6.6cm]{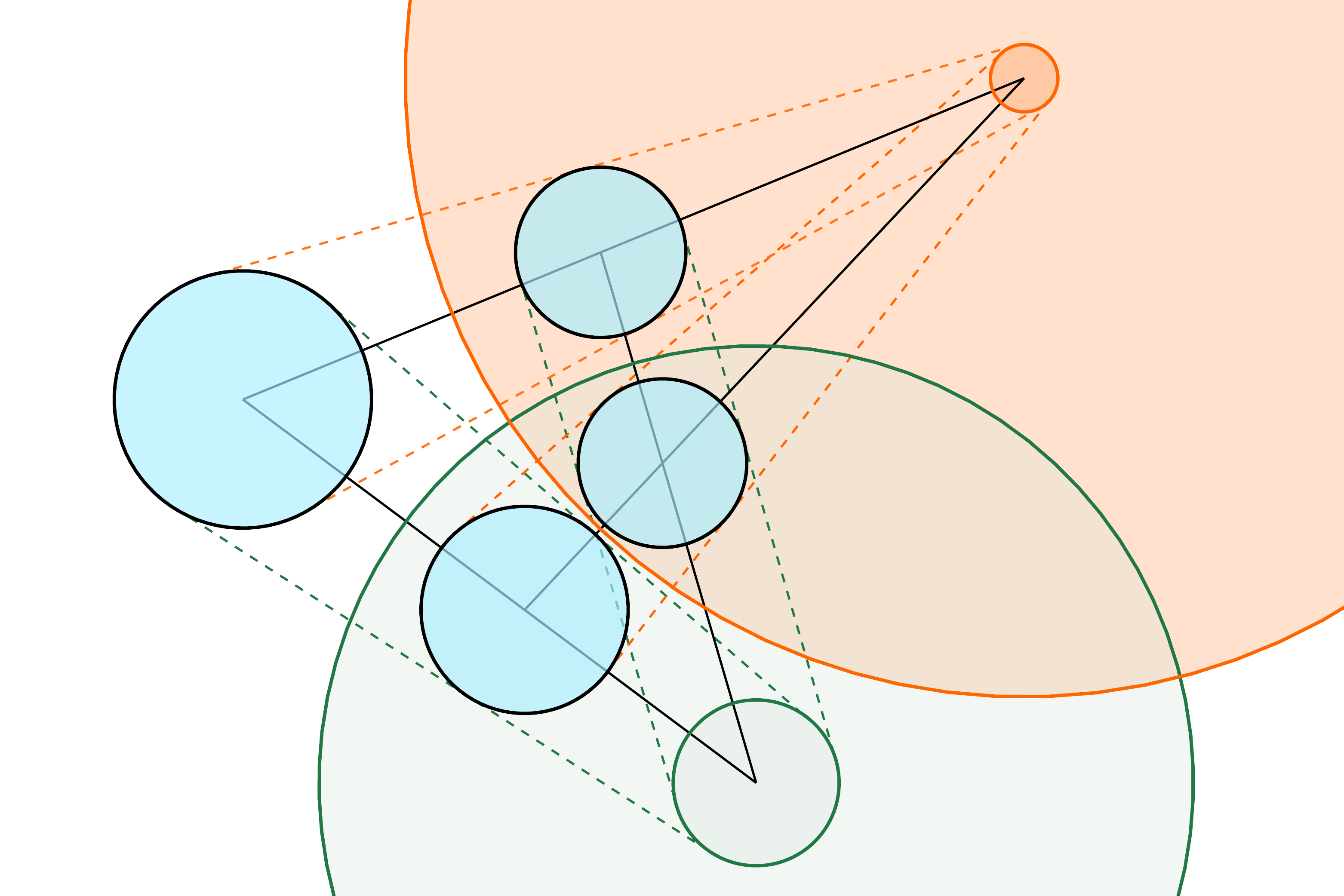}
    \put(13,34){$r_l$}
    \put(35,17){$r_i$}
    \put(48,31){$r_j$}
    \put(43,49){$r_k$}
    \put(80,55){$n^{(1)}$}
    \put(65,5){$n^{(2)}$}
  \end{overpic}
  \caption{\emph{Left}: A schematic picture of the two Lie inversions $\sigma^{(1)}$ and $\sigma^{(2)}$ of an R-congruence quadrilateral $(r_i, r_j, r_k, r_l)$.  \emph{Right}: A plane section through an R-congruence quadrilateral showing the two elliptic complexes $n^{(1)}$ and $n^{(2)}$ of the inversions $\sigma^{(1)}$ and $\sigma^{(2)}$.
  }
  \label{fig:schematic_quad}
\end{figure}

The choice of homogeneous coordinates in Equation \eqref{choice_coord_spheres} is local and depends on the quadrilateral under consideration. However, for special discrete R-congruences there exist global homogeneous coordinates inducing the Lie inversions $\one{\sigma}$ and $\two{\sigma}$ (see Subsection \ref{section_iso}).

\begin{lem}\label{lem_involutive}
For any quadrilateral, the two linear sphere complexes determined by $n^{(1)}$ and $n^{(2)}$ are involutive.
\end{lem}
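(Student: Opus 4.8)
The plan is to compute $\lspan{\one{\mathfrak{n}}, \two{\mathfrak{n}}}$ directly using the homogeneous coordinates fixed in the proof of Proposition~\ref{prop_inversions_for_r}, and to show it vanishes as a consequence of the single normalization \eqref{choice_coord_spheres} together with the fact that the four spheres lie on the light cone. By Lemma~\ref{lem_inversions_commute}, the two Lie inversions commute precisely when the complexes are involutive, so this computation is exactly what is needed.

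First I would recall that with the coordinates of \eqref{choice_coord_spheres} we have $\one{\mathfrak{n}} = \mathfrak{r}_i - \mathfrak{r}_j = \mathfrak{r}_l - \mathfrak{r}_k$ and $\two{\mathfrak{n}} = \mathfrak{r}_i - \mathfrak{r}_l = \mathfrak{r}_j - \mathfrak{r}_k$. To exploit the symmetric roles of the four vertices, I would expand $\lspan{\one{\mathfrak{n}}, \two{\mathfrak{n}}}$ using the two different representations of each vector — e.g.\ pair $\mathfrak{r}_i - \mathfrak{r}_j$ against $\mathfrak{r}_i - \mathfrak{r}_l$ in one grouping and $\mathfrak{r}_l - \mathfrak{r}_k$ against $\mathfrak{r}_j - \mathfrak{r}_k$ in another — so that after adding the two expressions and dividing by two, the "diagonal" terms $\lspan{\mathfrak{r}_i,\mathfrak{r}_i}$, $\lspan{\mathfrak{r}_j,\mathfrak{r}_j}$, $\lspan{\mathfrak{r}_k,\mathfrak{r}_k}$, $\lspan{\mathfrak{r}_l,\mathfrak{r}_l}$ all drop out because each $\mathfrak{r}_m$ is lightlike. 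What should remain is a combination of the six mixed inner products $\lspan{\mathfrak{r}_i,\mathfrak{r}_j}$, $\lspan{\mathfrak{r}_j,\mathfrak{r}_k}$, etc., and I expect the coefficients to cancel in pairs, leaving $0$.

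An alternative, perhaps cleaner route avoiding bookkeeping: observe from \eqref{equ_coord_compl} that $\one{\mathfrak{n}} + \two{\mathfrak{n}} = \mathfrak{r}_i - \mathfrak{r}_l + \mathfrak{r}_l - \mathfrak{r}_k \cdot(\ldots)$ — more usefully, $\one{\mathfrak{n}} - \two{\mathfrak{n}} = (\mathfrak{r}_i - \mathfrak{r}_j) - (\mathfrak{r}_i - \mathfrak{r}_l) = \mathfrak{r}_l - \mathfrak{r}_j$ and $\one{\mathfrak{n}} + \two{\mathfrak{n}} = \mathfrak{r}_i - \mathfrak{r}_k$ (using \eqref{choice_coord_spheres} in the second, since $\mathfrak{r}_i - \mathfrak{r}_j + \mathfrak{r}_i - \mathfrak{r}_l = 2\mathfrak{r}_i - \mathfrak{r}_j - \mathfrak{r}_l = \mathfrak{r}_i - \mathfrak{r}_k$). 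Then $4\lspan{\one{\mathfrak{n}}, \two{\mathfrak{n}}} = \lspan{\one{\mathfrak{n}} + \two{\mathfrak{n}}, \one{\mathfrak{n}} + \two{\mathfrak{n}}} - \lspan{\one{\mathfrak{n}} - \two{\mathfrak{n}}, \one{\mathfrak{n}} - \two{\mathfrak{n}}} = \lspan{\mathfrak{r}_i - \mathfrak{r}_k, \mathfrak{r}_i - \mathfrak{r}_k} - \lspan{\mathfrak{r}_l - \mathfrak{r}_j, \mathfrak{r}_l - \mathfrak{r}_j} = -2\lspan{\mathfrak{r}_i,\mathfrak{r}_k} + 2\lspan{\mathfrak{r}_l,\mathfrak{r}_j}$, where the light-cone condition kills the diagonal terms. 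So the claim reduces to showing $\lspan{\mathfrak{r}_i, \mathfrak{r}_k} = \lspan{\mathfrak{r}_j, \mathfrak{r}_l}$, i.e.\ that the two "diagonal" pairings of the quadrilateral agree in these normalized coordinates.

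The main obstacle is thus this last identity, and I expect it to follow again from \eqref{choice_coord_spheres}: taking the inner product of $\mathfrak{r}_i - \mathfrak{r}_j + \mathfrak{r}_k - \mathfrak{r}_l = 0$ with itself, and using that all four $\mathfrak{r}_m$ are lightlike, gives $-\lspan{\mathfrak{r}_i,\mathfrak{r}_j} + \lspan{\mathfrak{r}_i,\mathfrak{r}_k} - \lspan{\mathfrak{r}_i,\mathfrak{r}_l} - \lspan{\mathfrak{r}_j,\mathfrak{r}_k} + \lspan{\mathfrak{r}_j,\mathfrak{r}_l} - \lspan{\mathfrak{r}_k,\mathfrak{r}_l} = 0$ — which is one linear relation among the six mixed products but not immediately the desired one. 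To pin it down I would instead pair \eqref{choice_coord_spheres} with the individual vectors $\mathfrak{r}_i, \mathfrak{r}_j, \mathfrak{r}_k, \mathfrak{r}_l$ in turn: pairing with $\mathfrak{r}_i$ gives $-\lspan{\mathfrak{r}_i,\mathfrak{r}_j} + \lspan{\mathfrak{r}_i,\mathfrak{r}_k} - \lspan{\mathfrak{r}_i,\mathfrak{r}_l} = 0$, pairing with $\mathfrak{r}_k$ gives $\lspan{\mathfrak{r}_k,\mathfrak{r}_i} - \lspan{\mathfrak{r}_k,\mathfrak{r}_j} - \lspan{\mathfrak{r}_k,\mathfrak{r}_l} = 0$, pairing with $\mathfrak{r}_j$ gives $-\lspan{\mathfrak{r}_j,\mathfrak{r}_i} + \lspan{\mathfrak{r}_j,\mathfrak{r}_k} - \lspan{\mathfrak{r}_j,\mathfrak{r}_l} = 0$, and pairing with $\mathfrak{r}_l$ gives $\lspan{\mathfrak{r}_l,\mathfrak{r}_i} - \lspan{\mathfrak{r}_l,\mathfrak{r}_j} - \lspan{\mathfrak{r}_l,\mathfrak{r}_k} = 0$; adding the first two and subtracting the last two should collapse everything and yield $\lspan{\mathfrak{r}_i,\mathfrak{r}_k} = \lspan{\mathfrak{r}_j,\mathfrak{r}_l}$, completing the proof. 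If the linear algebra does not close quite this neatly I would fall back on the brute-force expansion of $\lspan{\one{\mathfrak{n}}, \two{\mathfrak{n}}}$ sketched above, which only uses the light-cone condition and \eqref{choice_coord_spheres} and is guaranteed to terminate.
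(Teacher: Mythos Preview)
Your first approach --- writing $2\lspan{\two{\mathfrak{n}},\one{\mathfrak{n}}}$ as the sum of the two products obtained from the two representations in \eqref{equ_coord_compl}, then cancelling via the light-cone condition and \eqref{choice_coord_spheres} --- is exactly the paper's proof. Your polarization variant is a legitimate alternative that reduces the claim to $\lspan{\mathfrak{r}_i,\mathfrak{r}_k}=\lspan{\mathfrak{r}_j,\mathfrak{r}_l}$; be aware that two of your four pairings carry sign slips (pairing \eqref{choice_coord_spheres} with $\mathfrak{r}_j$ gives $+\lspan{\mathfrak{r}_i,\mathfrak{r}_j}+\lspan{\mathfrak{r}_j,\mathfrak{r}_k}-\lspan{\mathfrak{r}_j,\mathfrak{r}_l}=0$, and with $\mathfrak{r}_l$ gives $\lspan{\mathfrak{r}_i,\mathfrak{r}_l}-\lspan{\mathfrak{r}_j,\mathfrak{r}_l}+\lspan{\mathfrak{r}_k,\mathfrak{r}_l}=0$), but with the corrected signs simply adding all four relations yields $2\lspan{\mathfrak{r}_i,\mathfrak{r}_k}-2\lspan{\mathfrak{r}_j,\mathfrak{r}_l}=0$, which is what you need.
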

\begin{proof}
Without loss of generality, for any quadrilateral we choose homogeneous coordinates as given in Equation~\eqref{equ_coord_compl}. Then
\begin{align*}
2 \langle \mathfrak{n}^{(2)} , \mathfrak{n}^{(1)}\rangle &= \langle \mathfrak{r}_i-\mathfrak{r}_l, \mathfrak{r}_i - \mathfrak{r}_j \rangle + \langle \mathfrak{r}_j-\mathfrak{r}_k , \mathfrak{r}_l - \mathfrak{r}_k \rangle
\\&= \langle \mathfrak{r}_j , -\mathfrak{r}_i - \mathfrak{r}_k + \mathfrak{r}_l \rangle + \langle \mathfrak{r}_l, - \mathfrak{r}_i+ \mathfrak{r}_j - \mathfrak{r}_k \rangle =0.
\end{align*}
\end{proof}
 
\begin{lem}
Let $(r_i, r_j, r_k, r_l)$ be four spheres of an elementary quadrilateral of a discrete R-congruence. 
\begin{enumerate}[label=(\roman*)]
\item $cr(r_i, r_j, r_k, r_l)< 0$ if and only if $\one{n}$ and $\two{n}$ determine two elliptic or two hyperbolic linear sphere complexes.
\item $cr(r_i, r_j, r_k, r_l)> 0$ if and only if the linear sphere complexes determined by $\one{n}$ and $\two{n}$ are of different type, that is, one linear sphere complex is elliptic and the other one is hyperbolic.
\end{enumerate}
\end{lem}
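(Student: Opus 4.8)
The plan is to work with the homogeneous coordinates normalized as in Equation \eqref{equ_coord_compl}, so that $\mathfrak{n}^{(1)} = \mathfrak{r}_i - \mathfrak{r}_j$ and $\mathfrak{n}^{(2)} = \mathfrak{r}_i - \mathfrak{r}_l$, and to relate the signs of $\langle \mathfrak{n}^{(1)}, \mathfrak{n}^{(1)} \rangle$ and $\langle \mathfrak{n}^{(2)}, \mathfrak{n}^{(2)} \rangle$ (which classify the two complexes as elliptic, hyperbolic, or parabolic) to the cross-ratio $cr(r_i, r_j, r_k, r_l)$. Since all four spheres are lightlike, $\langle \mathfrak{r}_i, \mathfrak{r}_i \rangle = 0$ etc., and we get $\langle \mathfrak{n}^{(1)}, \mathfrak{n}^{(1)} \rangle = -2\langle \mathfrak{r}_i, \mathfrak{r}_j \rangle$ and $\langle \mathfrak{n}^{(2)}, \mathfrak{n}^{(2)} \rangle = -2\langle \mathfrak{r}_i, \mathfrak{r}_l \rangle$. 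So the product of the two self-pairings is $4 \langle \mathfrak{r}_i, \mathfrak{r}_j \rangle \langle \mathfrak{r}_i, \mathfrak{r}_l \rangle$, and the task reduces to showing this product has the opposite sign to $cr(r_i, r_j, r_k, r_l)$.

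The key computational step is therefore to express the cross-ratio in terms of the inner products $\langle \mathfrak{r}_i, \mathfrak{r}_j \rangle$ and $\langle \mathfrak{r}_i, \mathfrak{r}_l \rangle$. Here I would use Lemma \ref{lem_cr_J}: by Proposition \ref{prop_inversions_for_r}, $\sigma_{\mathfrak{n}^{(2)}}(\mathfrak{r}_i) = \mathfrak{r}_l$ and $\sigma_{\mathfrak{n}^{(2)}}(\mathfrak{r}_j) = \mathfrak{r}_k$, so $r_i, r_j, r_k, r_l$ are exactly four spheres pairwise related by the Lie inversion $\sigma_{\mathfrak{n}^{(2)}}$, in the pattern $(s_1, s_2, \sigma(s_2), \sigma(s_1)) = (r_i, r_j, r_k, r_l)$. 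Lemma \ref{lem_cr_J} then gives
\begin{equation*}
cr(\mathfrak{r}_i, \mathfrak{r}_j, \mathfrak{r}_k, \mathfrak{r}_l) = \frac{\langle \mathfrak{r}_i, \mathfrak{r}_j \rangle \langle \mathfrak{n}^{(2)}, \mathfrak{n}^{(2)} \rangle}{2 \langle \mathfrak{r}_i, \mathfrak{n}^{(2)} \rangle \langle \mathfrak{r}_j, \mathfrak{n}^{(2)} \rangle}.
\end{equation*}
Using $\langle \mathfrak{n}^{(2)}, \mathfrak{n}^{(2)} \rangle = -2\langle \mathfrak{r}_i, \mathfrak{r}_l \rangle$, $\langle \mathfrak{r}_i, \mathfrak{n}^{(2)} \rangle = \langle \mathfrak{r}_i, \mathfrak{r}_i - \mathfrak{r}_l \rangle = -\langle \mathfrak{r}_i, \mathfrak{r}_l \rangle$, and $\langle \mathfrak{r}_j, \mathfrak{n}^{(2)} \rangle = \langle \mathfrak{r}_j, \mathfrak{r}_i - \mathfrak{r}_l \rangle = \langle \mathfrak{r}_i, \mathfrak{r}_j \rangle - \langle \mathfrak{r}_j, \mathfrak{r}_l \rangle$ (and noting from the coordinate normalization that $\langle \mathfrak{r}_j, \mathfrak{r}_l \rangle = \langle \mathfrak{r}_i, \mathfrak{r}_k \rangle$ while also $\langle \mathfrak{r}_i,\mathfrak{r}_j\rangle + \langle \mathfrak{r}_k,\mathfrak{r}_l\rangle = \langle \mathfrak{r}_i,\mathfrak{r}_l\rangle + \langle \mathfrak{r}_j,\mathfrak{r}_k\rangle$, etc.), the right-hand side simplifies to a ratio whose sign is exactly $-\operatorname{sign}(\langle \mathfrak{r}_i, \mathfrak{r}_j \rangle \langle \mathfrak{r}_i, \mathfrak{r}_l \rangle)$, i.e. the opposite of the sign of $\langle \mathfrak{n}^{(1)}, \mathfrak{n}^{(1)} \rangle \langle \mathfrak{n}^{(2)}, \mathfrak{n}^{(2)} \rangle$. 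Statements (i) and (ii) then follow immediately: $cr < 0$ forces the two self-pairings to have the same sign (both complexes elliptic or both hyperbolic — the parabolic case is excluded since the spheres are pairwise not in oriented contact, so all inner products are nonzero), and $cr > 0$ forces opposite signs (one elliptic, one hyperbolic).

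I expect the main obstacle to be the bookkeeping in the denominator: after substituting into Lemma \ref{lem_cr_J} one must verify that $\langle \mathfrak{r}_i, \mathfrak{n}^{(2)} \rangle \langle \mathfrak{r}_j, \mathfrak{n}^{(2)} \rangle$ is positive, or at least identify its sign unambiguously. A cleaner route that sidesteps this is to instead choose $s_1 = r_i$, $s_2 = r_l$ and apply Lemma \ref{lem_cr_J} with $\sigma = \sigma_{\mathfrak{n}^{(1)}}$ (since $\sigma^{(1)}(\mathfrak{r}_i) = \mathfrak{r}_j$ and $\sigma^{(1)}(\mathfrak{r}_l) = \mathfrak{r}_k$), giving $cr(\mathfrak{r}_i, \mathfrak{r}_l, \mathfrak{r}_k, \mathfrak{r}_j) = \frac{\langle \mathfrak{r}_i, \mathfrak{r}_l \rangle \langle \mathfrak{n}^{(1)}, \mathfrak{n}^{(1)} \rangle}{2\langle \mathfrak{r}_i, \mathfrak{n}^{(1)} \rangle \langle \mathfrak{r}_l, \mathfrak{n}^{(1)} \rangle}$; combining the two expressions (and using the standard cross-ratio symmetry $cr(\mathfrak{r}_i,\mathfrak{r}_j,\mathfrak{r}_k,\mathfrak{r}_l) = cr(\mathfrak{r}_i,\mathfrak{r}_l,\mathfrak{r}_k,\mathfrak{r}_j)^{-1}$ if needed, or rather working directly) lets the unwanted denominator factors cancel against each other. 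Either way, the essential content is that $cr$ and the product $\langle \mathfrak{n}^{(1)}, \mathfrak{n}^{(1)} \rangle \langle \mathfrak{n}^{(2)}, \mathfrak{n}^{(2)} \rangle$ carry opposite signs, which is a short computation once Lemma \ref{lem_cr_J} is invoked.
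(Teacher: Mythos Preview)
Your proposal is correct and follows essentially the same route as the paper: normalized coordinates from Equation~\eqref{equ_coord_compl}, then Lemma~\ref{lem_cr_J} applied with $\sigma_{\mathfrak{n}^{(2)}}$. The only difference is that the paper sidesteps your denominator worry by using the \emph{other} representation $\mathfrak{n}^{(2)}=\mathfrak{r}_j-\mathfrak{r}_k$ to compute $\langle \mathfrak{r}_j,\mathfrak{n}^{(2)}\rangle=-\langle \mathfrak{r}_j,\mathfrak{r}_k\rangle=-\langle \mathfrak{r}_i,\mathfrak{r}_l\rangle$, so the denominator becomes $2\langle \mathfrak{r}_i,\mathfrak{r}_l\rangle^2$ and one obtains the clean closed formula $cr=-\langle \mathfrak{n}^{(1)},\mathfrak{n}^{(1)}\rangle/\langle \mathfrak{n}^{(2)},\mathfrak{n}^{(2)}\rangle$ directly, with no need for your second application of Lemma~\ref{lem_cr_J}.
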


\begin{proof}
Suppose that $(r_i, r_j, r_k, r_l)$ are four spheres of an elementary quadrilateral of a discrete R-congruence and choose homogeneous coordinates such that $0=\mathfrak{r}_i-\mathfrak{r}_j+\mathfrak{r}_k-\mathfrak{r}_l$. Then, by Lemma~\ref{lem_cr_J}, we obtain that 
\begin{equation}\label{equ_cr}
 cr(r_i, r_j, r_k, r_l)= 
 \frac{
   \phantom{2}
   \langle \mathfrak{r}_i, \mathfrak{r}_j \rangle 
   \langle \mathfrak{\two{n}}, \mathfrak{\two{n}} \rangle
 }
 {
 2\langle \mathfrak{r}_i, \mathfrak{\two{n}} \rangle 
 \langle \mathfrak{r}_j, \mathfrak{\two{n}} \rangle
 }
 =
 -\frac{
   \lspan{\one{\mathfrak{n}},\one{\mathfrak{n}}}
 }{
   \lspan{\two{\mathfrak{n}},\two{\mathfrak{n}}}
 }
\end{equation}
and therefore conclude that
\begin{align*}
cr(r_i, r_j, r_k, r_l) < 0 \ &\Leftrightarrow \ \text{sign} \{{\lspan{\mathfrak{\one{n}}, \mathfrak{\one{n}}}}\} = \text{sign} \{{\lspan{\mathfrak{\two{n}}, \mathfrak{\two{n}}}}\}.
\end{align*}
The points $\one{n}$ and $\two{n}$ lie in the plane spanned by the spheres ${r}_i, {r}_j, {r}_k$ and ${r}_l$. Therefore, if the plane has signature $(2,1)$, the linear sphere complexes are elliptic; if the spheres lie in a $(1,2)$-plane, then the linear sphere complexes are hyperbolic.

Moreover, $cr(r_i, r_j, r_k, r_l) > 0$ if and only if 
\begin{equation*}
\text{sign} \{{\lspan{\mathfrak{\two{n}}, \mathfrak{\two{n}}}}\}=-\text{sign} \{{\lspan{\mathfrak{\one{n}}, \mathfrak{\one{n}}}}\},
\end{equation*}
that is, if and only if the two linear sphere complexes are of different type (see Fig.~\ref{fig:embedded_quad} for the M\"obius geometric picture).
\end{proof}

This proposition also includes the special case of four concircular point spheres. It is well-known that a circular quadrilateral is embedded if and only if the cross-ratio is negative. In this case, the Lie inversions $\one{\sigma}$ and $\two{\sigma}$ become M\"obius inversions (see Figure 3).

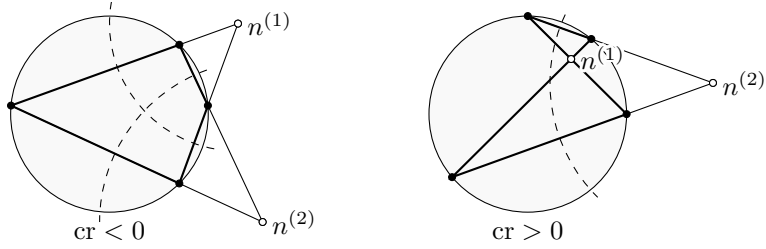
\begin{figure}[tb]
  \begin{tikzpicture}[scale=1.3]
  \def\radius{1cm}
  \def\ptsize{1pt}
  \coordinate (A) at (45:\radius);
  \coordinate (B) at (175:\radius);
  \coordinate (C) at (315:\radius);
  \coordinate (D) at (5:\radius);
  \draw [fill=white!95!gray] (0,0) circle [radius=\radius];
  \coordinate (n1) at (intersection of A--B and C--D);
  \coordinate (n2) at (intersection of A--D and C--B);
  \coordinate (n3) at (intersection of A--C and D--B);
  \draw [thick] (A)--(B)--(C)--(D)--(A);
  \draw (A)--(n1)--(D);
  \draw (A)--(n2)--(B);
  \draw [fill=white] (n1) circle [radius=\ptsize]; 
  \draw [fill=white] (n2) circle [radius=\ptsize]; 
  \draw [fill=black] (A) circle [radius=\ptsize];
  \draw [fill=black] (B) circle [radius=\ptsize];
  \draw [fill=black] (C) circle [radius=\ptsize];
  \draw [fill=black] (D) circle [radius=\ptsize];
  \draw [fill=white] (n1) circle [radius=\ptsize]; 
  \draw [fill=white] (n2) circle [radius=\ptsize]; 
  \node [anchor=west] at (n1) {$n^{(1)}$};
  \node [anchor=west] at (n2) {$n^{(2)}$};
  \node [anchor=north] at (0,-\radius) {$\operatorname{cr} < 0$};
  \clip (current bounding box.south west) rectangle (current bounding box.north east);
\draw [dashed] (n1) ++(170:1.3) arc (170:260:1.3); 
\draw [dashed] (n2) ++(110:1.65) arc (110:180:1.65); 
\end{tikzpicture}
  \hspace{1cm}
  \begin{tikzpicture}[scale=1.3]
  \def\radius{1cm}
  \def\ptsize{1pt}
  \coordinate (A) at (90:\radius);
  \coordinate (C) at (220:\radius);
  \coordinate (B) at (360:\radius);
  \coordinate (D) at (50:\radius);
  \draw [fill=white!95!gray] (0,0) circle [radius=\radius];
  \coordinate (n1) at (intersection of A--B and C--D);
  \coordinate (n2) at (intersection of A--D and C--B);
  \draw [thick] (A)--(B)--(C)--(D)--(A);
  \draw (A)--(n1)--(D);
  \draw (A)--(n2)--(B);
  \draw [fill=white] (n1) circle [radius=\ptsize]; 
  \draw [fill=white] (n2) circle [radius=\ptsize]; 
  \draw [fill=black] (A) circle [radius=\ptsize];
  \draw [fill=black] (B) circle [radius=\ptsize];
  \draw [fill=black] (C) circle [radius=\ptsize];
  \draw [fill=black] (D) circle [radius=\ptsize];
  \draw [fill=white] (intersection of A--B and C--D) circle [radius=\ptsize]; 
  \draw [fill=white] (intersection of A--D and C--B) circle [radius=\ptsize]; 
\node [anchor=west] at (n1) {\contour{white}{$n^{(1)}$}};
  \node [anchor=west] at (n2) {$n^{(2)}$};
  \node [anchor=north] at (0,-\radius) {$\operatorname{cr} > 0$};
  \clip (current bounding box.south west) rectangle (current bounding box.north east);
  \draw [dashed] (n2) ++(155:1.65) arc (155:225:1.65); 
\end{tikzpicture}
  \caption{
    Whether a circular quadrilateral is embedded or non-embedded depends on the sign of the cross-ratio of the circular quadrilateral. The sign also determines the type of the M\"obius inversion (resp.\ linear sphere complex in Lie geometry) associated to the quadrilateral.  
  }
  \label{fig:embedded_quad}
\end{figure}

\medskip

\noindent With the Lie inversions at hand, we are now prepared to discuss the main objects of interest, namely the envelopes of a discrete R-congruence:

\begin{defi}
  A discrete Legendre map $f:\mathcal{V}\rightarrow \mathcal{Z}$ is the \emph{envelope} of a discrete R-congruence $r:\mathcal{V}\rightarrow \mathbb{P}(\mathcal{L})$, if $r_i \in f_i$ for all $i \in \mathcal{V}$. 
\end{defi}

\noindent Envelopes of a discrete R-congruence can be constructed from one prescribed initial contact element using the inversions defined in Proposition \ref{prop_inversions_for_r}. Thus, suppose that $f_i:=\spann{s_0, r_i}$ is an arbitrary initial contact element at the vertex $i \in \mathcal{V}$. Then, the contact elements 
\begin{equation*}
\mathfrak{f}_j:=\one{\sigma}(\mathfrak{f}_i) , \ \ \mathfrak{f}_k:=\two{\sigma}(\mathfrak{f}_j), \ \ \mathfrak{f}_l:=\one{\sigma}(\mathfrak{f}_k)=\two{\sigma}(\mathfrak{f}_i)
\end{equation*} 
define an envelope for the face $(ijkl)$ of the discrete R-congruence $r$: firstly, observe that by Lemmas~\ref{lem_inversions_commute} and \ref{lem_involutive}, the contact elements $\one{\sigma}(\mathfrak{f}_k)$ and $\two{\sigma}(\mathfrak{f}_i)$ indeed coincide. Furthermore, from  Proposition~\ref{prop_inversions_for_r} and Lemma~\ref{lem_inversion_family}, we deduce that the contact elements envelop the discrete R-congruence,
\begin{equation*}
r_j \in f_j, \ \ r_k \in f_k \ \ \text{and } \ r_l \in f_l,
\end{equation*}
and two adjacent ones intersect.

Moreover, due to Lemma~\ref{lem_inversion_family}\ref{lem_inversion_two}, this construction uniquely extends to all vertices $\mathcal{V}$ of the quadrilateral cell complex. 

\medskip

\noindent Conversely, any envelope of a discrete R-congruence arises in this way:
\begin{lem}\label{transport_contact}
Let $f:\mathcal{V} \rightarrow \mathcal{Z}$ be a discrete Legendre map enveloping the discrete R-congruence $r:\mathcal{V}\rightarrow \mathbb{P}(\mathcal{L})$, then for any quadrilateral $(ijkl)$ we obtain that
\begin{align*}
&\one{\sigma}(\mathfrak{f}_i)=\mathfrak{f}_j, \ \ \one{\sigma}(\mathfrak{f}_l)=\mathfrak{f}_k \ \ \text{ and } \ \ \two{\sigma}(\mathfrak{f}_i)=\mathfrak{f}_l, \ \ \two{\sigma}(\mathfrak{f}_j)=\mathfrak{f}_k.
\end{align*}
\end{lem}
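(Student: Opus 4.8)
The plan is to reduce the claim to the behaviour of the two Lie inversions on a single prescribed contact element, exactly as in the ``forward'' construction that immediately precedes the lemma. Fix a quadrilateral $(ijkl)$ of the discrete R-congruence $r$ and let $f$ be a discrete Legendre map enveloping $r$. Since $f$ is a discrete Legendre map, the adjacent contact elements $f_i$ and $f_j$ share the curvature sphere $s_{ij} := f_i \cap f_j$, and similarly for the other three edges of the face; moreover $r_i \in f_i$, $r_j\in f_j$, $r_k\in f_k$, $r_l\in f_l$ by the envelope condition. The strategy is: first show $\one{\sigma}(\mathfrak{f}_i) = \mathfrak{f}_j$ (as contact elements, i.e.\ as lines in $\mathbb{P}(\mathcal{L})$), and then obtain the remaining three identities by the same argument applied to the appropriate edges together with the already-established relations and the commutation/involutivity statements (Lemmas~\ref{lem_inversions_commute} and~\ref{lem_involutive}).

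For the first identity I would argue as follows. By Proposition~\ref{prop_inversions_for_r} we have $\one{\sigma}(\mathfrak{r}_i) = \mathfrak{r}_j$, so $\one{\sigma}$ already maps the sphere $r_i \in f_i$ to the sphere $r_j \in f_j$; it remains to see that it maps the whole line $f_i$ onto the whole line $f_j$, not onto some other line through $r_j$. The key observation is that the curvature sphere $s_{ij}$ is in oriented contact with both $r_i$ and $r_j$: indeed $s_{ij} \perp r_i$ since $s_{ij} \in f_i \ni r_i$, and $s_{ij} \perp r_j$ since $s_{ij} \in f_j \ni r_j$. Recalling from Proposition~\ref{prop_inversions_for_r} that $\one{\mathfrak{n}} = \mathfrak{r}_i - \mathfrak{r}_j$ (in the coordinates of Equation~\eqref{equ_coord_compl}), we get $\lspan{\mathfrak{s}_{ij}, \one{\mathfrak{n}}} = \lspan{\mathfrak{s}_{ij}, \mathfrak{r}_i} - \lspan{\mathfrak{s}_{ij}, \mathfrak{r}_j} = 0$, so $s_{ij}$ lies in the linear sphere complex $(\one{n})^\perp$ and is therefore fixed by $\one{\sigma}$. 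Hence $\one{\sigma}$ fixes $s_{ij}$ and sends $r_i$ to $r_j$; since $f_i = \spann{s_{ij}, r_i}$ and $\one{\sigma}$ is a linear map, $\one{\sigma}(f_i) = \spann{s_{ij}, r_j} = f_j$. (Should $r_i$ and $s_{ij}$ fail to be distinct — i.e.\ the degenerate case $r_i = s_{ij}$, meaning $r_i$ is already the curvature sphere — the contact element $f_i$ is not spanned by these two and one instead picks any second sphere $v \in f_i$ with $v \neq s_{ij}$ and runs the argument of Lemma~\ref{lem_inversion_family}\ref{lem_inversion_two}; but generically $f_i = \spann{s_{ij}, r_i}$ and this is the main line.)

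The other three identities follow by symmetry and bookkeeping. Running the same argument on the edge $(lk)$ with the curvature sphere $s_{lk} = f_l \cap f_k$ — which again lies in $(\one{n})^\perp$ because $\one{\mathfrak{n}} = \mathfrak{r}_l - \mathfrak{r}_k$ as well and $s_{lk}\perp r_l, r_k$ — gives $\one{\sigma}(\mathfrak{f}_l) = \mathfrak{f}_k$. Likewise, using $\two{\mathfrak{n}} = \mathfrak{r}_i - \mathfrak{r}_l = \mathfrak{r}_j - \mathfrak{r}_k$ and the curvature spheres $s_{il} = f_i\cap f_l$ and $s_{jk} = f_j \cap f_k$ (both of which pair to zero with $\two{\mathfrak{n}}$), the map $\two{\sigma}$ fixes $s_{il}$ and $s_{jk}$ and sends $r_i\mapsto r_l$, $r_j\mapsto r_k$, hence $\two{\sigma}(\mathfrak{f}_i) = \mathfrak{f}_l$ and $\two{\sigma}(\mathfrak{f}_j) = \mathfrak{f}_k$. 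All four relations are then exactly those asserted.

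The main obstacle I anticipate is not conceptual but one of care at the level of contact elements versus spheres: one must be sure that each curvature sphere actually lies in the relevant complex $(\one{n})^\perp$ or $(\two{n})^\perp$ (this is the computation above, and it hinges on the two equivalent expressions for $\one{\mathfrak{n}}$ and $\two{\mathfrak{n}}$ in Equation~\eqref{equ_coord_compl}), and that the contact element really is recovered as the span of its curvature sphere with the R-congruence sphere, handling the degenerate coincidence as in Lemma~\ref{lem_inversion_family}\ref{lem_inversion_two}. Once that is in place, the four identities drop out uniformly. Note that no consistency check across faces is needed here: the statement is local to a single quadrilateral, and global well-definedness of envelopes was already addressed before the lemma via Lemma~\ref{lem_inversion_family}\ref{lem_inversion_two}.
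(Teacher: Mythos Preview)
Your proposal is correct and follows essentially the same approach as the paper: the key step in both is that the curvature sphere $s_{ij}$ is orthogonal to both $r_i$ and $r_j$, hence lies in $(\one{n})^\perp$ and is fixed by $\one{\sigma}$, so that $\one{\sigma}$ carries $f_i=\spann{r_i,s_{ij}}$ to $f_j=\spann{r_j,s_{ij}}$. The paper phrases the conclusion as a one-line computation on a generic element $\mathfrak{r}_i+\lambda\mathfrak{s}_{ij}$ of $f_i$, but the content is identical to yours.
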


\begin{proof}
  Assume that $f$ is an envelope, then we know that $r_i \in f_i$ and $r_j \in f_j$ (see Fig.~\ref{fig:contact_elements} \emph{Left} for the labelling of a quadrilateral). Therefore, the Lie inversion $\one{\sigma}$ preserves the curvature sphere $s_{ij}=f_i \cap f_j$.

Hence, for any sphere $\mathfrak{r}_i + \lambda\mathfrak{s}_{ij} \in \mathfrak{f}_i$ in the contact element, it follows that
\begin{align*}
\one{\sigma}(\mathfrak{r}_i + \lambda\mathfrak{s}_{ij})= \one{\sigma}(\mathfrak{r}_i) + \lambda \one{\sigma}(\mathfrak{s}_{ij})= \mathfrak{r}_j + \lambda \mathfrak{s}_{ij} \in \mathfrak{f}_j,
\end{align*}
which completes the proof.
\end{proof}

\noindent Thus, we have reproven the following existence result for envelopes of a discrete R-congruence that was already given in \cite[Theorem~3.37]{ddg_book}: 

\begin{corollaryand}\label{cor_initial_contact_el}
Any envelope of a discrete R-congruence is uniquely determined by the choice of an initial contact element and, therefore, any discrete R-congruence admits a 2-parameter family of envelopes. 

This family of envelopes is said to be the \emph{Ribaucour family of a discrete R-congruence}. If the discrete R-congruence consists of faces with signature $(2,1)$, the family is also called a \emph{$(2,1)$-Ribaucour family}.
\end{corollaryand}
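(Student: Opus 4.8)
The plan is to show that the iterative construction described just before the statement is both well-defined on the whole cell complex and exhausts all envelopes, so that envelopes are in bijection with initial contact elements; the ``2-parameter'' count then follows by parametrising the initial contact element. First I would fix a base vertex $i_0\in\mathcal{V}$ and an arbitrary contact element $f_{i_0}\in\mathcal{Z}$ containing $r_{i_0}$ (note $f_{i_0}=\spann{s_0,r_{i_0}}$ for some sphere $s_0$, which is the one real degree of freedom plus the choice of $s_0$ along the line — together a $2$-dimensional choice). Propagating along edges by the rule $\mathfrak{f}_j:=\one{\sigma}(\mathfrak{f}_i)$ and $\mathfrak{f}_l:=\two{\sigma}(\mathfrak{f}_i)$ of Lemma~\ref{transport_contact}, I would invoke the computation already carried out in the paragraph preceding Corollary and Definition~\ref{cor_initial_contact_el}: by Lemmas~\ref{lem_involutive} and \ref{lem_inversions_commute} the two inversions attached to a face commute, so $\one{\sigma}(\two{\sigma}(\mathfrak{f}_i))=\two{\sigma}(\one{\sigma}(\mathfrak{f}_i))$, which is exactly the closing condition around one quadrilateral. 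This is the main obstacle: one must check that the propagated value $f_j$ at a vertex is independent of the path used to reach it, and for a general (connected, simply connected) quadrilateral cell complex of degree $4$ this reduces to consistency around a single face, which is the commuting relation just noted. (If $\mathcal{G}$ is not simply connected one would restrict to its universal cover, or simply assert the local result; I expect the paper intends the former convention, consistent with \cite{ddg_book}.)

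Next I would verify that the resulting line congruence $f:\mathcal{V}\to\mathcal{Z}$ is genuinely a discrete Legendre map enveloping $r$. That $r_i\in f_i$ for all $i$ is immediate by induction: $r_{i_0}\in f_{i_0}$ by construction, and Proposition~\ref{prop_inversions_for_r} gives $\one{\sigma}(\mathfrak{r}_i)=\mathfrak{r}_j$, so $r_i\in f_i$ forces $r_j\in\one{\sigma}(f_i)=f_j$. That adjacent contact elements share a common sphere — i.e.\ the Legendre (curvature-sphere) condition $f_i\cap f_j\neq\emptyset$ — follows from Lemma~\ref{lem_inversion_family}\ref{lem_inversion_two} applied to the pair $r_i,r_j=\bar r$: any contact element through $r_i$ is mapped by $\one{\sigma}$ to a contact element meeting it in the distinguished fixed sphere $\tilde s$, so $f_i\cap f_j=\lspan{\tilde{\mathfrak{s}}}\neq\emptyset$. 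Hence $f$ is a discrete Legendre map and an envelope of $r$.

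Then I would prove the converse — that every envelope arises this way — which is precisely the content of Lemma~\ref{transport_contact}: if $f$ is any envelope, then $\one{\sigma}(\mathfrak{f}_i)=\mathfrak{f}_j$ and $\two{\sigma}(\mathfrak{f}_i)=\mathfrak{f}_l$ on each face, so $f$ is completely recovered from its value $f_{i_0}$ at the base vertex by the same propagation rule. Combining the two directions, the assignment $f\mapsto f_{i_0}$ is a bijection between the set of envelopes of $r$ and the set $\{\,\text{contact elements at }i_0\text{ containing }r_{i_0}\,\}$. This last set is a $2$-dimensional family: a contact element containing the fixed sphere $r_{i_0}$ is a line in $\mathbb{P}(\mathcal{L})$ through the point $r_{i_0}$, and the set of isotropic lines of $\mathbb{R}^{4,2}$ through a fixed isotropic point is $2$-dimensional (equivalently, the second sphere $s_0$ ranges over $\mathbb{P}(\mathcal{L})\cap\mathfrak{r}_{i_0}^\perp$, a $2$-dimensional quadric modulo the $1$-dimensional ambiguity of moving $s_0$ along the line, giving $3-1=2$). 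Therefore the Ribaucour family is a $2$-parameter family, and when every face has signature $(2,1)$ we call it a $(2,1)$-Ribaucour family, as stated. The naming assertions in the second paragraph of the statement are definitional and require no further argument.
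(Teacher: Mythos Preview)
Your approach is the same as the paper's, and most of it is fine, but there is one genuine gap in the path-independence argument. The inversions $\one{\sigma},\two{\sigma}$ are defined \emph{per face}, not per edge: an edge $(ij)$ is shared by two faces $F$ and $F'$, and the corresponding inversions $\one{\sigma}_F$ and $\one{\sigma}_{F'}$ are in general \emph{different} members of the pencil $\sigma_\lambda$ with $\mathfrak{n}_\lambda=\mathfrak{r}_i-\lambda\mathfrak{r}_j$ from Lemma~\ref{lem_inversion_family}. So your sentence ``this reduces to consistency around a single face, which is the commuting relation just noted'' is not yet justified: commutativity of $\one{\sigma}$ and $\two{\sigma}$ closes the loop \emph{within} one face, but you have not checked that propagation of $f_i$ to $f_j$ along the shared edge gives the same result whether you use $F$ or $F'$. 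Only after this edge-compatibility is established do you have well-defined edge maps, and only then does simple connectivity reduce global path-independence to the face relation.

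The fix is exactly Lemma~\ref{lem_inversion_family}\ref{lem_inversion_two}, which you already cite --- but for the wrong purpose. The paper invokes it here to show that \emph{any} inversion $\sigma_\lambda$ in the pencil sends a given contact element through $r_i$ to the \emph{same} contact element through $r_j$; hence $\one{\sigma}_F(f_i)=\one{\sigma}_{F'}(f_i)$ as contact elements, and the propagation along an edge is face-independent. (As a bonus, the fixed sphere $\tilde s$ in that lemma simultaneously gives the curvature sphere $f_i\cap f_j$, which is the Legendre condition you used it for.) Once you insert this step, the rest of your argument --- commuting inversions for the face relation, Lemma~\ref{transport_contact} for surjectivity, and the dimension count for initial contact elements --- matches the paper and is correct.
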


\noindent 
For later reference we remark on some relations between the envelopes and the Lie inversions associated to the discrete R-congruence (see Fig.~\ref{fig:contact_elements}, \emph{Right}):

\begin{lem}\label{lem_curv_spheres_swapped}
Let $f:\mathcal{V} \rightarrow \mathcal{Z}$ be a discrete Legendre map enveloping the discrete R-congruence $r:\mathcal{V}\rightarrow \mathbb{P}(\mathcal{L})$. Then, for any quadrilateral, the Lie inversions $\one{\sigma}$ and $\two{\sigma}$ swap the curvature spheres on opposite edges: 
\begin{equation*}
\two{\sigma}(\mathfrak{s}_{ij})=\mathfrak{s}_{kl} \ \ \text{and} \ \ \one{\sigma}(\mathfrak{s}_{jk})=\mathfrak{s}_{li}.
\end{equation*}
\end{lem}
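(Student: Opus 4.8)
The plan is to compute directly with the homogeneous coordinates $\mathfrak{n}^{(1)}, \mathfrak{n}^{(2)}$ normalised as in Equation \eqref{equ_coord_compl}, together with the fact (Lemma \ref{transport_contact}) that the envelope's contact elements are related by the Lie inversions $\one{\sigma}, \two{\sigma}$. The key observation is that the curvature sphere $\mathfrak{s}_{ij}$ lies in the contact element $\mathfrak{f}_i$, which $\two{\sigma}$ maps to $\mathfrak{f}_l$; so $\two{\sigma}(\mathfrak{s}_{ij})$ is some sphere in $\mathfrak{f}_l$. To pin it down as $\mathfrak{s}_{kl}$, I only need to check that $\two{\sigma}(\mathfrak{s}_{ij})$ also lies in the adjacent contact element $\mathfrak{f}_k$, since $\mathfrak{f}_k \cap \mathfrak{f}_l = s_{kl}$ by the Legendre property.

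First I would record the two facts from Lemma \ref{transport_contact} that I need: $\two{\sigma}(\mathfrak{f}_i) = \mathfrak{f}_l$ and $\two{\sigma}(\mathfrak{f}_j) = \mathfrak{f}_k$. Since $s_{ij} = f_i \cap f_j$, we have $\mathfrak{s}_{ij} \in \mathfrak{f}_i$ and $\mathfrak{s}_{ij} \in \mathfrak{f}_j$ simultaneously. Applying $\two{\sigma}$ and using that Lie inversions are linear and map lines (contact elements) to lines, I get $\two{\sigma}(\mathfrak{s}_{ij}) \in \two{\sigma}(\mathfrak{f}_i) = \mathfrak{f}_l$ and $\two{\sigma}(\mathfrak{s}_{ij}) \in \two{\sigma}(\mathfrak{f}_j) = \mathfrak{f}_k$. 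Hence $\two{\sigma}(\mathfrak{s}_{ij}) \in \mathfrak{f}_k \cap \mathfrak{f}_l$. Provided $f_k \neq f_l$ — which holds because $f$ is a discrete Legendre map with nondegenerate faces, so adjacent contact elements are distinct and meet in exactly the curvature sphere $s_{kl}$ — this forces $\two{\sigma}(\mathfrak{s}_{ij}) = s_{kl}$ as projective points. Since Lie inversions are involutive, $\two{\sigma}(\mathfrak{s}_{kl}) = \mathfrak{s}_{ij}$ as well, so the statement is symmetric in the pair $\{ij, kl\}$. The second identity $\one{\sigma}(\mathfrak{s}_{jk}) = \mathfrak{s}_{li}$ follows by the identical argument with the roles of the two index-directions swapped: $\one{\sigma}(\mathfrak{f}_j) = \mathfrak{f}_i$ and $\one{\sigma}(\mathfrak{f}_k) = \mathfrak{f}_l$ (again from Lemma \ref{transport_contact}), and $\mathfrak{s}_{jk} \in \mathfrak{f}_j \cap \mathfrak{f}_k$ maps to $\mathfrak{f}_i \cap \mathfrak{f}_l = s_{li}$.

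There is no real obstacle here; the only point requiring a word of care is the nondegeneracy that guarantees $f_k \cap f_l$ is a single curvature sphere rather than all of a contact element — i.e. that $f_k \neq f_l$. This is part of what it means for $f$ to be a (generic) discrete Legendre map enveloping a discrete R-congruence whose faces lie in genuine linear systems, so I would simply invoke it. Alternatively, if one prefers a coordinate verification, one can write $\mathfrak{s}_{ij} = \mathfrak{r}_i + \lambda \mathfrak{s}_{jk}$ for suitable scalars (expressing that $\mathfrak{s}_{ij}$ lies in $\mathfrak{f}_i = \spann{r_i, s_{ij}}$ and using the structure of the quadrilateral of contact elements) and apply $\two{\sigma}$ termwise using $\two{\sigma}(\mathfrak{r}_i) = \mathfrak{r}_l$ from Proposition \ref{prop_inversions_for_r}; but the synthetic argument above is cleaner and avoids any computation. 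I would therefore present the proof in the synthetic form.

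\begin{proof}
By Lemma \ref{transport_contact}, the Lie inversion $\two{\sigma}$ maps $\mathfrak{f}_i$ to $\mathfrak{f}_l$ and $\mathfrak{f}_j$ to $\mathfrak{f}_k$. Since $f$ is a discrete Legendre map, the curvature sphere $s_{ij} = f_i \cap f_j$ satisfies $\mathfrak{s}_{ij} \in \mathfrak{f}_i$ and $\mathfrak{s}_{ij} \in \mathfrak{f}_j$. As $\two{\sigma}$ is linear and maps contact elements to contact elements, we obtain
\begin{equation*}
\two{\sigma}(\mathfrak{s}_{ij}) \in \two{\sigma}(\mathfrak{f}_i) \cap \two{\sigma}(\mathfrak{f}_j) = \mathfrak{f}_l \cap \mathfrak{f}_k = \mathfrak{s}_{kl},
\end{equation*}
where the last equality uses that two adjacent contact elements of the discrete Legendre map $f$ meet exactly in their common curvature sphere. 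Hence $\two{\sigma}(\mathfrak{s}_{ij}) = \mathfrak{s}_{kl}$.

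The second identity follows analogously: by Lemma \ref{transport_contact}, $\one{\sigma}$ maps $\mathfrak{f}_j$ to $\mathfrak{f}_i$ and $\mathfrak{f}_k$ to $\mathfrak{f}_l$, so from $\mathfrak{s}_{jk} \in \mathfrak{f}_j \cap \mathfrak{f}_k$ we conclude
\begin{equation*}
\one{\sigma}(\mathfrak{s}_{jk}) \in \one{\sigma}(\mathfrak{f}_j) \cap \one{\sigma}(\mathfrak{f}_k) = \mathfrak{f}_i \cap \mathfrak{f}_l = \mathfrak{s}_{li},
\end{equation*}
that is, $\one{\sigma}(\mathfrak{s}_{jk}) = \mathfrak{s}_{li}$.
\end{proof}
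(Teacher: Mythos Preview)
Your proof is correct and is exactly the argument the paper has in mind: the lemma is stated without proof in the paper, as an immediate consequence of Lemma~\ref{transport_contact}, and your synthetic derivation from that lemma is the intended one.
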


\begin{figure}[tb]
  \newcommand{\tikzscale}{1}
  \begin{tikzpicture}[rotate=25,scale=\tikzscale]
  \def\radius{1cm}
  \def\ptsize{1pt}
  \coordinate (A) at (-1, 1);
  \coordinate (B) at (-0.2, -0.5);
  \coordinate (C) at (1.1, -0.4);
  \coordinate (D) at (1.5, 0.5);
  \coordinate (n1) at (intersection of A--B and C--D);
  \coordinate (n2) at (intersection of A--D and C--B);
  \coordinate (BC) at ($(B)!0.5!(C)$);
  \coordinate (sBC) at ($(n1)!0.6!(BC)$);
  \coordinate (sAD) at ($(n1)!3.0!(BC)$);
  \coordinate (sAB) at (intersection of sBC--B and sAD--A); 
  \coordinate (sCD) at (intersection of sBC--C and sAD--D); 
  \draw [dashed] (A) -- (n1) -- (D);
  \draw [dashed] (n1) -- (sAD);
  \draw [line width=2*\ptsize, white] (A) -- (B) -- (C) -- (D) -- (A);
  \draw [thick, fill=black!5!white, fill opacity=.5] (A) -- (B) -- (C) -- (D) -- (A);
  \tikzset{line congruence/.style={black!50!green}}
  \draw [line congruence] ($(sAB)!-0.1!(sBC)$) -- ($(sAB)!1.1!(sBC)$);
  \draw [line congruence] ($(sBC)!-0.1!(sCD)$) -- ($(sBC)!1.1!(sCD)$);
  \draw [line congruence] ($(sCD)!-0.1!(sAD)$) -- ($(sCD)!1.1!(sAD)$);
  \draw [line congruence] ($(sAD)!-0.1!(sAB)$) -- ($(sAD)!1.1!(sAB)$);

  \draw [dashed] (B) -- (n2) -- (A);
  \draw [line width=2*\ptsize, white] (sCD) -- (sAB);
  \draw [dashed] (n2) -- (sAB);
  \tikzset{label node/.style={inner sep=0, outer sep=.1cm, circle, fill=white, fill opacity=0.8}}
  \node [anchor=south east] at (A) {$r_i$};
  \node [anchor=north east] at (B) {$r_j$};
  \node [anchor=north west] at (C) {$r_k$};
  \node [anchor=south west] at (D) {$r_l$};

  \node [anchor=west] at (n2) {$n^{(2)}$};
  \node [anchor=west] at (n1) {$n^{(1)}$};
  
  \node [label node, anchor=east] at (sAB) {$s_{ij}$};
  \node [label node, anchor=north] at (sBC) {$s_{jk}$};
  \node [label node, anchor=west] at (sCD) {$s_{kl}$};
  \node [label node, anchor=south] at (sAD) {$s_{il}$};

  \node [label node, anchor=south west] at ($(sCD)!0.5!(sAD)$)
    {\color{black!50!green}$f_l$};
  \node [label node, anchor=south east] at ($(sAD)!0.4!(sAB)$)
    {\color{black!50!green}$f_i$};
  \node [label node, anchor=north east] at ($(sAB)!0.3!(sBC)$)
    {\color{black!50!green}$f_j$};
  \node [label node, anchor=west] at ($(sBC)!0.7!(sCD)$)
    {\color{black!50!green}$f_k$};
  \foreach \sphere in {A, B, C, D, sBC, sAD, sAB, sCD}
  {
    \draw [fill=black] (\sphere) circle [radius=\ptsize];
  }
  \foreach \complex in {n1, n2}
  {
    \draw [fill=white] (\complex) circle [radius=\ptsize];
  }
\end{tikzpicture}
  \begin{overpic}[width=7cm]{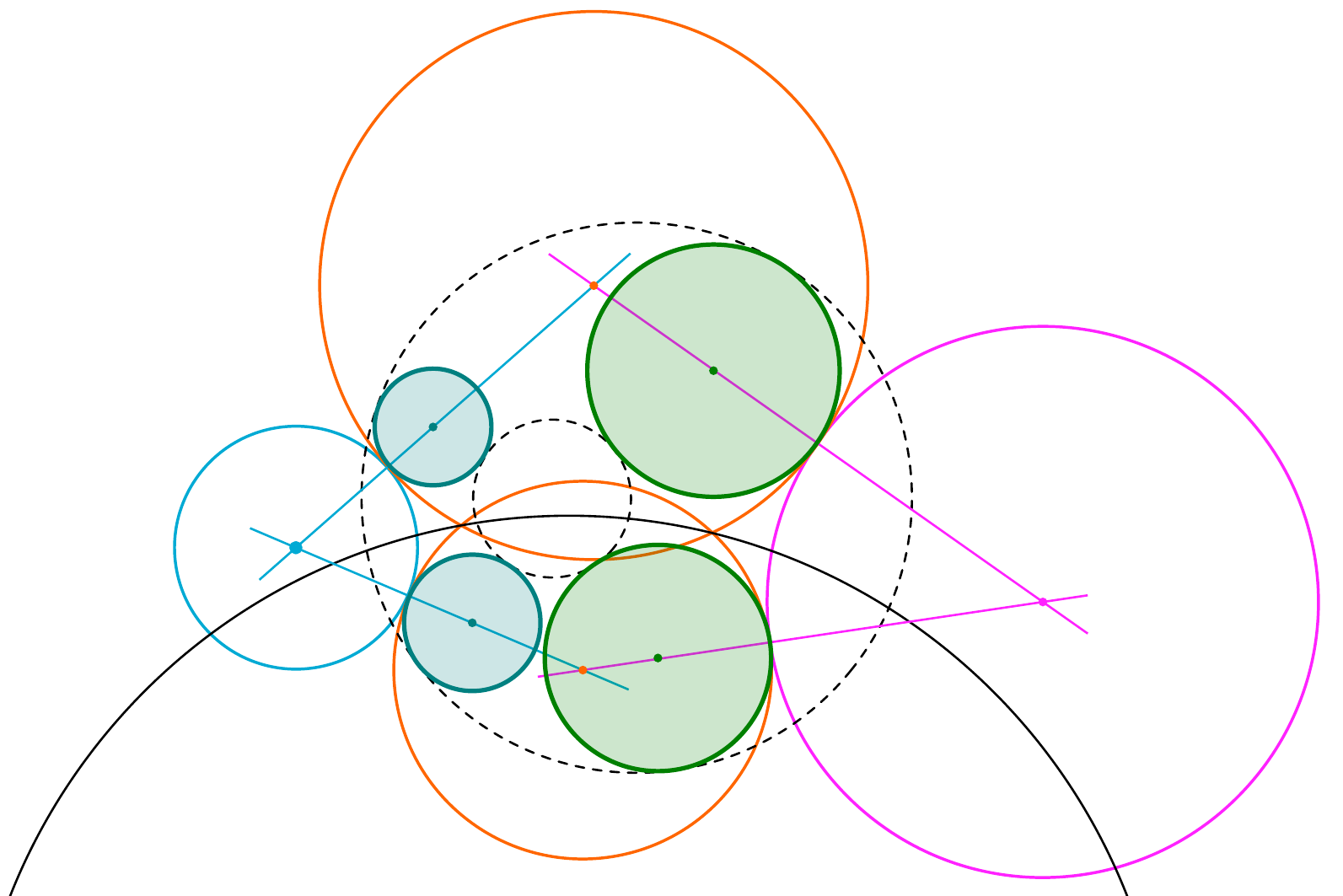}
    \put(33,17){$r_i$}
    \put(48,14){$r_j$}
    \put(52,35){$r_k$}
    \put(30,34){$r_l$}
    \put(1,13){$n^{(2)}$}
    \put(9,31){$s_{il}$}
    \put(90,40){$s_{jk}$}
    \put(30,3){$s_{ij}$}
    \put(20,55){$s_{kl}$}
  \end{overpic}
  \caption{
    \emph{Left:} An R-congruence quadrilateral $(r_i, r_j, r_k, r_l)$ with contact elements~$f$, curvature spheres~$s$, and the points~$\one{n}, \two{n}$ defining the complexes.
    \emph{Right:} The elements with the same color are mapped onto each other by the involution $\sigma^{(2)}$ induced by the complex $n^{(2)}$, e.g., $r_i \leftrightarrow r_l$, $s_{kl} \leftrightarrow s_{ij}$.
}
\label{fig:contact_elements}
\end{figure}

\subsection{M\"obius geometric point of view}\label{subsection_moebius}
In this subsection we briefly discuss the interplay between the construction of Ribaucour families in Lie sphere and M\"obius geometry. Thus, we fix a point sphere complex $\mathfrak{p} \in \mathbb{R}^{4,2}$, $\lspan{\mathfrak{p},\mathfrak{p}}=-1$, to distinguish a M\"obius geometry modelled on $\lspan{\mathfrak{p}}^\perp$.

Since a Ribaucour family consists of discrete Legendre maps, the choice of a point sphere complex reveals a 2-parameter family of enveloping circular principal contact element nets. The underlying circular nets are called the \emph{point sphere envelopes in $\lspan{\mathfrak{p}}^\perp$} (see Fig.~\ref{fig:circular_envelopes}).
 
\begin{figure}[b]
  \includegraphics[width=8cm]{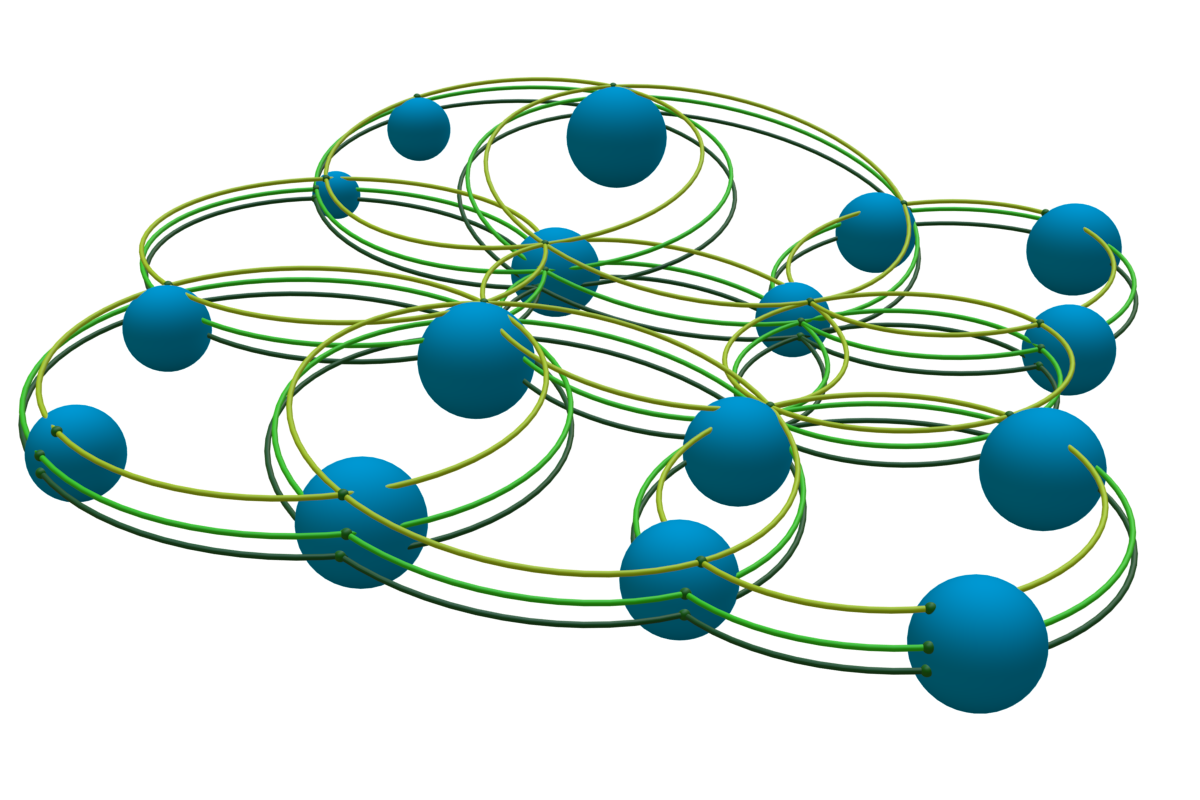}
  \caption{
    A discrete R-congruence with various
    envelopes (plotted as circular quadrilaterals) of the Ribaucour family.
  }
  \label{fig:circular_envelopes}
\end{figure}

\medskip

\noindent Note that a Lie inversion $\sigma_{\mathfrak{a}}$ is a M\"obius transformation, that is, it maps point spheres onto point spheres, if and only if $\lspan{\mathfrak{a},\mathfrak{p}}=0$. Therefore, generically, the Lie inversions $\one{\sigma}$ and $\two{\sigma}$ that map adjacent enveloping contact elements onto each other, do not transport point spheres along the discrete R-congruence. 

However, for each edge there exists a M\"obius transformation uniquely determined by data from the discrete R-congruence that relates point spheres of adjacent contact elements in the Ribaucour family:

\begin{prop}\label{prop_moebius_inversion_edge}
Let $r: \mathcal{V} \rightarrow \mathbb{P}(\mathcal{L})$ be a discrete R-congruence. Then the Lie inversions $\sigma_\mathfrak{m}$ with respect to the linear sphere complexes determined by
\begin{equation*}
m:\mathcal{E} \rightarrow \mathbb{R}^{4,2}, \  m_{ij}=\lspan{\mathfrak{p}, r_j}r_i - \lspan{\mathfrak{p}, r_i}r_j
\end{equation*} 
are M\"obius transformations and satisfy, for any edge $(ij)$,
\begin{equation*}
\sigma_\mathfrak{m_{ij}}(\mathfrak{r}_i)=\mathfrak{r}_j.
\end{equation*} 
Moreover, those M\"obius transformations transport the contact elements of envelopes along the discrete R-congruence and map adjacent point spheres of the point sphere envelope onto each other.
\end{prop}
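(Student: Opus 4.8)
The plan is to verify each of the three assertions in Proposition~\ref{prop_moebius_inversion_edge} directly from the explicit formula for $\mathfrak{m}_{ij}$, using the defining formula \eqref{equ_lie_inversion} for a Lie inversion and the characterization recalled just before the statement (namely that $\sigma_{\mathfrak{a}}$ is a M\"obius transformation iff $\lspan{\mathfrak{a},\mathfrak{p}}=0$). First I would compute $\lspan{\mathfrak{m}_{ij},\mathfrak{p}}$: expanding bilinearly,
\begin{equation*}
\lspan{\mathfrak{m}_{ij},\mathfrak{p}} = \lspan{\mathfrak{p},\mathfrak{r}_j}\lspan{\mathfrak{r}_i,\mathfrak{p}} - \lspan{\mathfrak{p},\mathfrak{r}_i}\lspan{\mathfrak{r}_j,\mathfrak{p}} = 0,
\end{equation*}
so $\sigma_{\mathfrak{m}_{ij}}$ indeed preserves the point sphere complex and is a M\"obius transformation. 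For this to make sense as a Lie inversion one also needs $\lspan{\mathfrak{m}_{ij},\mathfrak{m}_{ij}}\neq 0$; I would note that $\mathfrak{m}_{ij}$ is a nonzero combination of the non-contacting spheres $r_i,r_j$ lying in their span, and that $\lspan{\mathfrak{m}_{ij},\mathfrak{m}_{ij}} = -2\lspan{\mathfrak{p},\mathfrak{r}_i}\lspan{\mathfrak{p},\mathfrak{r}_j}\lspan{\mathfrak{r}_i,\mathfrak{r}_j}$, which is nonzero precisely because the point spheres of $r_i,r_j$ are genuine spheres (not planes, i.e. $\lspan{\mathfrak{p},\mathfrak{r}_i},\lspan{\mathfrak{p},\mathfrak{r}_j}\neq0$) and $r_i,r_j$ are not in oriented contact. (A remark handling the degenerate cases where $r_i$ or $r_j$ is a plane, as the paper does elsewhere, may be warranted.)

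Next I would check $\sigma_{\mathfrak{m}_{ij}}(\mathfrak{r}_i) = \mathfrak{r}_j$ up to scale. From \eqref{equ_lie_inversion},
\begin{equation*}
\sigma_{\mathfrak{m}_{ij}}(\mathfrak{r}_i) = \mathfrak{r}_i - \frac{2\lspan{\mathfrak{r}_i,\mathfrak{m}_{ij}}}{\lspan{\mathfrak{m}_{ij},\mathfrak{m}_{ij}}}\mathfrak{m}_{ij},
\end{equation*}
and since $\lspan{\mathfrak{r}_i,\mathfrak{m}_{ij}} = -\lspan{\mathfrak{p},\mathfrak{r}_i}\lspan{\mathfrak{r}_i,\mathfrak{r}_j}$, while $\lspan{\mathfrak{m}_{ij},\mathfrak{m}_{ij}} = -2\lspan{\mathfrak{p},\mathfrak{r}_i}\lspan{\mathfrak{p},\mathfrak{r}_j}\lspan{\mathfrak{r}_i,\mathfrak{r}_j}$, the scalar coefficient is $\tfrac{1}{\lspan{\mathfrak{p},\mathfrak{r}_j}}$; substituting $\mathfrak{m}_{ij}=\lspan{\mathfrak{p},\mathfrak{r}_j}\mathfrak{r}_i-\lspan{\mathfrak{p},\mathfrak{r}_i}\mathfrak{r}_j$ gives $\sigma_{\mathfrak{m}_{ij}}(\mathfrak{r}_i)=\mathfrak{r}_i - \mathfrak{r}_i + \tfrac{\lspan{\mathfrak{p},\mathfrak{r}_i}}{\lspan{\mathfrak{p},\mathfrak{r}_j}}\mathfrak{r}_j$, which is the sought-after scalar multiple of $\mathfrak{r}_j$. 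By symmetry $\sigma_{\mathfrak{m}_{ij}}(\mathfrak{r}_j)$ is a multiple of $\mathfrak{r}_i$, consistent with involutivity.

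Finally, for the transport statement: let $f$ be an envelope with $f_i = \spann{s_{ij}, r_i}$ and $f_j = \spann{s_{ij},r_j}$, where $s_{ij}=f_i\cap f_j$ is the shared curvature sphere. Since $s_{ij}$ is in oriented contact with both $r_i$ and $r_j$, one computes $\lspan{\mathfrak{s}_{ij},\mathfrak{m}_{ij}} = \lspan{\mathfrak{p},\mathfrak{r}_j}\lspan{\mathfrak{s}_{ij},\mathfrak{r}_i} - \lspan{\mathfrak{p},\mathfrak{r}_i}\lspan{\mathfrak{s}_{ij},\mathfrak{r}_j}=0$, so $\sigma_{\mathfrak{m}_{ij}}$ fixes $s_{ij}$; combined with $\sigma_{\mathfrak{m}_{ij}}(\mathfrak{r}_i)\in\lspan{\mathfrak{r}_j}$ this gives $\sigma_{\mathfrak{m}_{ij}}(f_i) = \spann{s_{ij}, r_j} = f_j$, exactly as in the proof of Lemma~\ref{transport_contact}. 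Then, because $\sigma_{\mathfrak{m}_{ij}}$ is a M\"obius transformation, it sends the point sphere $p_i = f_i\cap p^\perp$ to a point sphere in $f_j$, which must be $p_j$; hence adjacent point spheres of the point sphere envelope correspond. The argument is essentially a sequence of bilinear-form identities, so there is no serious obstacle; the only point requiring a little care is the non-degeneracy of $\mathfrak{m}_{ij}$ (equivalently, the exclusion of the planar cases and the use of the R-congruence hypothesis that $r_i\not\perp r_j$), which I would flag explicitly rather than suppress.
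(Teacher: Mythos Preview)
Your proof is correct and complete. The paper itself states Proposition~\ref{prop_moebius_inversion_edge} without proof, so there is nothing to compare against; your direct verification via bilinear-form identities is exactly the natural argument given the tools the paper has set up, and indeed the claim $\sigma_{\mathfrak{m}_{ij}}(\mathfrak{r}_i)\in\lspan{\mathfrak{r}_j}$ together with the transport of contact elements is a special case of Lemma~\ref{lem_inversion_family} (i) and (iii), since $\mathfrak{m}_{ij}$ is, up to scale, of the form $\mathfrak{r}_i-\lambda\mathfrak{r}_j$.

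One small terminological slip: the condition $\lspan{\mathfrak{p},\mathfrak{r}_i}=0$ means that $r_i$ is a \emph{point sphere} in the chosen M\"obius geometry, not a plane; so the nondegeneracy caveat should exclude the case where an R-sphere is a point sphere rather than a plane. This does not affect the argument.
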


\subsection{Construction of discrete R-congruences}
\label{subsection_construction}
Discrete R-congruences can be constructed from two arbitrary initial curves of spheres defined on two intersecting coordinate lines $\one{I}=(\one{\mathcal{V}}, \one{\mathcal{E}})$ and $\two{I}=(\two{\mathcal{V}}, \two{\mathcal{E}})$ of a quadrilateral cell complex $\mathcal{G}$ by the following iteration: 
\begin{itemize}
\item fix two initial sphere curves $\one{c}: \one{\mathcal{V}} \rightarrow \mathbb{P}(\mathcal{L})$ and $\two{c}: \two{\mathcal{V}} \rightarrow \mathbb{P}(\mathcal{L})$ that intersect at one vertex
\item choose for each face $(ijkl)$ that contains an edge $(ij)$ of the curve $\one{c}$ a Lie inversion that maps the prescribed spheres $c_i$ and $c_j$ onto each other; for each face this amounts to the choice of a $\lambda \in \mathbb{R}$:
\begin{equation*}
\one{n}_{(ijkl)}:=\one{\mathfrak{c}}_i - \lambda\one{\mathfrak{c}}_j
\end{equation*}
\item starting at a face $(ijkl)$ with three prescribed spheres $\one{c}_i$, $\one{c}_j$ and $\two{c}_l$, we complete the face by defining
\begin{equation*}
\mathfrak{r}_k:=\one{\sigma}_{(ijkl)}(\two{\mathfrak{c}}_l)
\end{equation*}
\item iteratively this procedure gives a coordinate ribbon of a discrete R-con\-gruence including the spheres of the curve $\one{c}$ and two spheres of the curve $\two{c}$
\item to obtain the next coordinate ribbon we choose again suitable Lie inversions along the just constructed sphere curve and proceed as described above.
\end{itemize}

\begin{remark*}
  The choice of the parameter $\lambda$ in the construction above is equivalent to choosing the cross-ratios for the quadrilaterals. The relation is given by Lemma~\ref{lem_cr_J}:
  \begin{align*}
    cr(\mathfrak{c}_i, \mathfrak{c}_l, \one{\sigma}_{(ijkl)}(\mathfrak{c}_l), \one{\sigma}_{(ijkl)}(\mathfrak{c}_i))
    = \frac
  {\lspan{\mathfrak{c}_i, \mathfrak{c}_l}}
    {\lspan{\mathfrak{c}_i, \mathfrak{c}_l} - \lambda\lspan{\mathfrak{c}_j, \mathfrak{c}_l}}\,.
  \end{align*}
\end{remark*}
\medskip

Conversely, given a nowhere umbilic discrete Legendre map $f: \mathcal{V} \rightarrow \mathbb{P}(\mathbb{R}^{4,2})$, that is, opposite curvature spheres do not coincide, then any choice of spheres in the contact elements along two intersecting coordinate lines uniquely determines a discrete R-congruence. The construction relies on the following simple observation:

\begin{lem}
  \label{lem:r_congruence_from_legendre}
Given four contact elements $(f_i, f_j, f_k, f_l)$ of an elementary quadrilateral of a nowhere umbilic discrete Legendre map and three spheres $r_i \in f_i$, $r_j \in f_j$ and $r_l \in f_l$, then there exists a unique sphere $r_k \in f_k$ such that $(r_i, r_j, r_k, r_l)$ provides a quadrilateral of a discrete R-congruence.
\end{lem}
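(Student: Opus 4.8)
\noindent\textit{Proof strategy.}
The plan is to reduce the statement to an incidence of a plane and a line inside the projective $3$-space spanned by the four curvature spheres of the face. Write $s_{ij},s_{jk},s_{kl},s_{li}$ for the curvature spheres on the edges of $(ijkl)$, so that $f_i=\spann{s_{ij},s_{li}}$, $f_j=\spann{s_{ij},s_{jk}}$, $f_k=\spann{s_{jk},s_{kl}}$ and $f_l=\spann{s_{kl},s_{li}}$; since two curvature spheres lying in a common contact element are in oriented contact, the ``adjacent'' inner products $\langle\mathfrak s_{ij},\mathfrak s_{jk}\rangle$, $\langle\mathfrak s_{jk},\mathfrak s_{kl}\rangle$, $\langle\mathfrak s_{kl},\mathfrak s_{li}\rangle$, $\langle\mathfrak s_{li},\mathfrak s_{ij}\rangle$ all vanish. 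I use the non-degeneracy hypothesis in the strong form that the four curvature spheres are linearly independent, hence span a projective $3$-space $W$ (geometrically, the face is not a piece of a Dupin cyclide), together with the genericity that $r_i,r_j,r_l$ are pairwise not in oriented contact and none of them is a curvature sphere. The first two of these are in fact necessary for the conclusion: a linear system contains no two distinct spheres in oriented contact, and --- as the argument will make clear --- uniqueness fails the moment $W$ degenerates to a plane.

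Because $r_i,r_j,r_l$ are pairwise non-contact they are linearly independent, so $V:=\spann{r_i,r_j,r_l}$ is a projective plane; as it contains three pairwise non-orthogonal isotropic directions, its signature is $(2,1)$ or $(1,2)$, whence $V\cap\mathbb P(\mathcal L)$ is already a linear system. From $r_i\in f_i$, $r_j\in f_j$, $r_l\in f_l$ we get $V\subset W$, and also $f_k\subset W$. Inside $W$ a plane and a line always meet, so it remains to see that $f_k\not\subset V$: if $f_k\subset V$ then $s_{jk},s_{kl}\in V$, hence $f_j=\spann{r_j,s_{jk}}\subset V$ and $f_l=\spann{r_l,s_{kl}}\subset V$ (using that $r_j,r_l$ are not curvature spheres), hence $s_{ij},s_{li}\in V$ and thus $W\subset V$ --- absurd. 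Therefore $f_k\cap V$ is a single sphere $r_k$; it lies in $f_k$, it is distinct from $r_i,r_j,r_l$ (otherwise two of the contact elements would have to meet in one of the given spheres, which the non-degeneracy forbids), and $r_i,r_j,r_k,r_l$ span the plane $V$, so they lie in the linear system $V\cap\mathbb P(\mathcal L)$, which is unique since $V$ is the only plane through $r_i,r_j,r_l$. This gives existence, and uniqueness is immediate: any admissible sphere lies in $f_k\cap\spann{r_i,r_j,r_l}=f_k\cap V=\{r_k\}$.

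The main obstacle is not a computation but pinning down the right non-degeneracy hypotheses --- above all that $W$ is a genuine $\mathbb P^3$, which is precisely what yields $f_k\not\subset V$ and hence makes $r_k$ both an honest intersection point and unique --- and checking that these are the assumptions one is entitled to assume for a nowhere umbilic discrete Legendre map. As an alternative, more in the spirit of Proposition~\ref{prop_inversions_for_r}, one can construct $r_k$ by Lie inversions: writing $\mathfrak r_i=a\mathfrak s_{ij}+a'\mathfrak s_{li}$, $\mathfrak r_j=b\mathfrak s_{ij}+b'\mathfrak s_{jk}$, $\mathfrak r_l=c\mathfrak s_{kl}+c'\mathfrak s_{li}$ (all coefficients nonzero), the vectors $\one{\mathfrak n}:=\mathfrak r_i-\tfrac{a}{b}\mathfrak r_j=a'\mathfrak s_{li}-\tfrac{ab'}{b}\mathfrak s_{jk}$ and $\two{\mathfrak n}:=\mathfrak r_i-\tfrac{a'}{c'}\mathfrak r_l=a\mathfrak s_{ij}-\tfrac{a'c}{c'}\mathfrak s_{kl}$ lie on the lines $\spann{s_{jk},s_{li}}$ and $\spann{s_{ij},s_{kl}}$; since every cross product between these two pairs of curvature spheres vanishes, $\langle\one{\mathfrak n},\two{\mathfrak n}\rangle=0$, so the Lie inversions $\one\sigma:=\sigma_{\one{\mathfrak n}}$ and $\two\sigma:=\sigma_{\two{\mathfrak n}}$ commute (Lemma~\ref{lem_inversions_commute}). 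By construction $\one\sigma(r_i)=r_j$ and $\two\sigma(r_i)=r_l$, while $\one\sigma$ fixes $s_{ij}$ and $s_{kl}$ and swaps $s_{jk}\leftrightarrow s_{li}$, so $\one\sigma(f_l)=f_k$; hence $r_k:=\one\sigma(r_l)=\two\sigma(r_j)$ lies in $f_k$, and $(r_i,r_j,r_k,r_l)$ lies in a common linear system by Lemma~\ref{lem_inversion_lin_system}.
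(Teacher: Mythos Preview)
Your argument is correct. Your primary route---intersecting the plane $V=\spann{r_i,r_j,r_l}$ with the line $f_k$ inside the $\mathbb P^3$ spanned by the four curvature spheres---is genuinely different from the paper's. The paper instead appeals directly to Lemma~\ref{lem_unique_inversion_four_spheres}: the contact elements $f_i$ and $f_l$ share $s_{il}$, so there is a unique Lie inversion $\sigma$ with $\sigma(r_i)=r_l$ and $\sigma(s_{ij})=s_{kl}$, and one sets $r_k:=\sigma(r_j)\in\spann{s_{jk},s_{kl}}=f_k$. This $\sigma$ is exactly your $\two\sigma$ (its complex lies on $\spann{r_i,r_l}\cap\spann{s_{ij},s_{kl}}$), so your ``alternative'' paragraph is in fact the paper's proof, only you also build the companion inversion $\one\sigma$ and verify commutativity. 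What each approach buys: the incidence argument is pleasingly elementary and makes uniqueness immediate, but it leans on the stronger hypothesis that the four curvature spheres are linearly independent---strictly more than the paper's ``nowhere umbilic'' (opposite curvature spheres distinct); the Lie-inversion route needs instead that $s_{ij}$ and $s_{kl}$ are not in oriented contact (so that $\lspan{\two{\mathfrak n},\two{\mathfrak n}}\neq 0$), and it meshes seamlessly with the machinery of Proposition~\ref{prop_inversions_for_r} that drives the rest of the paper.
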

\begin{proof}
By Lemma \ref{lem_unique_inversion_four_spheres}, there exists a unique Lie inversion $\sigma$ satisfying
\begin{equation*}
\sigma(\mathfrak{r}_i)=\mathfrak{r}_l \ \text{ and } \ \sigma(\mathfrak{s}_{ij})=\mathfrak{s}_{kl}.
\end{equation*}
Then, $\mathfrak{r}_k:=\sigma(\mathfrak{r}_j) \in \spann{\mathfrak{s}_{jk}, \mathfrak{s}_{kl}}$ provides the sought-after sphere that completes the elementary quadrilateral of a discrete R-congruence. 
\end{proof}

As a consequence of this Lemma, we obtain the following construction for discrete R-congruences of a discrete Legendre map: we fix two intersecting coordinate lines $\one{I}=(\one{\mathcal{V}}, \one{\mathcal{E}})$ and $\two{I}=(\two{\mathcal{V}}, \two{\mathcal{E}})$ of the quadrilateral cell complex $\mathcal{G}$ and suppose that $\one{c}: \one{\mathcal{V}} \rightarrow \mathbb{P}(\mathcal{L})$ and $\two{c}: \two{\mathcal{V}} \rightarrow \mathbb{P}(\mathcal{L})$ are chosen such that
\begin{equation*}
\one{c}_i \in f_i \text{ for any } i \in \one{\mathcal{V}} \ \text{and } \ \one{c}_j \in f_j \text{ for any } j \in \two{\mathcal{V}}.
\end{equation*}
Then, by Lemma~\ref{lem:r_congruence_from_legendre} above, these choices uniquely determine a discrete R-congruence of the discrete Legendre map $f$. 
This is another instance of the interplay between line congruences and Q-nets as studied in~\cite{DoliwaSantiniManas:2000:TransformationsOfQnets, bobenko_schief-discrete_line_complexes}.



\section{Special discrete R-congruences}\label{section_special_congr}

\noindent This section is devoted to discrete R-congruences with special properties. We discuss their geometry and consequences for the corresponding Ribaucour families.

\subsection{Discrete R-congruences in a fixed linear sphere complex}\label{subsection_fixed_complex}

Suppose that $r: \mathcal{V} \rightarrow \mathbb{P}(\mathcal{L})$ is a discrete R-congruence lying in a fixed linear sphere complex determined by $a \in \mathbb{P}(\mathbb{R}^{4,2})$, that is, $\lspan{\mathfrak{r}_i, \mathfrak{a}}=0$ for all $i \in \mathcal{V}$. Since the spheres all lie in the complex, so do $\one{n}$ and $\two{n}$ and the linear sphere complex $a^\perp$ is invariant under $\one{\sigma}$ and $\two{\sigma}$.
Depending on the type of the linear sphere complex we obtain three geometrically different situations.

\smallskip

\subsubsection*{Hyperbolic complex}
\noindent In the case of a {hyperbolic} linear sphere complex we can fix $a \in \mathbb{P}(\mathbb{R}^{4,2})$, $\lspan{\mathfrak{a}, \mathfrak{a}} < 0$, to be the point sphere complex and consider the M\"obius geometry modelled on $\lspan{\mathfrak{a}}^\perp$. Then, on each quadrilateral the R-spheres become point spheres lying in a linear system with signature $(2,1)$. Therefore, the R-spheres are curvature spheres of a Dupin cyclide consisting of point spheres and the discrete R-congruence becomes a circular net. In this case, the Lie inversions $\one{\sigma}$ and $\two{\sigma}$ become M\"obius inversions mapping the point spheres of the circular net onto each other.

For each circular net there exists a 2-parameter choice of contact elements that yield principal contact element nets. This ambiguity in the choice of the contact elements is reflected in the Ribaucour family of $r$. 

\subsubsection*{Parabolic complex} 
If $a \in \mathbb{P}(\mathcal{L})$ determines a {parabolic} linear sphere complex, then all R-spheres of the congruence are in oriented contact with the sphere determined by $a$.  
According to Corollary \ref{cor_initial_contact_el}, the choice of an initial contact element determines a unique envelope. 
Let us choose a particular initial contact element at the sphere $r_i$ given by $f_i:=\spann{r_i, a}$. Since the sphere determined by $a$ is preserved by the Lie inversions  $\one{\sigma}$ and $\two{\sigma}$, this envelope is totally umbilic, that is, all curvature spheres of the discrete Legendre map coincide. 

Moreover, observe that, for any projection to a M\"obius geometry, the point sphere map of a totally umbilic envelope lies on a fixed sphere, which, in this case, is the sphere determined by $a$. 
Hence, we deduce the following classification of discrete R-congruences with spheres in a fixed parabolic complex:

\begin{corollary}\label{cor_parabolic_complex}
A discrete R-congruence lies in a fixed parabolic linear sphere complex if and only if its Ribaucour family contains a totally umbilic envelope.
\end{corollary}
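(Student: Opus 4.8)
\textbf{Proof proposal for Corollary \ref{cor_parabolic_complex}.}

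The plan is to prove both implications separately, using the machinery already developed in Subsection \ref{subsection_fixed_complex} for one direction and a reconstruction argument for the other. For the forward implication, suppose $r:\mathcal{V}\to\mathbb{P}(\mathcal{L})$ lies in the fixed parabolic complex $a^\perp$ with $\mathfrak{a}\in\mathcal{L}$. As already observed in the excerpt, picking the initial contact element $f_i:=\spann{r_i,a}$ at some vertex $i$ and transporting it via the inversions $\one{\sigma},\two{\sigma}$ (which preserve $a^\perp$, and in fact fix the sphere $a$ itself since $\mathfrak{a}\perp\one{\mathfrak{n}},\two{\mathfrak{n}}$) yields an envelope all of whose contact elements contain $a$. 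Hence every curvature sphere $s_{ij}=f_i\cap f_j$ is forced to be $a$ (it is the unique sphere shared by $\spann{r_i,a}$ and $\spann{r_j,a}$, since $r_i\neq r_j$ are not in oriented contact so $\spann{r_i,r_j}\neq\spann{r_i,a}$), so the envelope is totally umbilic and lies in the Ribaucour family.

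For the converse, assume the Ribaucour family of $r$ contains a totally umbilic envelope $f$, so there is a single sphere $s\in\mathbb{P}(\mathcal{L})$ with $s_{ij}=s$ for every edge $(ij)$; in particular $s\in f_i$ for all $i$. Fix any point sphere complex $\mathfrak{p}$ with $\lspan{\mathfrak{p},\mathfrak{p}}=-1$; as remarked in Subsection \ref{subsection_moebius}, the point sphere map $p_i:=f_i\cap p^\perp$ of a totally umbilic envelope lies on a fixed sphere — indeed, since the circular net $p$ has all curvature spheres equal, each elementary quadrilateral $(p_i,p_j,p_k,p_l)$ lies on the single sphere cut out by $s$, and connectedness of $\mathcal{G}$ propagates this to a globally fixed sphere, call it $t$. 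The claim is then that $r$ lies in the parabolic complex determined by $a:=s$. To see this, note $r_i\in f_i=\spann{s,p_i}$ for every $i$, so $\mathfrak{r}_i=\alpha_i\mathfrak{s}+\beta_i\mathfrak{p}_i$ for scalars; pairing with $\mathfrak{s}$ and using $\lspan{\mathfrak{s},\mathfrak{s}}=0$, $\lspan{\mathfrak{p}_i,\mathfrak{s}}=0$ (as $p_i\in t$ and $s$ is the curvature sphere — both lie in the contact element $f_i$, hence are in oriented contact) gives $\lspan{\mathfrak{r}_i,\mathfrak{s}}=0$ for all $i$, i.e.\ $r\subset s^\perp$, a parabolic complex.

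The main obstacle is the converse direction, specifically justifying cleanly that the "fixed sphere" on which the point sphere envelope of a totally umbilic Legendre map lies is genuinely global rather than merely local to each coordinate ribbon, and that the curvature sphere $s$ is in oriented contact with that fixed point sphere on every contact element. Both reduce to the elementary fact that a circular net all of whose elementary quadrilaterals lie on spheres, with consecutive spheres sharing three points, must be planar-or-spherical (the shared circle of two adjacent quadrilaterals forces the two carrier spheres to coincide), combined with the observation that inside each contact element $f_i$ the sphere $s$ and the point sphere $p_i$ are automatically in oriented contact. Once these are in place, the computation $\lspan{\mathfrak{r}_i,\mathfrak{s}}=0$ is immediate. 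One should also remark that the argument shows $a\in\mathcal{L}$ is uniquely the common curvature sphere of the totally umbilic envelope, so the parabolic complex in question is canonically determined.
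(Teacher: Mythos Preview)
Your proof is correct, and the forward direction is exactly the paper's argument. For the converse, however, you take an unnecessary detour through the point sphere congruence and the carrier sphere $t$. You already note in your first sentence that $s\in f_i$ for all $i$; since also $r_i\in f_i$ by definition of an envelope, the two spheres lie in a common contact element and hence $\lspan{\mathfrak{r}_i,\mathfrak{s}}=0$ for every $i$. That is the whole converse: $r$ lies in the parabolic complex $s^\perp$. The Möbius projection, the point spheres $p_i$, the fixed sphere $t$, and the entire ``main obstacle'' paragraph are addressing a self-inflicted difficulty and can be deleted. (You effectively acknowledge this yourself in the parenthetical ``both lie in the contact element $f_i$, hence are in oriented contact''.) The paper leaves this converse implicit, but the intended argument is precisely this one-line observation.
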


\noindent Generically, any discrete Legendre map $f:\mathcal{V} \rightarrow \mathcal{Z}$ admits discrete R-congruences of this type: the intersection of a fixed parabolic complex $a \in \mathbb{P}(\mathcal{L})$ with the discrete Legendre map, $r_i:=f_i \cap a^\perp$, provides a Q-net in the Lie quadric $\mathbb{P}(\mathcal{L})$. 
Therefore, any discrete Legendre map admits totally umbilic Ribaucour transforms (see also \cite[Theorem 3.9]{rib_coord}).

\medskip

\subsubsection*{Elliptic complex}
Suppose that a discrete R-congruence $r: \mathcal{V} \rightarrow \mathbb{P}(\mathcal{L})$ lies in a fixed elliptic linear sphere complex $a^\perp$, $\lspan{\mathfrak{a}, \mathfrak{a}} > 0$, and fix an arbitrary R-sphere $r_{i_0}$ of the congruence. Then, there exists a 1-parameter family of contact elements $f_{i_0} \ni r_{i_0}$ with spheres that are also contained in the elliptic linear sphere complex $a^\perp$. Hence, by Lemma~\ref{transport_contact}, any envelope $f$ that is constructed from such an initial contact element $f_{i_0}$, satisfies $\lspan{\mathfrak{f}_i, \mathfrak{a}}=0$ for any $i \in \mathcal{V}$. Therefore, this envelope is spherical, that is, for any projection to a M\"obius geometry, the point sphere map lies on a fixed sphere. However, note that in this case the envelope is not totally umbilic.

Conversely, the contact elements of a spherical and not totally umbilic discrete Legendre map lie in a fixed elliptic linear complex. Thus, in summary, we have proven:

\begin{corollary}\label{cor_elliptic_complex}
A discrete R-congruence lies in a fixed elliptic linear sphere complex if and only if there exists a spherical and not totally umbilic envelope in the Ribaucour family.
\end{corollary}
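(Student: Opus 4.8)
\noindent The plan is to prove the two implications separately, in both directions exploiting the induced Lie inversions $\one\sigma,\two\sigma$ of Proposition~\ref{prop_inversions_for_r} together with the correspondence between envelopes and initial contact elements of Corollary~\ref{cor_initial_contact_el}.

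\smallskip
\noindent\emph{From the complex to the envelope.} First I would observe that if $\lspan{\mathfrak r_i,\mathfrak a}=0$ for all $i$ with $\lspan{\mathfrak a,\mathfrak a}>0$, then for each face the complexes $\one{\mathfrak n},\two{\mathfrak n}$ are differences of R-spheres by~\eqref{equ_coord_compl} and hence also lie in $a^\perp$, so that by the defining formula~\eqref{equ_lie_inversion} both $\one\sigma$ and $\two\sigma$ leave the subspace $a^\perp$ invariant. I then pick an initial contact element of the form $f_{i_0}=\spann{r_{i_0},t}$ with $t\in\mathbb P(\mathcal L)\cap a^\perp$ in oriented contact with $r_{i_0}$ --- these form a one-parameter family, since the contact elements through a fixed sphere inside the sub-Lie-geometry $a^\perp\cong\mathbb R^{3,2}$ do. By Lemma~\ref{transport_contact} every contact element of the envelope $f$ produced by $f_{i_0}$ is a word in $\one\sigma,\two\sigma$ applied to $f_{i_0}$, so $f_j\subset a^\perp$ for all $j$; projecting to any M\"obius geometry $p^\perp$, the point spheres $f_j\cap p^\perp$ then all lie on the fixed sphere $a^\perp\cap p^\perp$ (a genuine sphere because $\lspan{\mathfrak a,\mathfrak a}>0$), i.e.\ $f$ is spherical. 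To finish this direction I have to arrange that $f$ is not totally umbilic: a totally umbilic envelope has a common sphere $s_0$ in all its contact elements, hence $s_0$ in oriented contact with every R-sphere and $s_0\in f_{i_0}$; but the spheres of $a^\perp$ in oriented contact with every $r_i$ form at most a zero-dimensional set for a nondegenerate R-congruence, so only finitely many choices of $t$ are excluded and a generic one works.

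\smallskip
\noindent\emph{From the envelope to the complex.} Conversely, suppose the Ribaucour family contains a spherical, not totally umbilic envelope $f$ of $r$; fix a M\"obius geometry $p^\perp$ in which the point sphere map lies on a fixed sphere, $\lspan{\mathfrak p_i,\mathfrak S_0}=0$ for all $i$ with $\mathfrak S_0\in\mathcal L$. Since $f_i\cap\mathfrak p^\perp$ is spanned by the point sphere $p_i$ and $\lspan{\mathfrak p_i,\mathfrak S_0}=0$, the number $\mu_i:=\lspan{\mathfrak u,\mathfrak S_0}$ does not depend on the choice of $u\in f_i$ with $\lspan{\mathfrak u,\mathfrak p}=1$. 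The key point is that $\mu$ is constant: on an edge $(ij)$ the shared curvature sphere $s_{ij}=f_i\cap f_j$, normalised by $\lspan{\mathfrak s_{ij},\mathfrak p}=1$, lies in $f_i$ and in $f_j$ at once, so $\mu_i=\lspan{\mathfrak s_{ij},\mathfrak S_0}=\mu_j$; connectedness of $\mathcal G$ then gives $\mu_i\equiv\mu$. Now set $\mathfrak a:=\mathfrak S_0-\mu\,\mathfrak p$; a one-line check shows $\lspan{\mathfrak u,\mathfrak a}=0$ for every $u\in f_i$, so $f_i\subset a^\perp$ for all $i$. It remains to determine the type of $a$: it cannot be lightlike, for then $\mathfrak a$ would lie in the maximal isotropic plane $f_i$ for every $i$ and $f$ would be totally umbilic; and it cannot be timelike, for then $a^\perp$ would be a point sphere complex and could not contain the two-dimensional isotropic subspace $f_i$. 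Hence $a^\perp$ is elliptic, and $r_i\in f_i\subset a^\perp$ shows that $r$ lies in the fixed elliptic linear sphere complex $a^\perp$.

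\smallskip
\noindent I expect the heart of the argument to be the constancy of $\mu$ in the second implication: it says that the ``angle'' between the contact elements of a spherical envelope and the sphere carrying its point spheres is propagated unchanged by the shared curvature spheres, which is precisely what turns the M\"obius-geometric sphericality into the Lie-geometric statement that all contact elements lie in one linear sphere complex. Everything else is bookkeeping: in the first implication the only delicate point is the genericity argument ruling out the finitely many totally umbilic envelopes, and the type analysis of $a$ in both directions is immediate from the description of the three kinds of linear sphere complexes recalled in Section~\ref{section_prel}.
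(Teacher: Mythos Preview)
Your forward direction is exactly the paper's argument: since $\one{\mathfrak n},\two{\mathfrak n}\in a^\perp$, the face-wise Lie inversions preserve $a^\perp$, so an initial contact element inside $a^\perp$ propagates to a spherical envelope. Your added care in excluding totally umbilic envelopes is actually an improvement over the paper, which simply asserts ``note that in this case the envelope is not totally umbilic'' without justification; your dimension count is right in spirit, though it deserves one more sentence (the R-spheres span at least a $(2,1)$-plane, so the null directions in $a^\perp$ orthogonal to all of them are at most two, hence only finitely many initial choices are bad).

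For the converse the paper offers only a one-line assertion (``the contact elements of a spherical and not totally umbilic discrete Legendre map lie in a fixed elliptic linear complex''), whereas you give an explicit construction: the constant $\mu$ propagated along edges via the shared curvature spheres, and $\mathfrak a=\mathfrak S_0-\mu\,\mathfrak p$. This is a genuine fleshing-out of what the paper leaves implicit, and your type analysis of $a$ (ruling out parabolic via ``not totally umbilic'' and timelike via the maximal-isotropic dimension of $\mathbb R^{4,1}$) is clean. One small gap: the normalisation $\lspan{\mathfrak s_{ij},\mathfrak p}=1$ fails when the curvature sphere $s_{ij}$ is itself a point sphere; in that degenerate case $p_i=p_j$ but $f_i\neq f_j$, and the constancy of $\mu$ across that edge needs a separate word (or a blanket regularity hypothesis excluding point-sphere curvature spheres, which is standard in this context).
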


\noindent We remark that the special envelopes obtained in the Corollaries \ref{cor_parabolic_complex} and \ref{cor_elliptic_complex} are Ribaucour coordinates as discussed in \cite{rib_coord}.

\subsection{Discrete isothermic R-congruences}\label{section_iso}
A discrete $Q$-net in the Lie quadric $\mathbb{P}(\mathcal{L})$ is called \emph{isothermic} if any diagonal vertex star of the $Q$-net lies in a projective 3-dimensional subspace of $\mathbb{P}(\mathbb{R}^{4,2})$ that does not contain the four outer spheres of the vertex star. Equivalently, isothermicity of a $Q$-net is characterized by the existence of a Moutard lift (see \cite{ddg_book, discrete_omega, lin_weingarten_discrete}): 

\begin{defi}
A lift $\mu: \mathcal{V} \rightarrow \mathcal{L} \subset \mathbb{R}^{4,2}$ of a discrete Q-net $s: \mathcal{V} \rightarrow \mathbb{P}(\mathcal{L})$ is called a \emph{Moutard lift} if opposite diagonals are parallel, that is,
\begin{equation*}
\delta \mu_{ik} \ || \ \delta \mu_{jl},
\end{equation*}
where $\delta \mu_{ik}:= \mu_k - \mu_i$ and $\delta \mu_{jl}:= \mu_l - \mu_j$.
\end{defi}
\noindent If the Q-net is not degenerate, hence a discrete R-congruence in the sense of this paper, the special global choice of homogeneous coordinates of a Moutard lift gives rise to additional Lie inversions that diagonally swap the R-spheres on each quadrilateral:
\begin{prop}
A lift $\mu: \mathcal{V} \rightarrow \mathcal{L} \subset \mathbb{R}^{4,2}$ of a discrete R-congruence $r: \mathcal{V} \rightarrow \mathbb{P}(\mathcal{L})$ is a Moutard lift if and only if for each quadrilateral the Lie inversion $\sigma^\delta$ with respect to the linear sphere complex $\langle \delta\mu_{ik} \rangle^\perp$ diagonally interchanges the R-spheres:
\begin{equation*}
\sigma^\delta(\mu_i)=\mu_k \ \ \text{and } \ \ \sigma^\delta(\mu_j)=\mu_l.
\end{equation*}
\end{prop}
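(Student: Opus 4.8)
The plan is to prove both implications by exploiting the explicit formula \eqref{equ_lie_inversion} for a Lie inversion together with the defining property of a Moutard lift. Fix a quadrilateral $(ijkl)$ and write $\mathfrak{v}:=\delta\mu_{ik}=\mu_k-\mu_i$ and $\mathfrak{w}:=\delta\mu_{jl}=\mu_l-\mu_j$. Since $\mu$ is a lift into the light cone, $\lspan{\mu_i,\mu_i}=\lspan{\mu_k,\mu_k}=0$, so $\lspan{\mathfrak{v},\mathfrak{v}}=-2\lspan{\mu_i,\mu_k}$, and similarly $\lspan{\mathfrak{w},\mathfrak{w}}=-2\lspan{\mu_j,\mu_l}$; these are nonzero because the four spheres of a discrete R-congruence lie in a \emph{unique} linear system, hence are pairwise not in oriented contact. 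Thus $\sigma^\delta:=\sigma_{\mathfrak{v}}$ is a genuine Lie inversion. A direct computation from \eqref{equ_lie_inversion} gives
\begin{equation*}
\sigma_{\mathfrak{v}}(\mu_i)=\mu_i-\frac{2\lspan{\mu_i,\mathfrak{v}}}{\lspan{\mathfrak{v},\mathfrak{v}}}\mathfrak{v}
=\mu_i-\frac{2\lspan{\mu_i,\mu_k}}{-2\lspan{\mu_i,\mu_k}}\mathfrak{v}=\mu_i+\mathfrak{v}=\mu_k,
\end{equation*}
and symmetrically $\sigma_{\mathfrak{v}}(\mu_k)=\mu_i$; note this uses only that $\mathfrak{v}$ connects two light-cone points and needs no assumption on $\mathfrak{w}$.

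For the forward direction, assume $\mu$ is a Moutard lift, so $\mathfrak{w}=\mathfrak{w}_0\,\mathfrak{v}$... no: $\mathfrak{w}\parallel\mathfrak{v}$ as directions only if the diagonals are parallel; what the definition gives is $\delta\mu_{ik}\parallel\delta\mu_{jl}$, i.e. $\mathfrak{w}=\lambda\mathfrak{v}$ for some scalar $\lambda$. Then $\langle\delta\mu_{jl}\rangle^\perp=\langle\mathfrak{v}\rangle^\perp$, so the Lie inversion with respect to $\langle\delta\mu_{ik}\rangle^\perp$ is, up to the (irrelevant) rescaling $\mathfrak{w}=\lambda\mathfrak{v}$, literally the same map as the one computed from $\mathfrak{w}$; applying the computation above with $\mathfrak{w}$ in place of $\mathfrak{v}$ yields $\sigma^\delta(\mu_j)=\mu_l$ and $\sigma^\delta(\mu_l)=\mu_j$. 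This establishes the four stated identities. For the converse, suppose $\sigma^\delta(\mu_j)=\mu_l$ where $\sigma^\delta=\sigma_{\mathfrak{v}}$. Then $\mu_l-\mu_j=-\tfrac{2\lspan{\mu_j,\mathfrak{v}}}{\lspan{\mathfrak{v},\mathfrak{v}}}\mathfrak{v}$, i.e. $\delta\mu_{jl}$ is a scalar multiple of $\mathfrak{v}=\delta\mu_{ik}$, which is exactly the Moutard condition $\delta\mu_{ik}\parallel\delta\mu_{jl}$.

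The only subtlety — and the place to be slightly careful rather than a genuine obstacle — is the non-degeneracy needed to ensure $\sigma^\delta$ is well defined: one must check $\lspan{\mathfrak{v},\mathfrak{v}}\neq 0$, equivalently $\lspan{\mu_i,\mu_k}\neq 0$, equivalently that $r_i$ and $r_k$ are not in oriented contact. This is guaranteed precisely by the standing hypothesis that $r$ is a discrete R-congruence (the four spheres of each quadrilateral span a \emph{unique} linear system, so no two of them — diagonal or otherwise — are in oriented contact), which is why the proposition is stated for R-congruences rather than arbitrary isothermic $Q$-nets. With this observed, both directions reduce to the one-line computation above, and the proof is complete.
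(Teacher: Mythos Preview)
Your proof is correct and follows essentially the same approach as the paper: both directions boil down to the observation that $\sigma_{\mathfrak{v}}$ sends $\mu_j$ to $\mu_l$ precisely when $\mu_l-\mu_j$ is a scalar multiple of $\mathfrak{v}=\delta\mu_{ik}$. Your write-up is more explicit than the paper's (you actually carry out the computation $\sigma_{\mathfrak{v}}(\mu_i)=\mu_k$ and justify the non-degeneracy $\lspan{\mathfrak{v},\mathfrak{v}}\neq 0$ from the R-congruence hypothesis), but the argument is the same; do clean up the stream-of-consciousness passage (``$\mathfrak{w}=\mathfrak{w}_0\,\mathfrak{v}$\ldots\ no:'') before submitting.
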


\begin{proof}
Suppose that $\mu$ is a Moutard lift, then the linear sphere complex is determined by
\begin{equation*}
\langle \delta\mu_{ik} \rangle = \langle \delta\mu_{jl} \rangle 
\end{equation*}
and we therefore obtain
\begin{align*}
\sigma^\delta(\mu_i)=\mu_k \ \ \text{and } \ \ \sigma^\delta(\mu_j)=\mu_l.
\end{align*}

Conversely, if we have
\begin{align*}
\mu_k &= \mu_i - \frac{2\lspan{\mu_i,\delta\mu_{ik}}}{\lspan{\delta\mu_{ik},\delta\mu_{ik}}}\delta\mu_{ik},
\\\mu_l &= \mu_j - \frac{2\lspan{\mu_j,\delta\mu_{ik}}}{\lspan{\delta\mu_{ik},\delta\mu_{ik}}}\delta\mu_{ik},
\end{align*}
it follows that $\delta \mu_{ik} \ || \ \delta \mu_{jl}$ and $\mu$ is indeed a Moutard lift.
\end{proof}

\begin{figure}[tb]\label{fig_sigma_delta}
  \begin{overpic}[width=7cm]{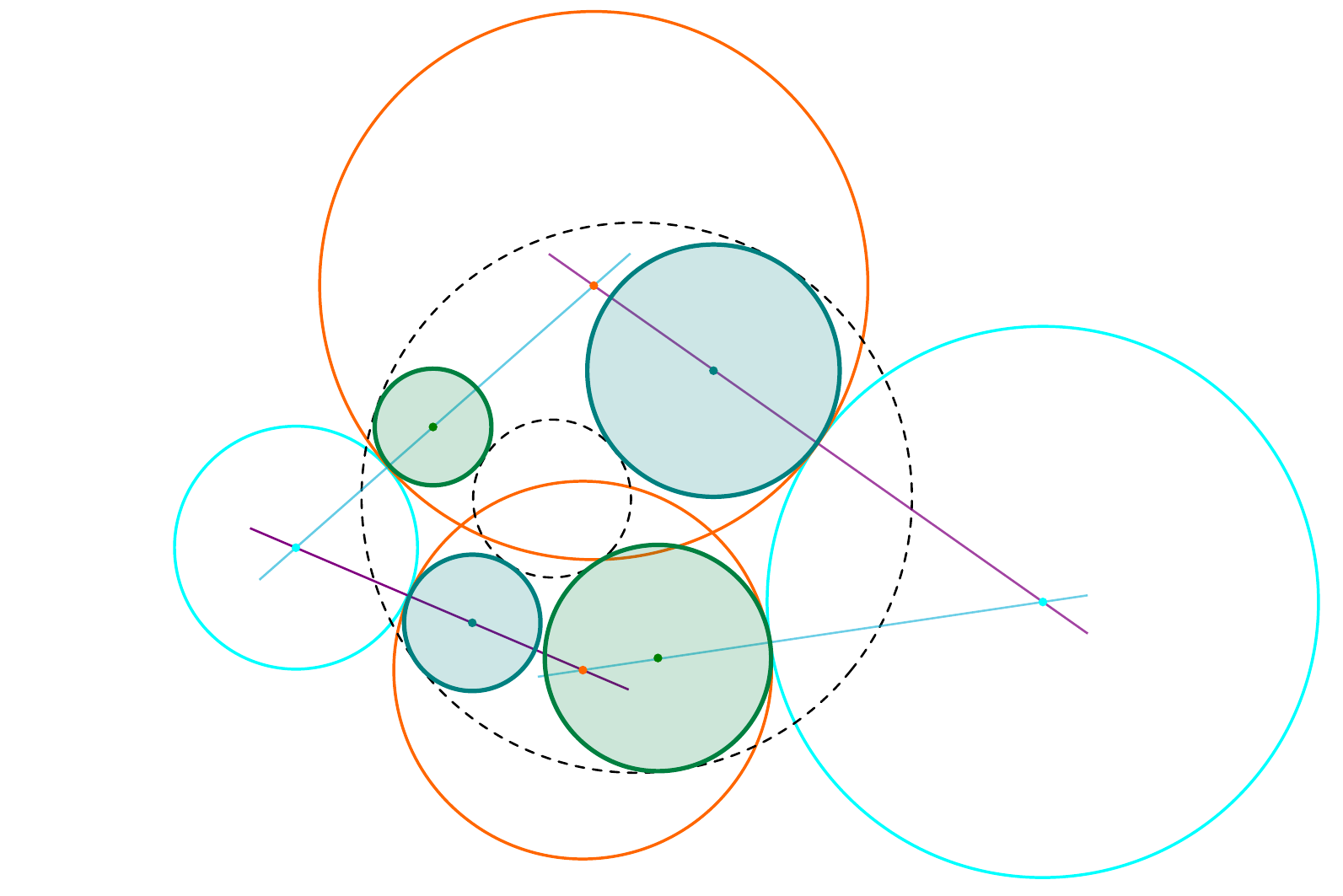}
    \put(33,17){$r_i$}
    \put(48,14){$r_j$}
    \put(52,35){$r_k$}
    \put(30,34){$r_l$}
    \put(9,31){$s_{il}$}
    \put(90,40){$s_{jk}$}
    \put(30,3){$s_{ij}$}
    \put(20,55){$s_{kl}$}
  \end{overpic}
  \caption{
    The Lie inversion $\sigma^\delta$ interchanges opposite R-spheres $r_i \leftrightarrow r_k$ and $r_j \leftrightarrow r_l$, curvature spheres $s_{ij} \leftrightarrow s_{kl}$ and $s_{il} \leftrightarrow s_{jk}$, and the corresponding contact elements $f_i \leftrightarrow f_k$ and $f_j \leftrightarrow f_l$. 
  }
  \label{fig:diagonal_inversion}
\end{figure}

\noindent The interaction of these Lie inversions $\sigma^\delta$ induced by a Moutard lift with the geometric data of the envelopes is illustrated in Figure \ref{fig_sigma_delta}:

\begin{corollary}
  The Lie inversions $\sigma^\delta$ map facewise opposite curvature spheres onto each other and diagonally interchanges the contact elements of an enveloping discrete Legendre map (see Fig.~\ref{fig:diagonal_inversion}):
\begin{align*}
\sigma^\delta(\mathfrak{s}_{ij})=\sigma^\delta(\mathfrak{s}_{kl}), \ \ \sigma^\delta(\mathfrak{s}_{jk})=\sigma^\delta(\mathfrak{s}_{il}) \ \ \text{and} \ \ 
 \sigma^\delta(\mathfrak{f}_i)=\sigma^\delta(\mathfrak{f}_{k}), \ \ \sigma^\delta(\mathfrak{f}_{j})=\sigma^\delta(\mathfrak{f}_{l}).
\end{align*}
\end{corollary}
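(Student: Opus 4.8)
The plan is to exploit the previous proposition, which gives that a Moutard lift is exactly the situation in which $\sigma^\delta(\mu_i)=\mu_k$ and $\sigma^\delta(\mu_j)=\mu_l$, and then to propagate this diagonal swap through the curvature spheres and contact elements using nothing more than the definition of the curvature sphere as the intersection of adjacent contact elements and linearity of the Lie inversion. First I would record the trivial observation that, as a (projective) involution, $\sigma^\delta$ also sends $\mu_k \mapsto \mu_i$ and $\mu_l \mapsto \mu_j$; equivalently it interchanges the contact elements $f_i \leftrightarrow f_k$ and $f_j \leftrightarrow f_l$ once we know it interchanges the corresponding point representatives $r_i\leftrightarrow r_k$, $r_j \leftrightarrow r_l$ — but for the contact-element statement one needs slightly more, namely that $\sigma^\delta$ also matches up a second sphere in each contact element. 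This is where the curvature spheres come in, so the curvature-sphere identities should be proved first.

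For the curvature spheres: the curvature sphere $s_{ij}=f_i\cap f_j$ lies in both $f_i=\spann{r_i,\ldots}$ and $f_j$, so $\mathfrak{s}_{ij}$ is a linear combination of $\mathfrak{r}_i$ and $\mathfrak{r}_j$ (and of $\mathfrak{r}_j,\mathfrak{r}_k$ along the other edge, etc.). Since $\sigma^\delta$ is linear and sends $\mathfrak{r}_i\mapsto\mu_k$-direction and $\mathfrak{r}_j\mapsto\mu_l$-direction, it carries $\spann{r_i,r_j}$ to $\spann{r_k,r_l}$; but $\sigma^\delta$ preserves oriented contact, so $\sigma^\delta(s_{ij})$ is in oriented contact with everything $s_{ij}$ was, in particular with the images of $f_i$ and $f_j$, which are $f_k$ and $f_l$. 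Hence $\sigma^\delta(s_{ij})\in f_k\cap f_l = s_{kl}$, giving $\sigma^\delta(\mathfrak{s}_{ij})=\mathfrak{s}_{kl}$ up to scale; the same argument with the pair $(jk)\leftrightarrow(il)$ gives $\sigma^\delta(\mathfrak{s}_{jk})=\mathfrak{s}_{il}$. (Note that the displayed equations in the corollary statement, written as $\sigma^\delta(\mathfrak{s}_{ij})=\sigma^\delta(\mathfrak{s}_{kl})$ etc., must be read as the non-redundant assertions $\sigma^\delta(\mathfrak{s}_{ij})=\mathfrak{s}_{kl}$ and $\sigma^\delta(\mathfrak{s}_{jk})=\mathfrak{s}_{il}$ — an apparent typo in the statement that I would silently correct, or flag, in the write-up.)

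For the contact elements: now $f_i=\spann{r_i,s_{ij}}$ (or equivalently $\spann{r_i,s_{il}}$), and we have just shown $\sigma^\delta(\mathfrak{r}_i)\in\langle\mu_k\rangle\subset f_k$ and $\sigma^\delta(\mathfrak{s}_{ij})=\mathfrak{s}_{kl}\in f_k$; since $r_i\ne s_{ij}$ (the congruence is nowhere umbilic / the face is nondegenerate) their images span a 2-dimensional isotropic subspace, which must therefore equal $f_k$. Hence $\sigma^\delta(f_i)=f_k$, and symmetrically $\sigma^\delta(f_j)=f_l$; involutivity gives the reverse assignments. I expect the only real subtlety — the "main obstacle" — to be bookkeeping: making sure that the representatives produced by $\sigma^\delta$ are genuinely nonproportional so that spans are 2-dimensional (this uses nondegeneracy of the quadrilateral, i.e. that opposite curvature spheres and the R-spheres are distinct), and being careful that the corollary's displayed formulas are interpreted correctly. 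Everything else is immediate from linearity of $\sigma^\delta$, its preservation of oriented contact (Lemma on Lie inversions), and the preceding proposition.
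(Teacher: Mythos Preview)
Your argument is circular. To obtain $\sigma^\delta(s_{ij})=s_{kl}$ you invoke that $\sigma^\delta(s_{ij})$ lies in ``the images of $f_i$ and $f_j$, which are $f_k$ and $f_l$''; but $\sigma^\delta(f_i)=f_k$ is exactly what you then derive \emph{from} the curvature-sphere identity in the next paragraph. Neither half stands on its own. There is also a false premise: $s_{ij}=f_i\cap f_j$ is \emph{not} a linear combination of $\mathfrak r_i$ and $\mathfrak r_j$, since the R-spheres $r_i,r_j$ are by definition not in oriented contact and $\spann{r_i,r_j}$ is not a contact element. What your argument actually establishes is only that $\sigma^\delta(s_{ij})$ is in oriented contact with $r_k$ and $r_l$; that cuts out a whole family of spheres, not the single sphere $s_{kl}\in f_k\cap f_l$.

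The paper gives no proof of this corollary, and in fact the gap cannot be closed. Using the Moutard relations one checks that $\one{\mathfrak n}=\mu_i-\lambda\mu_j=\mu_k-\lambda\mu_l$ and $\two{\mathfrak n}=\mu_i+\lambda\mu_l=\mu_k+\lambda\mu_j$ are fixed by $\sigma^\delta$, so $\one{\mathfrak n},\two{\mathfrak n},\delta\mu_{ik}$ are pairwise orthogonal and span the $3$-space $V$ containing the four R-spheres. Hence $\tau:=\two{\sigma}\circ\one{\sigma}\circ\sigma^\delta$ equals $-\mathrm{Id}$ on $V$ and $+\mathrm{Id}$ on $V^\perp$. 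Since $f_k=\one{\sigma}(\two{\sigma}(f_i))$ for any envelope, the claim $\sigma^\delta(f_i)=f_k$ is equivalent to $\tau(f_i)=f_i$, which forces $f_i=(f_i\cap V)\oplus(f_i\cap V^\perp)$; as $f_i\cap V=\spann{r_i}$, this means $f_i$ must contain a sphere of $V^\perp$, i.e., a sphere in oriented contact with all four R-spheres --- precisely the umbilic situation. For a generic envelope one therefore has $\sigma^\delta(f_i)\neq f_k$ and $\sigma^\delta(s_{ij})\neq s_{kl}$; the corollary (whose displayed formulas are in any case garbled, literally asserting $\sigma^\delta(\mathfrak s_{ij})=\sigma^\delta(\mathfrak s_{kl})$) appears to be in error.
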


\noindent A Moutard lift of a discrete isothermic R-congruence $r$ also defines the aforementioned Lie inversions $\one{\sigma}$ and $\two{\sigma}$: let $\mu$ denote the Moutard lift of $r$ and define the function $\lambda: \mathcal{F}\rightarrow \mathbb{R}$ by
\begin{equation*}
\lambda_{F}\, \delta \mu_{jl} = \delta \mu_{ik}
\end{equation*}
for a face $F=(ijkl)$.
Then, in terms of the homogeneous coordinates of a Moutard lift, the points $\one{n}$ and $\two{n}$ are given by
\begin{align*}
\one{\n}: &= \mu_i - \lambda_{F} \mu_j = \mu_k - \lambda_{F} \mu_l,
\\
\two{\n}: &= \mu_i + \lambda_{F} \mu_l = \mu_k + \lambda_{F} \mu_j.
\end{align*}

\noindent Moreover, since the homogeneous coordinates of a Moutard lift are global, these maps also induce an edge labelling: for any face $F = (ijkl)$, we define the edge function $e: \mathcal{E} \rightarrow \mathbb{R}$ by
\begin{align*}
\one{e}_{ij}=\one{e}_{kl}
&:=\frac{\lspan{\one{\mathfrak{n}}_F, \one{\mathfrak{n}}_F}}{2\lambda_F}= 
-\lspan{\mu_i, \mu_j} = -\lspan{\mu_k, \mu_l}
\quad \text{and} \\
\two{e}_{il}=\two{e}_{jk}
&:=-\frac{\lspan{\two{\mathfrak{n}}_F, \two{\mathfrak{n}}_F}}{2\lambda_F}=
\lspan{\mu_i, \mu_l} = \lspan{\mu_j, \mu_k}.
\end{align*}
Then, according to Lemma \ref{lem_cr_J}, the edge function $e$ factorizes the cross-ratio of the discrete isothermic R-congruence (cf.\,\cite[Lemma 3.5]{lin_weingarten_discrete}):
\begin{equation*}
cr(r_i, r_j, r_k, r_l)=
-\frac
{\lspan{\one{\mathfrak{n}}_F, \one{\mathfrak{n}}_F}}
{\lspan{\two{\mathfrak{n}}_F, \two{\mathfrak{n}}_F}}
=
\frac{e_{ij}^{(1)}}{e_{jk}^{(2)}}.
\end{equation*} 

\noindent Envelopes of discrete isothermic R-congruences belong to the special class of discrete $\Omega$-surfaces introduced in \cite{lin_weingarten_discrete}. Those are discrete Legendre maps enveloped by a pair of sphere congruences that admit K\"onigs dual lifts. In \cite{discrete_omega}, it is shown that K\"onigs dual lifts can be constructed even from one enveloping isothermic sphere congruence of a discrete Legendre map. Therefore, to summarize:

\begin{corollary}\label{cor_omega}
The Ribaucour family of a discrete isothermic R-congruence consists of discrete $\Omega$-surfaces.
\end{corollary}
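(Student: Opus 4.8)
The plan is to show that any envelope $f$ of a discrete isothermic R-congruence $r$ is a discrete $\Omega$-surface, which by definition means that $f$ is enveloped by a pair of sphere congruences admitting K\"onigs dual lifts. The key input is the result of \cite{discrete_omega} quoted in the excerpt: K\"onigs dual lifts can be constructed from \emph{one} enveloping isothermic sphere congruence of a discrete Legendre map. So the whole statement reduces to exhibiting, for a given envelope $f$ of $r$, an enveloping isothermic sphere congruence.

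The natural candidate is $r$ itself. First I would recall that $r$ envelops $f$ by hypothesis (that is precisely the definition of envelope: $r_i \in f_i$ for all $i$), and that $r$ is a $Q$-net in $\mathbb{P}(\mathcal{L})$ which, being a \emph{discrete isothermic} R-congruence, is isothermic as a $Q$-net in the Lie quadric — i.e.\ it admits a Moutard lift $\mu$, as set up just before the corollary. Thus $r$ is an enveloping sphere congruence of the discrete Legendre map $f$ that is isothermic, and invoking \cite{discrete_omega} directly yields a K\"onigs dual lift associated to $r$.

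The second half is to produce the \emph{other} enveloping sphere congruence required by the $\Omega$-surface definition together with its K\"onigs dual lift. Here I would use the curvature sphere congruences: for a generic (nowhere umbilic) envelope $f$ the two curvature sphere congruences $s^{(1)}, s^{(2)}$ are the classical second enveloping congruences, and the pair "$r$ together with a suitable second sphere congruence" is what the $\Omega$-condition asks for; alternatively one can appeal to the symmetric role of the two envelopes and note that the definition of $\Omega$-surface in \cite{lin_weingarten_discrete} only requires the \emph{existence} of such a pair, which \cite{discrete_omega} guarantees once one isothermic enveloping congruence is present. So the argument is essentially: isothermic R-congruence $\Rightarrow$ Moutard lift on $r$ $\Rightarrow$ ($r$ is an isothermic enveloping sphere congruence of every envelope $f$) $\Rightarrow$ (by \cite{discrete_omega}) $f$ is a discrete $\Omega$-surface; since $f$ ranges over the whole Ribaucour family, the family consists of $\Omega$-surfaces.

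The main obstacle — really the only subtlety — is matching conventions: one must check that "isothermic $Q$-net in the Lie quadric" in the sense of this paper (diagonal vertex stars in a projective $3$-space / existence of a Moutard lift) coincides with the notion of "isothermic enveloping sphere congruence" used as the hypothesis of the cited construction in \cite{discrete_omega}, and that the nondegeneracy assumptions there (e.g.\ nowhere umbilic, or the $Q$-net spheres not in oriented contact) are met by a discrete R-congruence. Once the definitions are aligned, the proof is a one-line citation, so I would write it as: "By definition the Moutard lift $\mu$ exhibits $r$ as an isothermic sphere congruence enveloping each $f$ in the Ribaucour family; applying \cite[...]{discrete_omega} to this enveloping congruence produces the required K\"onigs dual lifts, so $f$ is a discrete $\Omega$-surface."
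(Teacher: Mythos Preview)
Your proposal is correct and follows essentially the same approach as the paper: the paper's argument is just the paragraph preceding the corollary, which observes that $r$ is an isothermic sphere congruence enveloping each $f$ in the Ribaucour family and then invokes the result from \cite{discrete_omega} that one enveloping isothermic sphere congruence suffices to produce the K\"onigs dual lifts making $f$ an $\Omega$-surface. Your extra discussion about producing the second sphere congruence is unnecessary --- the cited result already handles that --- but your final one-line summary matches the paper's reasoning exactly.
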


\subsection{Multi-R-congruences}

If we impose the characteristic property of a discrete net not only on each quadrilateral but also on arbitrary parameter rectangles, we obtain a so-called multi-net. Various multi-nets were investigated in \cite{multinet}.

In particular, a multi-Q-net is a Q-net such that arbitrary parameter rectangles are planar. In this subsection we are interested in multi-Q-nets in the Lie-quadric:

\begin{defi}
A \emph{discrete multi-R-congruence} is a discrete R-congruence $r: \mathcal{V}\rightarrow \mathbb{P}(\mathcal{L})$ such that also the spheres of any rectangular parameter quadrilateral lie in a unique linear system.
\end{defi}

\noindent Special examples of multi-R-congruences are given by the point sphere maps of multi-circular nets; hence, geometrically, point sphere maps of discrete isothermic channel surfaces (see \cite{multinet, discrete_channel}). 

According to \cite[Theorem 2.4]{multinet}, any discrete multi-R-congruence $r:\mathcal{V} \rightarrow \mathbb{P}(\mathcal{L})$ admits a global choice of homogeneous coordinates $\mathfrak{r}:\mathcal{V} \rightarrow \mathcal{L} \subset \mathbb{R}^{4,2}$ such that we obtain  
\begin{equation}\label{equ_cond_multi_lift}
\mathfrak{r}_i - \mathfrak{r}_j + \mathfrak{r}_{j'} - \mathfrak{r}_{i'}=0
\end{equation} 
for any parameter rectangle $(ijj'i')$. Thus, the associated linear sphere complexes determined by $\one{n}$ and $\two{n}$ are constant along each coordinate ribbon and we obtain (see Fig.~\ref{fig:multi_isothermic_cs_proof} \emph{left})
\begin{corollary}
For any multi-R-congruence the Lie inversions $\one{\sigma}$ and $\two{\sigma}$ are constant along each coordinate ribbon.
\end{corollary}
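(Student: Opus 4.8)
The plan is to use the global homogeneous coordinates provided by equation \eqref{equ_cond_multi_lift} and to show that, with this choice, the vectors $\one{\n}$ and $\two{\n}$ defined in \eqref{equ_coord_compl} depend only on the coordinate ribbon, not on the individual quadrilateral within it. Since the Lie inversions $\one{\sigma}$ and $\two{\sigma}$ are by definition $\sigma_{\mathfrak{n}^{(1)}}$ and $\sigma_{\mathfrak{n}^{(2)}}$, and the Lie inversion $\sigma_{\mathfrak{a}}$ from \eqref{equ_lie_inversion} depends on $\mathfrak{a}$ only through the line $\lspan{\mathfrak{a}}$, it suffices to prove that $\lspan{\one{\n}}$ is constant along every $(2)$-coordinate ribbon and $\lspan{\two{\n}}$ is constant along every $(1)$-coordinate ribbon.

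First I would fix a $(2)$-coordinate ribbon, i.e.\ a strip of faces bounded by two adjacent $(1)$-coordinate lines, and take two consecutive faces in it. Label the first face $(ijkl)$ and the face sharing the $(2)$-edge $(jk)$ with it; in the parameter-rectangle notation of \eqref{equ_cond_multi_lift} the two faces together form the rectangle $(ijj'i')$ with $j'$ adjacent to $j$ along the $(2)$-direction and $i'$ adjacent to $i$ along the $(2)$-direction (so $k$ is the vertex between $j$ and $j'$, $l$ the vertex between $i$ and $i'$). For the first face, $\one{\n} = \mathfrak{r}_i - \mathfrak{r}_j$; for the second face, $\one{\n}' = \mathfrak{r}_l - \mathfrak{r}_k$ using its own edge. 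Now I would invoke \eqref{equ_cond_multi_lift} applied to the rectangle $(ijj'i')$, which in the multi-lift coordinates reads $\mathfrak{r}_i - \mathfrak{r}_j + \mathfrak{r}_{j'} - \mathfrak{r}_{i'} = 0$; combined with the single-quad relations $\mathfrak{r}_i - \mathfrak{r}_j + \mathfrak{r}_k - \mathfrak{r}_l = 0$ and the analogous relation for the second quad, this forces $\mathfrak{r}_i - \mathfrak{r}_j = \mathfrak{r}_l - \mathfrak{r}_k = \mathfrak{r}_{i'} - \mathfrak{r}_{j'}$, i.e.\ the multi-lift coordinate difference across any $(1)$-edge of the ribbon is literally the same vector. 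Hence $\one{\n}' = \one{\n}$ as vectors, not merely as projective points, so $\one{\sigma}$ is constant along the $(2)$-ribbon. The argument for $\two{\sigma}$ along $(1)$-ribbons is identical after exchanging the roles of the two coordinate directions.

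The only point requiring care is the bookkeeping of which difference vector is constant along which ribbon and the verification that the multi-lift relation \eqref{equ_cond_multi_lift}, originally stated for a rectangular parameter quadrilateral, specializes correctly to the two-quad rectangle used here; once the index conventions are pinned down this is immediate from linearity. I expect the main (very mild) obstacle to be purely notational: the paper's labelling of a generic quadrilateral $(ijkl)$ versus the parameter-rectangle labelling $(ijj'i')$ must be reconciled so that the telescoping of the relations \eqref{equ_cond_multi_lift} is transparent. No deep input beyond \eqref{equ_cond_multi_lift}, \eqref{equ_coord_compl} and the linearity of \eqref{equ_lie_inversion} is needed.
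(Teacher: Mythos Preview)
Your approach is correct and is precisely the paper's: the paper simply observes that the global lift satisfying \eqref{equ_cond_multi_lift} renders the vectors $\one{\mathfrak{n}}$ and $\two{\mathfrak{n}}$ from \eqref{equ_coord_compl} literally constant along the appropriate ribbons, and your proposal spells out this one-line observation in detail. Two small bookkeeping corrections: by the paper's convention a $(2)$-coordinate ribbon is bounded by two adjacent $(2)$-coordinate lines (not $(1)$-lines), and the two faces you stack in the $(2)$-direction share the $(1)$-edge $(lk)$, not the $(2)$-edge $(jk)$; moreover, the $1\times 2$ multi-relation you invoke is redundant, since it is the sum of the two elementary-quad instances of \eqref{equ_cond_multi_lift}, so once the global lift exists the single-quad identity $\mathfrak{r}_i-\mathfrak{r}_j=\mathfrak{r}_l-\mathfrak{r}_k$ already yields $\one{\mathfrak{n}}=\one{\mathfrak{n}}'$ directly.
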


\noindent As a second immediate consequence of the existence of the lift $\mathfrak{r}$, we obtain a Moutard lift for any discrete R-congruence by swapping the signs of the homogeneous coordinates $\mathfrak{r}$ along every second coordinate line in one family. Therefore, we extended the already known fact for multi-circular nets to multi-R-congruences:   

\begin{prop}
A discrete multi-R-congruence is an isothermic sphere congruence. 
\end{prop}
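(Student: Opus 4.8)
The plan is to exhibit a Moutard lift explicitly from the global lift $\mathfrak{r}:\mathcal{V}\rightarrow\mathcal{L}$ provided by \cite[Theorem~2.4]{multinet}, thereby verifying the isothermicity criterion in the form of the Moutard-lift characterization recalled just before Definition~2.19. Concretely, I would fix the two families of coordinate lines of $\mathcal{G}$ and define $\tilde{\mathfrak{r}}_i := \epsilon_i\,\mathfrak{r}_i$, where $\epsilon_i \in \{\pm 1\}$ is chosen to be $+1$ on every second $(1)$-coordinate line and $-1$ on the remaining ones (equivalently, $\epsilon$ is a function of the discrete $(2)$-coordinate alone, alternating with each step). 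This is the same sign-swapping trick that is classically used to pass from a multi-circular net to its Moutard lift, and the point of the proof is that Equation~\eqref{equ_cond_multi_lift} makes it work verbatim at the level of spheres in $\mathbb{P}(\mathcal{L})$.

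The key computation is then local: take an elementary quadrilateral $(ijkl)$ with $j$ the $(1)$-neighbour of $i$ and $l$ the $(2)$-neighbour of $i$. By construction $\epsilon_i = \epsilon_j$ (same $(2)$-coordinate) and $\epsilon_k = \epsilon_l$, while $\epsilon_i = -\epsilon_l$ (adjacent $(2)$-coordinates). Writing $\epsilon := \epsilon_i$, Equation~\eqref{equ_cond_multi_lift} applied to this quadrilateral gives $\mathfrak{r}_i - \mathfrak{r}_j + \mathfrak{r}_k - \mathfrak{r}_l = 0$, hence
\begin{equation*}
\delta\tilde{\mathfrak{r}}_{ik} = \tilde{\mathfrak{r}}_k - \tilde{\mathfrak{r}}_i = -\epsilon\,\mathfrak{r}_k - \epsilon\,\mathfrak{r}_i = -\epsilon(\mathfrak{r}_i + \mathfrak{r}_k),\qquad
\delta\tilde{\mathfrak{r}}_{jl} = \tilde{\mathfrak{r}}_l - \tilde{\mathfrak{r}}_j = -\epsilon\,\mathfrak{r}_l - \epsilon\,\mathfrak{r}_j = -\epsilon(\mathfrak{r}_j + \mathfrak{r}_l),
\end{equation*}
and since $\mathfrak{r}_i + \mathfrak{r}_k = \mathfrak{r}_j + \mathfrak{r}_l$ by \eqref{equ_cond_multi_lift}, the two diagonal differences are equal, in particular parallel. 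Thus $\tilde{\mathfrak{r}}$ satisfies $\delta\tilde{\mu}_{ik} \,||\, \delta\tilde{\mu}_{jl}$ on every face, which is exactly the defining property of a Moutard lift in Definition~2.19. Since a discrete R-congruence is by assumption a non-degenerate $Q$-net, the existence of a Moutard lift is equivalent to isothermicity, and the proposition follows.

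The only point requiring a little care — and the step I would flag as the main (minor) obstacle — is the global consistency of the sign assignment $\epsilon$: one must check that "alternate with each $(2)$-step" is a well-defined, single-valued function on $\mathcal{V}$ for the cell complex $\mathcal{G}$ in question. On a simply connected piece of $\mathbb{Z}^2$-combinatorics this is immediate, but for a general connected quadrilateral cell complex of degree $4$ one should note that the $(2)$-coordinate is defined up to a global choice along each $(2)$-coordinate ribbon, and that consistency is precisely what allows the alternating sign to descend to $\mathcal{V}$; this is the same bookkeeping already implicit in the statement of \cite[Theorem~2.4]{multinet}, so I would simply remark that the parity function used there is the one we need. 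With that in hand no further work is required, and one may additionally observe that the Moutard lift $\tilde{\mathfrak{r}}$ reproduces the Lie inversions $\one{\sigma}$ and $\two{\sigma}$ via the formulas of Subsection~\ref{section_iso}, tying the construction back to the discrete isothermic R-congruence picture.
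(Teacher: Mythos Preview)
Your proposal is correct and follows exactly the paper's own argument: the paper simply remarks that a Moutard lift is obtained from the global lift of \cite[Theorem~2.4]{multinet} ``by swapping the signs of the homogeneous coordinates $\mathfrak{r}$ along every second coordinate line in one family,'' which is precisely the sign-alternation $\tilde{\mathfrak{r}}_i = \epsilon_i\mathfrak{r}_i$ you write out and verify. Your version is just a more explicit rendering of the same one-line proof, including the local check that $\delta\tilde{\mathfrak{r}}_{ik} = \delta\tilde{\mathfrak{r}}_{jl}$ from \eqref{equ_cond_multi_lift}.
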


\noindent Therefore, by Corollary \ref{cor_omega}, the envelopes of a discrete multi-R-congruence are discrete $\Omega$-surfaces. In this special case we even obtain particular discrete $\Omega$-surfaces:  

\begin{corollary}
The Ribaucour family of a multi-R-congruence consists of discrete $\Omega$-surfaces with curvature spheres that lie in a fixed linear sphere complex along each coordinate ribbon.
\end{corollary}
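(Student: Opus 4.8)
The plan is to combine two facts already established in this subsection. First, by Corollary~\ref{cor_omega}, since a multi-R-congruence is isothermic (the preceding Proposition), its Ribaucour family consists of discrete $\Omega$-surfaces; this gives the ``$\Omega$-surface'' part for free. Second, by the Corollary immediately above, the Lie inversions $\one{\sigma}$ and $\two{\sigma}$ are constant along each coordinate ribbon of the multi-R-congruence. The remaining task is to show that this ribbon-constancy of $\one{\sigma},\two{\sigma}$ forces the curvature spheres of any envelope to lie in a fixed linear sphere complex along each coordinate ribbon.

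The key step is to track a single curvature sphere along a ribbon using Lemma~\ref{lem_curv_spheres_swapped}. Fix a $(1)$-coordinate ribbon, say the sequence of faces with common $(2)$-edges; its faces all share the same Lie inversion $\two{\sigma}$ (here $\two{\n}$ is constant along the ribbon by the Corollary above, since the global multi-lift satisfies \eqref{equ_cond_multi_lift}). Let $f$ be an envelope of the Ribaucour family, and consider the curvature spheres $s^{(2)}$ carried on the $(2)$-edges crossing the ribbon. By Lemma~\ref{lem_curv_spheres_swapped}, for each face $(ijkl)$ of the ribbon we have $\two{\sigma}(\mathfrak{s}_{ij})=\mathfrak{s}_{kl}$. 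Since $\two{\sigma}=\sigma_{\two{\n}}$ is one and the same involution for every face of the ribbon, all the curvature spheres $s_{ij},s_{kl},\dots$ along the ribbon are related by a single Lie inversion. Now apply Lemma~\ref{lem_inversion_lin_system}: any sphere and its image under a fixed Lie inversion span, together with a second such pair, a linear system — equivalently, a plane in $\mathbb{P}(\mathbb{R}^{4,2})$, which is contained in the linear sphere complex $(\two{\n})^\perp$ precisely because $\sigma_{\two{\n}}$ fixes $(\two{\n})^\perp$ pointwise and swaps the pair. Hence every curvature sphere on the $(2)$-edges of this ribbon lies in the fixed complex $(\two{\n})^\perp$. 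The symmetric argument with $\one{\sigma}$ on $(2)$-coordinate ribbons handles the other family of curvature spheres.

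To make the ``fixed linear sphere complex'' assertion precise, I would argue as follows: if $s_1,s_2$ are two curvature spheres on consecutive $(2)$-edges of the ribbon, they are interchanged by $\two{\sigma}$ (via $s_2=\two{\sigma}(s_1)$, chaining Lemma~\ref{lem_curv_spheres_swapped} across the intervening faces and using that $\two{\sigma}$ is involutive and ribbon-constant); thus $\mathfrak{s}_1-\mathfrak{s}_2$ is, up to scale, a multiple of $\two{\n}$ — indeed from \eqref{equ_lie_inversion}, $\two{\sigma}(\mathfrak{s}_1)-\mathfrak{s}_1 \in \langle \two{\n}\rangle$ — so $\mathfrak{s}_1+\mathfrak{s}_2 \perp \two{\n}$, and more to the point $\frac{1}{2}(\mathfrak{s}_1+\two{\sigma}(\mathfrak{s}_1))=\mathfrak{s}_1-\frac{\lspan{\mathfrak{s}_1,\two{\n}}}{\lspan{\two{\n},\two{\n}}}\two{\n} \in (\two{\n})^\perp$. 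Iterating, the affine line (hence the whole $1$-parameter family of curvature spheres) along the ribbon stays in the complex determined by the midpoint together with $\two{\n}$; being ribbon-constant, this complex is fixed along the ribbon. Combining with Corollary~\ref{cor_omega} yields the statement. The main obstacle I anticipate is purely bookkeeping: carefully matching the indexing of ``coordinate ribbon'', ``opposite curvature spheres'', and which of $\one{\sigma},\two{\sigma}$ is constant along which ribbon, so that Lemma~\ref{lem_curv_spheres_swapped} is applied in the correct direction; the Lie-geometric content is light once the constancy of the inversions along ribbons (already proven) is in hand.
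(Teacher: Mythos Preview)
Your high-level strategy is exactly right and matches the paper's implicit argument: combine Corollary~\ref{cor_omega} (via the isothermicity of multi-R-congruences) for the $\Omega$-surface claim with the ribbon-constancy of $\one{\n},\two{\n}$ for the curvature-sphere claim. However, the execution in your second and third paragraphs has a genuine gap.

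You invoke Lemma~\ref{lem_curv_spheres_swapped}, which says that $\two{\sigma}$ \emph{swaps} the opposite curvature spheres $s_{ij}$ and $s_{kl}$. But $s_{ij},s_{kl}$ sit on the $(1)$-edges of the face; along a $(1)$-coordinate ribbon consecutive faces share a $(2)$-edge, so these $(1)$-spheres do not chain from face to face and your ``all the curvature spheres $s_{ij},s_{kl},\dots$ along the ribbon are related by a single Lie inversion'' picture does not hold. Moreover, even for a single pair, $\two{\sigma}(\mathfrak{s}_1)=\mathfrak{s}_2$ only yields $\tfrac12(\mathfrak{s}_1+\mathfrak{s}_2)\in(\two{\n})^\perp$, not $\mathfrak{s}_1\in(\two{\n})^\perp$; your midpoint argument therefore does not place the individual curvature spheres in any identified fixed complex, and the phrase ``the complex determined by the midpoint together with $\two{\n}$'' does not describe a linear sphere complex.

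The fix is to use the \emph{fixing} property instead of the swapping one --- this is precisely the observation recorded just before Proposition~\ref{prop_curv_spheres}. The curvature sphere $s_{il}=f_i\cap f_l$ on a $(2)$-edge is preserved by $\two{\sigma}$ because $\two{\sigma}(f_i)=f_l$, hence $s_{il}\in(\two{\n})^\perp$; likewise $s_{jk}\in(\two{\n})^\perp$. Since $\two{\n}$ is constant along each $(1)$-coordinate ribbon, all $(2)$-curvature spheres on that ribbon lie in the fixed complex $(\two{\n})^\perp$. The symmetric statement holds for $(2)$-ribbons and $(\one{\n})^\perp$. This is the one-line argument the paper intends, and it avoids the bookkeeping you flagged as the main obstacle.
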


\begin{figure}[tb]
  \begin{tikzpicture}[scale=1.0]
  \def\ptsize{1pt}
  \foreach \m in {0.5,...,2.5}
  {
    \draw [dashed] (\m, -0.5) -- (\m, 3.5);
  }

  \node [anchor=north] at (0.5,-0.5) {$n^{(1)}_{s}$};
  \node [anchor=north] at (1.5,-0.5) {$n^{(1)}_{s+1}$};
  \node [anchor=north] at (2.5,-0.5) {$n^{(1)}_{s+2}$};

  \foreach \n in {0.5,...,2.5}
  {
    \draw [dashed] (-0.5, \n) -- (3.5, \n);
  }

  \node [anchor=east] at (-0.5,0.5) {$n^{(2)}_{t}$};
  \node [anchor=east] at (-0.5,1.5) {$n^{(2)}_{t+1}$};
  \node [anchor=east] at (-0.5,2.5) {$n^{(2)}_{t+2}$};

  \foreach \m in {0,...,3}
  {
    \draw (\m,0)--(\m,3);
  }
  \foreach \n in {0,...,3}
  {
    \draw (0,\n)--(3,\n);
  }
  \foreach \m in {0,...,3}
  {
    \foreach \n in {0,...,3}
    {
      \draw [fill=white] (\m,\n) circle (5pt);
    }
  }
\end{tikzpicture}
%
%
%
%
%
%
%
%
%
%
  \caption{In case of a multi-R-congruence the complexes $\one{n}$ and $\two{n}$ are constant along coordinate ribbons.
    }
  \label{fig:multi_isothermic_cs_proof}
\end{figure}
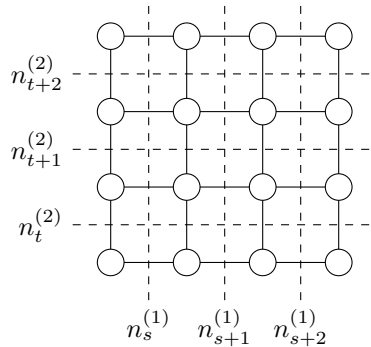

%
%

\section{Ribaucour families}\label{section_rib_family}

\subsection{Geometric properties of envelopes in the Ribaucour family} In the following paragraphs we discuss various special properties of the 2-parameter family of envelopes in the Ribaucour family of a discrete R-congruence.

\subsubsection*{Curvature spheres in a Ribaucour family}
In Lemma \ref{transport_contact} we saw that the Lie inversions $\one{\sigma}$ and $\two{\sigma}$ transport the contact elements of the envelopes along the discrete R-congruence. Since two adjacent contact elements share a common sphere, namely the curvature sphere, this sphere is fixed by the corresponding Lie inversion. Therefore, we obtain the following property for all curvature spheres in a Ribaucour family: 

\begin{prop}\label{prop_curv_spheres}
The curvature spheres of opposite edges of any envelope in a Ribaucour family  lie in the same linear sphere complex.
\end{prop}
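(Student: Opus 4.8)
The plan is to show that for any envelope $f$ in the Ribaucour family, the curvature spheres on the two $(1)$-edges of a quadrilateral $(ijkl)$, namely $s_{ij}$ and $s_{kl}$, lie in a common linear sphere complex, and likewise for the $(2)$-edges $s_{il}$ and $s_{jk}$; the relevant complex is exactly one of the two induced by Proposition~\ref{prop_inversions_for_r}. The key observation comes from Lemma~\ref{lem_curv_spheres_swapped}: the Lie inversion $\two{\sigma}$ satisfies $\two{\sigma}(\mathfrak{s}_{ij})=\mathfrak{s}_{kl}$, and $\one{\sigma}(\mathfrak{s}_{jk})=\mathfrak{s}_{li}$. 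So I would first invoke Lemma~\ref{lem_curv_spheres_swapped} to produce this pairing.

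Next I would apply Lemma~\ref{lem_inversion_lin_system}: given a Lie inversion $\sigma_{\mathfrak{a}}$ and two spheres $r,s$ not lying in $a^\perp$, the spheres $r$, $s$, $\sigma_{\mathfrak{a}}(r)$, $\sigma_{\mathfrak{a}}(s)$ lie in a linear system, hence in particular span a plane in $\mathbb{P}(\mathbb{R}^{4,2})$. Taking $\sigma_{\mathfrak{a}}=\two{\sigma}$ and $r=s=s_{ij}$ degenerates this, so instead I would argue more directly: since $\two{\sigma}$ is an involutive linear map and $\two{\sigma}(\mathfrak{s}_{ij})=\mathfrak{s}_{kl}$, the vector $\mathfrak{s}_{ij}+\mathfrak{s}_{kl}$ is fixed by $\two{\sigma}$ (its $\mathfrak{n}^{(2)}$-component vanishes by the reflection formula~\eqref{equ_lie_inversion}), hence $\mathfrak{s}_{ij}+\mathfrak{s}_{kl}\in\langle\two{\mathfrak{n}}\rangle^\perp$. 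Equivalently $\lspan{\mathfrak{s}_{ij},\two{\mathfrak{n}}}=-\lspan{\mathfrak{s}_{kl},\two{\mathfrak{n}}}$, and since $\two{\sigma}$ maps one to the other, a short computation with~\eqref{equ_lie_inversion} forces $\lspan{\mathfrak{s}_{ij},\two{\mathfrak{n}}}=\lspan{\mathfrak{s}_{kl},\two{\mathfrak{n}}}=0$. Thus both $s_{ij}$ and $s_{kl}$ lie in the linear sphere complex $\two{n}^\perp$. Symmetrically, $s_{jk}$ and $s_{il}$ lie in $\one{n}^\perp$.

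An alternative, perhaps cleaner route: $s_{ij}$ is in oriented contact with both $r_i$ and $r_j$ (since $s_{ij}=f_i\cap f_j$ and $r_i\in f_i$, $r_j\in f_j$). By~\eqref{equ_coord_compl}, $\two{\mathfrak{n}}=\mathfrak{r}_i-\mathfrak{r}_l=\mathfrak{r}_j-\mathfrak{r}_k$. To get $\lspan{\mathfrak{s}_{ij},\two{\mathfrak{n}}}=0$ we need $\lspan{\mathfrak{s}_{ij},\mathfrak{r}_i}=\lspan{\mathfrak{s}_{ij},\mathfrak{r}_l}$ and $\lspan{\mathfrak{s}_{ij},\mathfrak{r}_j}=\lspan{\mathfrak{s}_{ij},\mathfrak{r}_k}$; the left sides are both zero, so it suffices to know $s_{ij}\perp r_l$ and $s_{ij}\perp r_k$. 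That is not automatic, so this route needs the envelope structure: $\two{\sigma}(\mathfrak{f}_i)=\mathfrak{f}_l$ by Lemma~\ref{transport_contact}, and $\two{\sigma}$ preserves oriented contact, so from $s_{ij}\in f_i$ we get $\two{\sigma}(\mathfrak{s}_{ij})\in f_l$; but $\two{\sigma}(\mathfrak{s}_{ij})=\mathfrak{s}_{kl}\in f_l$ is consistent rather than new. I would therefore favour the direct computation of the previous paragraph, which only uses that $\two{\sigma}$ is the reflection~\eqref{equ_lie_inversion} in $\two{\mathfrak{n}}$ and that it interchanges $\mathfrak{s}_{ij}$ and $\mathfrak{s}_{kl}$.

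The main obstacle is purely bookkeeping: making sure that $s_{ij}$ and $s_{kl}$ are genuinely \emph{not} fixed individually by $\two{\sigma}$ in the generic case, so that the conclusion ``$\lspan{\mathfrak{s}_{ij},\two{\mathfrak{n}}}=0$'' is the right one rather than a vacuous statement, and handling the possibility that a curvature sphere degenerates. Once the contact-preservation and involutivity of the Lie inversions (Lemma~\ref{lem_inversions_commute}, Lemma~\ref{lem_involutive}) and the swap property (Lemma~\ref{lem_curv_spheres_swapped}) are in place, the proof is essentially the two-line reflection-formula computation showing that a sphere and its image under a Lie inversion, when the two are required to be swapped by that same inversion, must both be orthogonal to the defining vector; concretely, if $\sigma_{\mathfrak{a}}(\mathfrak{s})=\mathfrak{t}$ and $\sigma_{\mathfrak{a}}(\mathfrak{t})=\mathfrak{s}$ up to scale, then $\mathfrak{t}=\mathfrak{s}-\tfrac{2\lspan{\mathfrak{s},\mathfrak{a}}}{\lspan{\mathfrak{a},\mathfrak{a}}}\mathfrak{a}$ and pairing with $\mathfrak{a}$ gives $\lspan{\mathfrak{t},\mathfrak{a}}=-\lspan{\mathfrak{s},\mathfrak{a}}$, while the scaling constraint closes the argument.
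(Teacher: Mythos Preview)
Your argument contains a genuine error: you have chosen the wrong complex. The curvature spheres $s_{ij}$ and $s_{kl}$ on the $(1)$-edges lie in $(\one{n})^\perp$, \emph{not} in $(\two{n})^\perp$, and your reflection computation does not establish what you claim. From $\two{\sigma}(\mathfrak{s}_{ij})=\mathfrak{s}_{kl}$ you correctly deduce $\lspan{\mathfrak{s}_{kl},\two{\mathfrak{n}}}=-\lspan{\mathfrak{s}_{ij},\two{\mathfrak{n}}}$, but the involutivity $\two{\sigma}(\mathfrak{s}_{kl})=\mathfrak{s}_{ij}$ gives back exactly the same equation, not a second independent one; there is no ``scaling constraint'' that closes the argument. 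A reflection that swaps two vectors places their \emph{difference} in $\lspan{\two{\mathfrak{n}}}$ and their \emph{sum} in $\lspan{\two{\mathfrak{n}}}^\perp$, but does not force either vector individually into $\lspan{\two{\mathfrak{n}}}^\perp$. Indeed, generically $\lspan{\mathfrak{s}_{ij},\two{\mathfrak{n}}}\neq 0$ (compare the configuration in~(\ref{symmetric_configuration})).

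The paper's argument, and in fact your own ``alternative route'' once the indices are corrected, is immediate: since $\one{\mathfrak{n}}=\mathfrak{r}_i-\mathfrak{r}_j=\mathfrak{r}_l-\mathfrak{r}_k$ and $s_{ij}$ is in oriented contact with both $r_i$ and $r_j$, we get $\lspan{\mathfrak{s}_{ij},\one{\mathfrak{n}}}=\lspan{\mathfrak{s}_{ij},\mathfrak{r}_i}-\lspan{\mathfrak{s}_{ij},\mathfrak{r}_j}=0$, and likewise $\lspan{\mathfrak{s}_{kl},\one{\mathfrak{n}}}=0$. Equivalently, as the paper phrases it: $\one{\sigma}$ maps $f_i$ to $f_j$ (Lemma~\ref{transport_contact}), so the common sphere $s_{ij}=f_i\cap f_j$ is \emph{fixed} by $\one{\sigma}$ and hence lies in $(\one{n})^\perp$; the same holds for $s_{kl}$. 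The relevant fact is not that $\two{\sigma}$ \emph{swaps} opposite curvature spheres (Lemma~\ref{lem_curv_spheres_swapped}) but that $\one{\sigma}$ \emph{fixes} each of them.
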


\subsubsection*{Totally umbilic faces} If the four curvature spheres of a face of a discrete Legendre map coincide, the face is called \textit{umbilic}. While in the smooth case umbilic points are rather special, in any $(2,1)$-Ribaucour family we obtain envelopes with umbilic faces:

\begin{lem}\label{lem_totally_umbilic}
Let $f$ be a discrete Legendre map in a $(2,1)$-Ribaucour family. A face $(ijkl)$ of $f$ is umbilic if and only if the four contact elements $f_i, f_j, f_k$ and $f_l$ are also contact elements of the Dupin cyclide generated by the R-spheres of this face.

In this case, for any projection to a M\"obius geometry, the four point spheres of $f$ lie on a curvature circle of the Dupin cyclide.
\end{lem}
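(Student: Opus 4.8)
The statement has two parts: a characterization of umbilic faces, and a consequence about point spheres in a Möbius projection. The plan is to work entirely within the fixed face $(ijkl)$ and use the two Lie inversions $\one{\sigma}$ and $\two{\sigma}$ together with Lemma~\ref{transport_contact} and Lemma~\ref{lem_curv_spheres_swapped}.

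First I would set up coordinates for the Dupin cyclide. Since we are in a $(2,1)$-Ribaucour family, the four R-spheres $r_i,r_j,r_k,r_l$ span a $(2,1)$-plane $V$, and the curvature spheres of the Dupin cyclide are precisely $\mathbb{P}(\mathcal{L})\cap V$, a one-parameter family; its contact elements are the lines in $\mathbb{P}(\mathcal{L})$ joining a sphere of $\mathbb{P}(\mathcal{L})\cap V$ to a sphere of $\mathbb{P}(\mathcal{L})\cap V^\perp$ (recall $V^\perp$ is again $(2,1)$ by the first bullet in the linear systems discussion). The key observation is that $\one{\n},\two{\n}\in V$ by Equation~\eqref{equ_coord_compl}, so $V^\perp\subset\one{\n}^\perp\cap\two{\n}^\perp$; hence $\one{\sigma}$ and $\two{\sigma}$ fix every sphere of $V^\perp$ pointwise and map $V$ to itself.

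For the forward direction, suppose the face is umbilic, so $s_{ij}=s_{jk}=s_{kl}=s_{li}=:s$ for one common curvature sphere $s$. By Lemma~\ref{transport_contact} the contact element $f_j=\one{\sigma}(f_i)$, and since $\one{\sigma}$ fixes $s$ (it lies in $f_i\cap f_j$) while mapping $r_i\mapsto r_j$, and the only other data in $f_i$ is $s\in V^\perp$... here I need $s$ to lie in $V^\perp$. That is exactly the content: an umbilic face of $f$ consists of contact elements $\spann{r_i,s},\dots$ with the \emph{same} $s$; applying $\one{\sigma}$ and $\two{\sigma}$, which fix $s$, consistently transports these, and the compatibility forces $s\perp\one{\n}$ and $s\perp\two{\n}$, i.e. $s\in V^\perp$ (using that $\one{\n},\two{\n}$ together with any $r_m$ span $V$). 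Then each $f_m=\spann{r_m,s}$ with $r_m\in\mathbb{P}(\mathcal{L})\cap V$ and $s\in\mathbb{P}(\mathcal{L})\cap V^\perp$ is a contact element of the Dupin cyclide. Conversely, if all four $f_m$ are contact elements of the cyclide, then $f_m=\spann{r_m, s_m}$ with $s_m\in V^\perp$; Lemma~\ref{lem_curv_spheres_swapped} gives $\two{\sigma}(s_{ij})=s_{kl}$ and $\one{\sigma}(s_{jk})=s_{li}$, but the curvature spheres $s_{ij}\in f_i\cap f_j$ are forced to lie in $V^\perp$ as well (they are combinations of $r_i,r_j$ and of $r_j,r_k$ etc., but also of $s_i\in V^\perp$ and $r$'s...), and being fixed by the inversions they must all coincide with the single sphere of $V^\perp$ spanned appropriately — giving $s_{ij}=s_{jk}=s_{kl}=s_{li}$, umbilicity. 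The delicate point to get right is showing the curvature spheres lie in $V^\perp$; the cleanest route is: $f_m\cap f_n$ is a sphere in $f_m=\spann{r_m,s_m}$, and since $r_m\notin V^\perp$ but $f_m\subset$ a plane meeting $V^\perp$ only in $\lspan{s_m}$... one argues $s_{mn}$ proportional to $s_m$, hence in $V^\perp$.

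For the second statement, fix a point sphere complex $\mathfrak{p}$ with $\lspan{\mathfrak{p},\mathfrak{p}}=-1$. The point spheres of $f$ are $p_m=f_m\cap\mathfrak{p}^\perp$. In the umbilic case $f_m=\spann{r_m,s}$ with $s\in V^\perp$ fixed; so $p_m$ is the unique null combination of $r_m$ and $s$ lying in $\mathfrak{p}^\perp$, and as $m$ ranges over the face, $r_m$ traces the circle $\mathbb{P}(\mathcal{L})\cap V$ while $s$ is constant. The image of this configuration under $\mathfrak{p}^\perp$-projection is: $p_m\in\mathbb{P}(\mathcal{L})\cap\mathfrak{p}^\perp\cap\spann{V,s}=\mathbb{P}(\mathcal{L})\cap\mathfrak{p}^\perp\cap W$ where $W:=\spann{V,s}$ is a $(2,2)$- or $(3,1)$-subspace; intersecting the point-sphere hyperplane cuts this down so that the $p_m$ are concircular — this circle is exactly a curvature circle of the Dupin cyclide (the locus swept by the point-sphere envelope along one family as the curvature sphere varies over $\mathbb{P}(\mathcal{L})\cap V$). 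I would phrase this as: the Dupin cyclide's point-sphere picture in $\mathfrak{p}^\perp$ is a classical Dupin cyclide foliated by two families of circles (curvature circles), and the four point spheres, sharing the fixed curvature sphere $s$ of the cyclide, all lie on the single curvature circle corresponding to $s$. The main obstacle throughout is the bookkeeping in the first paragraph of this direction — pinning down that the common curvature sphere of an umbilic face necessarily lies in $V^\perp$ — everything else is then a direct translation via Lemma~\ref{transport_contact}.
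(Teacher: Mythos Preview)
Your proposal is correct, but it takes a substantially more elaborate route than the paper's own argument, and the step you flag as ``delicate'' is in fact the trivial one.

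The paper's proof avoids the Lie inversions $\one{\sigma},\two{\sigma}$ and Lemmas~\ref{transport_contact}, \ref{lem_curv_spheres_swapped} entirely, using only the orthogonal splitting $D_1\oplus_\perp D_2$ of the Dupin cyclide. For the forward direction: if the face is umbilic with common curvature sphere $s$, then $s\in f_m$ for all $m$, hence $\lspan{\mathfrak{s},\mathfrak{r}_m}=0$ for all four R-spheres; since the $r_m$ span $D_1$, this gives $s\in D_2$ immediately, and each $f_m=\spann{r_m,s}$ is then a contact element of the cyclide. For the converse: if $f_i=\spann{r_i,s_i}$ with $s_i\in D_2$, then $s_i$ is orthogonal to all of $D_1$, in particular to $r_j,r_k,r_l$, so $s_i$ already lies in every contact element $f_m$ and is therefore the common curvature sphere. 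That is the whole argument.

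Your detour through $\one{\sigma},\two{\sigma}$ recovers the same orthogonality ($s\perp\one{\n}$ and $s\perp\two{\n}$ together with $s\perp r_i$ spans $V^\perp$), but this is just a repackaging of $s\perp r_i,r_j,r_k,r_l$. Likewise, in your converse direction, the ``bookkeeping'' you worry about dissolves once you observe that $f_i\cap f_j\subset\spann{r_i,s_i}\cap\spann{r_j,s_j}$ with $r_i,r_j\in D_1$ and $s_i,s_j\in D_2$; the non-degeneracy $D_1\cap D_2=0$ forces the intersection to lie in $D_2$, so $s_{ij}=s_i=s_j$ projectively. Your approach buys nothing extra here; the paper's is simply the cleaner version of the same idea. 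Your treatment of the Möbius-geometric statement (the four point spheres lie on the curvature circle of the cyclide corresponding to the fixed sphere $s\in D_2$) matches the paper's, which leaves this essentially implicit.
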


\begin{proof}
Let $f$ be a discrete Legendre map in a $(2,1)$-Ribaucour family and assume that the contact elements $f_i, f_j, f_k$ and $f_l$ of a face $(ijkl)$ coincide with contact elements of the Dupin cyclide $d= D_1 \oplus_\perp D_2$, where the R-spheres $r_i, r_j, r_k$ and $r_l$ lie in the $(2,1)$-plane $D_1$. Then, $f_i=\spann{r_i, s_i}$, where $s_i \in D_2$. But, since $s_i$ is also in oriented contact with the R-spheres $r_j, r_k$ and $r_l$, the sphere $s_i$ is the constant curvature sphere of the envelope $f$ at this face. Hence this face of $f$ is umbilic.

Conversely, suppose that an envelope $f$ of a discrete $(2,1)$-R-congruence has a totally umbilic face, then the constant curvature sphere has to be in oriented contact with all four R-spheres of this face. Since these four R-spheres determine a Dupin cyclide, the constant curvature sphere of $f$ is also a curvature sphere of this Dupin cyclide (in the other curvature sphere family than the R-spheres). This completes the proof.
\end{proof}

\noindent Recall that, by Corollary \ref{cor_initial_contact_el}, any choice of an initial contact element at an initial vertex provides a unique envelope in the Ribaucour family. Therefore, with the help of Lemma \ref{lem_totally_umbilic}, we can construct umbilic faces at any face of a discrete $(2,1)$-R-congruence:  
 
\begin{corollary}
For any face of a discrete $(2,1)$-R-congruence there exists a 1-parameter family of envelopes in the Ribaucour family that are umbilic at this face.
\end{corollary}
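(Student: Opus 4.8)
The plan is to combine Lemma \ref{lem_totally_umbilic} with the existence result of Corollary \ref{cor_initial_contact_el}. Fix a face $(ijkl)$ of the discrete $(2,1)$-R-congruence $r$. The four R-spheres $r_i, r_j, r_k, r_l$ of this face span a $(2,1)$-plane $D_1$, so they are the curvature spheres of a Dupin cyclide $d = D_1 \oplus_\perp D_2$, where $D_2$ is the orthogonal $(2,1)$-plane and carries the second family of curvature spheres. By Lemma \ref{lem_totally_umbilic}, an envelope $f$ in the Ribaucour family is umbilic at $(ijkl)$ precisely when $f_i, f_j, f_k, f_l$ are contact elements of $d$; concretely $f_i = \spann{r_i, s}$ for a single sphere $s \in D_2$, and this $s$ serves as the constant curvature sphere at the face.

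First I would produce, for each sphere $s \in D_2 \cap \mathbb{P}(\mathcal{L})$, an initial contact element. Since $D_2 = D_1^\perp$, any such $s$ is in oriented contact with all four R-spheres of the face; in particular $\lspan{\mathfrak{s}, \mathfrak{r}_i} = 0$, so $f_i := \spann{s, r_i}$ is a genuine contact element at the vertex $i$ (the two spheres are distinct because $s \in D_2$ and $r_i \in D_1$ with $D_1 \cap D_2 = \{0\}$). Next I would feed this $f_i$ into Corollary \ref{cor_initial_contact_el}: it determines a unique envelope $f$ in the Ribaucour family. I then need to check that $f$ is umbilic at the chosen face. For this I use that the curvature sphere family $D_2$ of the Dupin cyclide is preserved by $\one{\sigma}$ and $\two{\sigma}$ on this face — indeed $\one{\mathfrak{n}}, \two{\mathfrak{n}}$ lie in the $(2,1)$-plane $D_1$ spanned by the R-spheres, hence in $D_2^\perp$, so every sphere of $D_2$ lies in the linear sphere complexes $\one{n}^\perp$ and $\two{n}^\perp$ and is fixed by both Lie inversions (cf.\ the discussion of fixed spheres after Equation \eqref{equ_lie_inversion}). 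Therefore the construction $\mathfrak{f}_j = \one{\sigma}(\mathfrak{f}_i)$, $\mathfrak{f}_k = \two{\sigma}(\mathfrak{f}_j)$, $\mathfrak{f}_l = \one{\sigma}(\mathfrak{f}_k)$ keeps the sphere $s$ in every contact element, so $f_i \cap f_j = f_j \cap f_k = f_k \cap f_l = f_l \cap f_i = s$: all four curvature spheres of the face coincide, i.e.\ the face is umbilic.

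Finally I would check that this assignment $s \mapsto f$ genuinely produces a $1$-parameter family. The sphere $s$ ranges over $D_2 \cap \mathbb{P}(\mathcal{L})$, which — since $D_2$ is a $(2,1)$-plane — is a conic, hence a $1$-parameter family (this is exactly the second family of curvature spheres of the Dupin cyclide $d$). Distinct choices of $s$ give distinct initial contact elements $\spann{s, r_i}$, and since the envelope-to-initial-contact-element correspondence of Corollary \ref{cor_initial_contact_el} is a bijection, distinct $s$ yield distinct envelopes. This gives the claimed $1$-parameter family of envelopes umbilic at the prescribed face.

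The only real subtlety, and the step I expect to require the most care, is the parametrization count: one must confirm that $D_2 \cap \mathbb{P}(\mathcal{L})$ is a nondegenerate $1$-parameter family (not empty, not a point) — but this is guaranteed by the signature $(2,1)$ hypothesis, since a $(2,1)$-plane meets the light cone in a real conic with a one-dimensional projective parameter. Everything else is a direct application of the already-established Lemma \ref{lem_totally_umbilic}, Corollary \ref{cor_initial_contact_el}, and the fixed-sphere property of Lie inversions; no new computation is needed.
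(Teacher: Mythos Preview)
Your proposal is correct and follows essentially the same approach as the paper: the paper's argument is the one-sentence remark preceding the corollary, which invokes Lemma~\ref{lem_totally_umbilic} together with Corollary~\ref{cor_initial_contact_el}, and you have simply spelled out the details --- identifying the initial contact elements $\spann{r_i,s}$ for $s\in D_2\cap\mathbb{P}(\mathcal{L})$, verifying via $\one{\mathfrak{n}},\two{\mathfrak{n}}\in D_1=D_2^\perp$ that $s$ is fixed by both Lie inversions, and counting parameters. Your direct verification that all four curvature spheres equal $s$ is a nice touch that makes the umbilicity explicit without having to re-invoke the ``if'' direction of Lemma~\ref{lem_totally_umbilic}.
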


\subsubsection*{Permutability theorems} A key property in the theory of transformations is the existence of permutability theorems: given two Ribaucour transforms $f_1$ and $f_2$ of a Legendre map $f$, then there exists a 1-parameter family of Legendre maps that are simultaneous transforms of $f_1$ and $f_2$. Moreover, corresponding points of these Legendre maps are concircular. This result, generically, holds for smooth, as well as, for discrete Legendre maps (cf.\,\cite[Theorem 3.6]{ddg_book}, \cite{MR2254053}, \cite[\S 8]{MR2004958}).  

However, contrary to the smooth case, for any discrete Legendre map circularity of corresponding points in the permutability theorem can fail: four envelopes in a Ribaucour family obviously satisfy the permutability theorem. But, if we project to a M\"obius geometry, corresponding point spheres of the four envelopes do not have to lie on a circle. The point spheres lie on the corresponding R-sphere and are therefore in general only cospherical.

\begin{prop}\label{prop_permutability_unusual}
Let $f$ be a discrete Legendre map, then there exist three Ribaucour transforms $f_1$, $f_2$ and $f_{12}$ of $f$ such that $f_{12}$ is a simultaneous Ribaucour transform of $f_1$ and $f_2$ and, for any projection to a M\"obius geometry, corresponding point spheres of these four nets are not circular.
\end{prop}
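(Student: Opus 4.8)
The plan is to build an explicit counterexample rather than argue abstractly, since the statement is an existence claim and the phenomenon (point spheres cospherical but not concircular) is genuinely about the discrete setting. First I would start with a single R-congruence quadrilateral. Take a discrete $(2,1)$-R-congruence $r$ consisting of a single face $(ijkl)$ whose four R-spheres lie in a $(2,1)$-plane; by Proposition~\ref{prop_inversions_for_r} this induces the two Lie inversions $\one{\sigma}$ and $\two{\sigma}$. Choose a point sphere complex $\mathfrak{p}$ so that the R-spheres of this face are genuinely spheres (not point spheres), i.e.\ $\lspan{\mathfrak{r}_i,\mathfrak{p}}\neq 0$. Now pick an initial contact element $f_i=\spann{r_i,s_0}$ at the vertex $i$; by Corollary~\ref{cor_initial_contact_el} this determines a unique envelope $f$ of $r$. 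I will take $f$ itself, together with three further envelopes of $r$ built from three other initial contact elements at $i$, as the four nets $f, f_1, f_2, f_{12}$. Any two envelopes of the same R-congruence are Ribaucour transforms of one another (their point sphere envelopes are circular nets with corresponding curvature lines), so the permutability configuration is automatic; the real content is to arrange the non-circularity.

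Next I would make the point-sphere obstruction precise. Under the chosen projection to M\"obius geometry, the point sphere of any envelope $g$ of $r$ at the vertex $i$ is $g_i\cap p^\perp$, and since $g_i\ni r_i$, this point sphere lies in the pencil $\spann{r_i,\text{(other sphere of }g_i)}$ intersected with $p^\perp$ — but crucially, as the excerpt notes, the point sphere always lies on the fixed R-sphere $r_i$. Hence the four point spheres of $f,f_1,f_2,f_{12}$ at vertex $i$ all lie on the single 2-sphere $r_i$, so they are automatically cospherical. They are concircular iff they lie on a common circle contained in $r_i$, equivalently iff the corresponding contact elements $f_i,(f_1)_i,(f_2)_i,(f_{12})_i$ meet $r_i^\perp\cap p^\perp$ in a 2-dimensional (rather than 3-dimensional) configuration. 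So the task reduces to: choose the four initial contact elements at $i$ so that the four resulting point spheres on $r_i$ are in "general position" on the sphere $r_i$, i.e.\ not on one circle. Since we have a 1-parameter freedom in each initial contact element and the point sphere on $r_i$ traces out (generically) a 1-parameter curve on $r_i$ as we vary the contact element, four generic choices will not be concircular — this is a codimension-one condition, hence avoidable. I would verify this by a direct coordinate computation in $\mathbb{R}^{4,2}$: write down $\mathfrak{r}_i,\ldots,\mathfrak{r}_l$ explicitly, compute $\one{\mathfrak{n}},\two{\mathfrak{n}}$ from \eqref{equ_coord_compl}, pick four values of the contact-element parameter, transport each to get $(g)_i$, intersect with $p^\perp$, and check that the four resulting points on $r_i$ have nonvanishing "cross-ratio obstruction" (e.g.\ compute the $4\times 4$ Gram-type determinant whose vanishing characterizes concircularity).

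The main obstacle I anticipate is purely bookkeeping: one must be careful that the four envelopes are genuinely pairwise distinct and that the configuration is a bona fide instance of the permutability theorem in the sense of the cited results (i.e.\ $f_{12}$ is simultaneously a Ribaucour transform of $f_1$ and of $f_2$), not a degenerate one. Using all four as envelopes of the \emph{same} R-congruence $r$ makes all six pairwise Ribaucour relations hold through the common sphere congruence $r$, so the permutability diagram closes trivially; the only thing to check is that one can realize this "all four share one R-congruence" configuration within the standard permutability framework, which is exactly how permutability for discrete Ribaucour transforms is usually proven (the common R-congruence is the "fourth sphere congruence" of the Bianchi cube). Once that identification is in place, the non-circularity is the generic coordinate computation sketched above, and choosing the four initial contact elements generically completes the proof.
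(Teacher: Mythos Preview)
Your proposal is correct and follows essentially the same route as the paper. The paper does not supply a formal proof environment for this proposition; its argument is the short paragraph immediately preceding the statement, which observes that four envelopes in a single Ribaucour family ``obviously satisfy the permutability theorem'' while their corresponding point spheres all lie on the common R-sphere and are therefore generically only cospherical, not concircular --- exactly the mechanism you describe and propose to verify by an explicit choice of four initial contact elements at one vertex.
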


We emphasize that if corresponding contact elements of $f$, $f_1$, and $f_2$ span a 3-dimensional projective subspace of signature $(2,2)$, then there only exists a 1-parameter family of simultaneous Ribaucour transforms $f_{12}$. The points of the contact elements will then be circular. Hence in this case, the usual permutability theorem holds.

We remark that the significant difference to the smooth transformation theory pointed out in Proposition \ref{prop_permutability_unusual} leads to examples of discrete nets that have no counterparts in the smooth surface theory (see for example \cite[\S 1.1]{discrete_channel}).

\subsubsection*{Deformations of envelopes in the Ribaucour family} The ambiguity of envelopes in the Ribaucour family of a discrete R-congruence $r:\mathcal{V} \rightarrow \mathbb{P}(\mathcal{L})$ provides a possibility to smoothly deform two envelopes $f$ and $\hat{f}$ into each other.

Let $r_0$ be an R-sphere of the congruence $r$ at an initial vertex $v_0 \in \mathcal{V}$. Then, by Corollary~\ref{cor_initial_contact_el}, any choice of a smooth initial Legendre curve 
\begin{equation*}
\gamma_0:[0,1] \rightarrow \mathcal{Z} \ \ \text{with }
\begin{cases}
r_0 \in \gamma_0(t) \ \text{for any } t \in [0,1], 
\\[3pt]\gamma_0(0)=f_0 \ \ \text{and} \ \ \gamma_0(1)=\hat{f}_0
\end{cases}
\end{equation*}
gives rise to a 1-parameter family $\{f^{\gamma_0(t)}\}_{t \in [0,1] }$ of envelopes in the Ribaucour family (see Fig.~\ref{fig:ribaucour_curve}).  

\begin{figure}[bt]
\centering
\includegraphics[width=10cm]{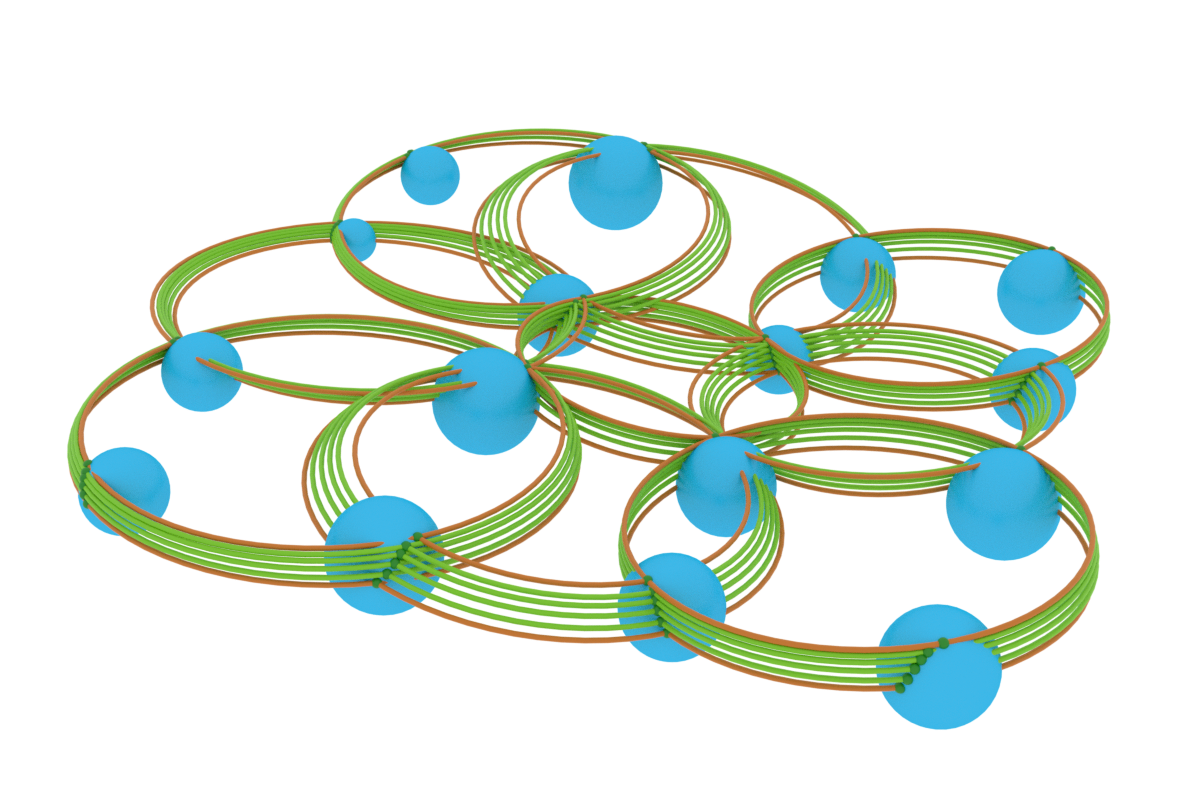}
\caption{Two envelopes $f$ and $\hat{f}$ of the Ribaucour family (orange) can be interpolated with a 1-parameter family of envelopes (green) that are constructed from an initial curve~$\gamma$ going through two initial contact elements of $f$ and $\hat{f}$.
}
\label{fig:ribaucour_curve}
\end{figure}
%
In particular, this construction yields a smooth Legendre curve $t \mapsto f^{\gamma_0(t)}_{v}$ for each vertex $v \in \mathcal{V}$ that lies in the fixed parabolic linear sphere complex determined by the corresponding R-sphere $r_v$. Moreover, according to Lemma \ref{transport_contact}, two adjacent Legendre curves are related by a Lie inversion. Thus, they envelop a 1-parameter family of spheres and form a smooth Ribaucour pair of curves.

\medskip

\noindent If projecting to a M\"obius geometry, the point sphere envelopes of $\{f^{\gamma_0(t)}\}_{t \in [0,1] }$ provide smooth deformations of circular nets, where the vertices move along spherical curves. In particular, if the initial curve $\gamma_0$ is a (part of a) circle, then circularity of it is preserved along the entire discrete R-congruence (cf.\,Proposition \ref{prop_moebius_inversion_edge}).

\medskip

\noindent The construction pointed out in this section provides a way of obtaining discrete and semi-discrete triply orthogonal systems with special vertical coordinate surfaces having one family of spherical curvature lines. For particular choices of the initial curve $\gamma_0$ we even obtain semi-discrete cyclic systems. A deeper analysis of these systems will be given in a future work.

\subsection{Discrete Ribaucour pairs in the Ribaucour family}\label{subsec_rib_pairs} 

To gain further geometric insights into the Ribaucour family of a discrete R-congruence, in this subsection, we fix two envelopes of a discrete R-congruence, classically called a \emph{Ribaucour pair} of discrete Legendre maps.
The contact elements of a Ribaucour pair form a fundamental line system in the sense of~\cite{bobenko_schief-discrete_line_complexes, DoliwaSantiniManas:2000:TransformationsOfQnets} since R-congruences are Q-nets.

\medskip

\noindent Thus, suppose that $f, \hat{f}:\mathcal{V}\rightarrow \mathcal{Z}$ are two discrete Legendre maps enveloping the discrete R-congruence $r:\mathcal{V}\rightarrow \mathbb{P}(\mathcal{L})$. Then, for any edge $(ij)$ the contact elements $(f_i, f_j, \hat{f}_j, \hat{f}_i)$ provide a quadrilateral of a discrete Legendre map, that is, adjacent contact elements share a common sphere. Thus, along each coordinate line of a Ribaucour pair the contact elements of $f$ and $\hat{f}$ yield a coordinate ribbon of a discrete Legendre map; these coordinate ribbons will be called \emph{vertical ribbons of the Ribaucour pair}. The ``curvature spheres'' of the vertical ribbons are given by the R-spheres and the curvature spheres of $f$ and $\hat{f}$ along the corresponding coordinate line of the Ribaucour pair.
Moreover, note that any two consecutive vertical ribbons also form a Ribaucour pair enveloping curvature spheres of $f$ and $\hat{f}$. 

In the following paragraphs we demonstrate how the structures of the various Ribaucour pairs interact with each other.

\begin{prop}\label{prop_three_n}
Let $f, \hat{f}:\mathcal{V}\rightarrow \mathcal{Z}$ be two envelopes of  a discrete R-congruence. Then there exists a map 
\begin{equation*}
\three{n}:\mathcal{F}\rightarrow \mathbb{P}(\mathbb{R}^{4,2}) \setminus \mathbb{P}(\mathcal{L})
\end{equation*}
such that for any quadrilateral $(ijkl)$ the induced Lie inversion $\three{\sigma}:=\sigma_{\three{\mathfrak{n}}}$ interchanges the curvature spheres of $f$ and $\hat{f}$,
\begin{align*}
\sigma_{\three{\mathfrak{n}}}(\mathfrak{s}_{ij})=\hat{\mathfrak{s}}_{ij}, \ 
\sigma_{\three{\mathfrak{n}}}(\mathfrak{s}_{jk})=\hat{\mathfrak{s}}_{jk}, \ 
\sigma_{\three{\mathfrak{n}}}(\mathfrak{s}_{kl})=\hat{\mathfrak{s}}_{kl}, \ 
\sigma_{\three{\mathfrak{n}}}(\mathfrak{s}_{il})=\hat{\mathfrak{s}}_{il}.
\end{align*}
Moreover, for any quadrilateral the Lie inversion $\three{\sigma}$ preserves the R-spheres, maps corresponding contact elements of the Ribaucour pair $(f, \hat{f})$ onto each other and is involutive to $\one{\sigma}$ and $\two{\sigma}$ (see Fig.~\ref{fig:sigma3} for notation).
\end{prop}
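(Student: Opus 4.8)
The plan is to construct $\three{\n}$ explicitly from the data of the Ribaucour pair and then verify all the claimed properties by the same kind of difference-vector bookkeeping already used in the proofs of Proposition~\ref{prop_inversions_for_r} and Lemma~\ref{lem_involutive}. First I would exploit the vertical-ribbon structure: for the edge $(ij)$ the four contact elements $(f_i,f_j,\hat f_j,\hat f_i)$ form a Legendre quadrilateral, so adjacent ones share curvature spheres; in particular $f_i\cap\hat f_i$, $f_j\cap\hat f_j$, $s_{ij}=f_i\cap f_j$ and $\hat s_{ij}=\hat f_i\cap \hat f_j$ are the four ``curvature spheres'' of that quadrilateral, and since $r_i\in f_i\cap\hat f_i$ and $r_j\in f_j\cap\hat f_j$, in fact $f_i\cap\hat f_i=\langle r_i\rangle$ and $f_j\cap\hat f_j=\langle r_j\rangle$. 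Now apply Lemma~\ref{lem_unique_inversion_four_spheres} to the two contact elements $f_i$ and $\hat f_i$ (which share the common sphere $r_i$) together with the spheres $s_{ij},s_{il}\in f_i$ and $\hat s_{ij},\hat s_{il}\in\hat f_i$: this yields a unique Lie inversion, which I take as $\three{\sigma}$, with $\three{\sigma}(\mathfrak s_{ij})=\hat{\mathfrak s}_{ij}$ and $\three{\sigma}(\mathfrak s_{il})=\hat{\mathfrak s}_{il}$. Because $r_i$ lies in both $f_i$ and $\hat f_i$ and $\three\sigma$ fixes $\langle r_i\rangle\cap\langle\ldots\rangle^\perp$-type spheres only if it lies in the complex, I would instead argue directly: the complex $\three{\n}$ is the intersection point of the lines $\spann{s_{ij},\hat s_{ij}}$ and $\spann{s_{il},\hat s_{il}}$ (as in the proof of Lemma~\ref{lem_unique_inversion_four_spheres}), and one checks $\langle\three{\mathfrak n},\mathfrak r_i\rangle=0$ using that $r_i\perp s_{ij}$, $r_i\perp s_{il}$, $r_i\perp\hat s_{ij}$, $r_i\perp\hat s_{il}$, whence $\three\sigma(\mathfrak r_i)=\mathfrak r_i$.

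Next I would show the two remaining curvature-sphere identities $\three\sigma(\mathfrak s_{jk})=\hat{\mathfrak s}_{jk}$ and $\three\sigma(\mathfrak s_{kl})=\hat{\mathfrak s}_{kl}$ and the fact that $\three\sigma$ maps corresponding contact elements onto each other. The clean way is to package everything in global-type homogeneous coordinates on the quadrilateral. Following the proof of Proposition~\ref{prop_inversions_for_r}, normalize the R-spheres so that $\mathfrak r_i-\mathfrak r_j+\mathfrak r_k-\mathfrak r_l=0$; for the envelopes, write each curvature sphere as a combination of two R-spheres and the relevant connecting vectors, and do the same for the hatted ones. Then $\three\sigma$ preserving $r_i$ and sending $s_{ij}\mapsto\hat s_{ij}$ forces $\three{\mathfrak n}\in\spann{\mathfrak s_{ij}-\hat{\mathfrak s}_{ij}}$ up to the normal direction; combining with the $s_{il}\mapsto\hat s_{il}$ condition pins down $\three{\mathfrak n}$, and then the identities on the opposite edges $(jk)$ and $(kl)$ follow by applying $\one\sigma$, $\two\sigma$ and Lemma~\ref{lem_curv_spheres_swapped}: since $\two\sigma$ swaps $s_{ij}\leftrightarrow s_{kl}$ and also $\hat s_{ij}\leftrightarrow\hat s_{kl}$, and $\two\sigma$ preserves the R-congruence, one gets $\three\sigma'=\two\sigma\circ\three\sigma\circ\two\sigma$ sending $s_{kl}\mapsto\hat s_{kl}$; uniqueness (Lemma~\ref{lem_unique_inversion_four_spheres} applied at vertex $k$, or the fact that $\two\sigma$ commutes with $\three\sigma$ once involutivity is established) then gives $\three\sigma'=\three\sigma$, hence $\three\sigma(\mathfrak s_{kl})=\hat{\mathfrak s}_{kl}$, and similarly for $(jk)$ via $\one\sigma$. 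For contact elements: $f_j=\spann{s_{ij},s_{jk}}$ and $\hat f_j=\spann{\hat s_{ij},\hat s_{jk}}$, so $\three\sigma(\mathfrak f_j)=\spann{\three\sigma(\mathfrak s_{ij}),\three\sigma(\mathfrak s_{jk})}=\spann{\hat{\mathfrak s}_{ij},\hat{\mathfrak s}_{jk}}=\hat f_j$, and likewise at $i,k,l$.

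Finally, involutivity $\langle\three{\mathfrak n},\one{\mathfrak n}\rangle=0$ and $\langle\three{\mathfrak n},\two{\mathfrak n}\rangle=0$: by Lemma~\ref{lem_inversions_commute} this is equivalent to $\three\sigma$ commuting with $\one\sigma$ and $\two\sigma$, which I would verify geometrically --- $\one\sigma\circ\three\sigma$ and $\three\sigma\circ\one\sigma$ are both Lie inversions sending $\mathfrak s_{ij}\mapsto\hat{\mathfrak s}_{ij}$-type data and preserving the appropriate R-spheres, hence agree by the uniqueness in Lemma~\ref{lem_unique_inversion_four_spheres} --- or, more directly, by computing $\langle\three{\mathfrak n},\one{\mathfrak n}\rangle$ in the normalized coordinates: with $\one{\mathfrak n}=\mathfrak r_i-\mathfrak r_j$ and $\three{\mathfrak n}$ a multiple of $\mathfrak s_{ij}-\hat{\mathfrak s}_{ij}$, one uses $r_i\perp s_{ij},\hat s_{ij}$ and $r_j\perp s_{ij},\hat s_{ij}$ (since $s_{ij}\in f_i\cap f_j$ contains both R-spheres) to get $\langle\mathfrak r_i-\mathfrak r_j,\mathfrak s_{ij}-\hat{\mathfrak s}_{ij}\rangle=0$ term by term.

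\textbf{Main obstacle.} The genuinely delicate point is the \emph{coherence} of the facewise definition of $\three{\n}$ with the claimed opposite-edge identities: a priori Lemma~\ref{lem_unique_inversion_four_spheres} only produces, at vertex $i$, an inversion matching $s_{ij},s_{il}$ to $\hat s_{ij},\hat s_{il}$, and one must prove that the \emph{same} inversion also matches $s_{jk}\mapsto\hat s_{jk}$ and $s_{kl}\mapsto\hat s_{kl}$ --- i.e., that the four lines $\spann{s_{ij},\hat s_{ij}}$, $\spann{s_{jk},\hat s_{jk}}$, $\spann{s_{kl},\hat s_{kl}}$, $\spann{s_{il},\hat s_{il}}$ all pass through one point of $\mathbb P(\mathbb R^{4,2})$. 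This is exactly where the planarity of the R-congruence face (the defining Q-net property) and Lemma~\ref{lem_curv_spheres_swapped} have to be brought in; I expect the conjugation argument $\three\sigma\mapsto\one\sigma\circ\three\sigma\circ\one\sigma$ together with the uniqueness clause of Lemma~\ref{lem_unique_inversion_four_spheres} to be the cleanest route, with the explicit coordinate computation as a fallback.
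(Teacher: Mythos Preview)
Your strategy is sound and leads to a correct proof, but the order of your steps needs repair, and one of your two suggested routes for the ``coherence'' step is circular.

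Concretely: once you have $\three{\mathfrak n}$ as the intersection of $\spann{s_{ij},\hat s_{ij}}$ and $\spann{s_{il},\hat s_{il}}$, the orthogonality relations $r_i,r_j\perp s_{ij},\hat s_{ij}$ and $r_i,r_l\perp s_{il},\hat s_{il}$ give $\three{\mathfrak n}\perp r_i,r_j,r_l$, hence $\perp r_k$ by planarity, so \emph{all four} R-spheres are fixed and $\langle\three{\mathfrak n},\one{\mathfrak n}\rangle=\langle\three{\mathfrak n},\two{\mathfrak n}\rangle=0$ follow immediately. You should do this \emph{first}. Only then does the conjugation argument work: with involutivity in hand, $\two\sigma\circ\three\sigma\circ\two\sigma=\three\sigma$ by Lemma~\ref{lem_inversions_commute}, and then $\three\sigma(\mathfrak s_{kl})=\two\sigma\three\sigma\two\sigma(\mathfrak s_{kl})=\two\sigma\three\sigma(\mathfrak s_{ij})=\two\sigma(\hat{\mathfrak s}_{ij})=\hat{\mathfrak s}_{kl}$ via Lemma~\ref{lem_curv_spheres_swapped}, and similarly for $(jk)$. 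Your alternative route ``uniqueness at vertex $k$'' is circular as written: Lemma~\ref{lem_unique_inversion_four_spheres} at $k$ requires knowing what both $\three\sigma$ and $\three\sigma'$ do to $s_{jk}$ \emph{and} $s_{kl}$, which is exactly what you are trying to prove.

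Comparison with the paper: the paper does not use conjugation. Instead it defines \emph{two} candidate complexes, $\mathfrak n^{(3)}:=\mathfrak s_{ij}-\hat{\mathfrak s}_{ij}=\mathfrak s_{kl}-\hat{\mathfrak s}_{kl}$ and $\tilde{\mathfrak n}^{(3)}:=\mathfrak s_{il}-\hat{\mathfrak s}_{il}=\mathfrak s_{jk}-\hat{\mathfrak s}_{jk}$ (each well defined because Lemma~\ref{lem_curv_spheres_swapped} makes the respective four spheres coplanar), observes that the R-spheres lie in both complexes so involutivity with $\one n,\two n$ holds, and then shows $\tilde{\mathfrak n}^{(3)}\parallel\mathfrak n^{(3)}$ by the explicit projection formula $\mathfrak s_{il}=\langle\mathfrak s_{ij},\two{\mathfrak n}\rangle\,\mathfrak r_i-\langle\mathfrak r_i,\two{\mathfrak n}\rangle\,\mathfrak s_{ij}$ (and its hatted analogue), subtracting, and using $\langle\two{\mathfrak n},\three{\mathfrak n}\rangle=0$. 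So both arguments hinge on the same involutivity fact; the paper then finishes with a one-line coordinate identity, while your version replaces that identity by the more structural observation that commuting inversions transport the edge-$(ij)$ relation to the opposite edge. Either is fine; yours is slightly more conceptual, the paper's slightly more direct.
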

\begin{proof}
Firstly observe that, since $\mathfrak{s}_{kl}=\two{\sigma}(\mathfrak{s}_{ij})$ and $\hat{\mathfrak{s}}_{kl}=\two{\sigma}(\hat{\mathfrak{s}}_{ij})$, we can choose homogeneous coordinates such that
\begin{align*}
0=\mathfrak{s}_{ij} - \hat{\mathfrak{s}}_{ij} - \mathfrak{s}_{kl} +\hat{\mathfrak{s}}_{kl},
\end{align*}
as well as homogeneous coordinates
\begin{equation*}
0=\mathfrak{s}_{jk} - \hat{\mathfrak{s}}_{jk} - \mathfrak{s}_{il} +\hat{\mathfrak{s}}_{il}
\end{equation*}
for the other four curvature spheres. Moreover, we define two vectors $n^{(3)}$ and $\tilde{n}^{(3)}$ by
\begin{align*}
\mathfrak{n}^{(3)}&:=\mathfrak{s}_{ij} - \hat{\mathfrak{s}}_{ij} = \mathfrak{s}_{kl} - \hat{\mathfrak{s}}_{kl},
\\\tilde{\mathfrak{n}}^{(3)}&:= \mathfrak{s}_{il} - \hat{\mathfrak{s}}_{il}=\mathfrak{s}_{jk} - \hat{\mathfrak{s}}_{jk}.
\end{align*}
Then, the R-spheres $r_i, r_j, r_k$ and $r_l$ lie in the induced linear sphere complexes $n^{(3)}$ and $\tilde{n}^{(3)}$, which are therefore involutive to $\one{n}$ and $\two{n}$. 

Furthermore, since 
\begin{align*}
\mathfrak{s}_{il}&= \lspan{\mathfrak{s}_{ij}, \two{\mathfrak{n}}} \mathfrak{r}_i - \lspan{\mathfrak{r}_{i}, \two{\mathfrak{n}}} \mathfrak{s}_{ij},
\\\hat{\mathfrak{s}}_{il}&= \lspan{\hat{\mathfrak{s}}_{ij}, \two{\mathfrak{n}}} \mathfrak{r}_i - \lspan{\mathfrak{r}_{i}, \two{\mathfrak{n}}} \hat{\mathfrak{s}}_{ij},
\end{align*}
the two linear sphere complexes induced by $n^{(3)}$ and $\tilde{n}^{(3)}$ coincide:
\begin{align*}
\tilde{\mathfrak{n}}^{(3)}&=\lspan{\two{\mathfrak{n}},\three{\mathfrak{n}}}\mathfrak{r}_i - \lspan{\two{\mathfrak{n}},\mathfrak{r}_i}\three{\mathfrak{n}}=- \lspan{\two{\mathfrak{n}},\mathfrak{r}_i}\three{\mathfrak{n}}.
\end{align*}
\end{proof}

\begin{figure}[tb]
  \centering
  \begin{tikzpicture}[scale=3.5]
  \tikzset{label node/.style={minimum size=.6cm,inner sep=.5pt, outer sep=.1cm, circle, fill=white, fill opacity=0.8}}

  \coordinate (v1) at (1.0,0);
  \coordinate (v2) at (30:0.8);
  \coordinate (v3) at (0,1);
  \coordinate (v1h) at ($0.5*(v1)$);
  \coordinate (v2h) at ($0.5*(v2)$);
  \coordinate (v3h) at ($0.5*(v3)$);



  \draw (v2) -- ++ (v3);
  \draw[white, thick] (0,0) -- ++ (v1) -- ++ (v2) -- ++ ($-1*(v1)$) -- ++ ($-1*(v2)$);
  \draw[black!50!green] (0,0) -- ++ (v1) -- ++ (v2) -- ++ ($-1*(v1)$) -- ++ ($-1*(v2)$);

  \draw[white, thick] (0,0) -- ++ (v3) (v1) -- ++ (v3);
  \draw (0,0) -- ++ (v3) (v1) -- ++ (v3);

  \draw[white, thick] ($(v1) + (v2)$) -- ++ (v3);
  \draw ($(v1) + (v2)$) -- ++ (v3);

  \draw[white, thick] (v3) -- ++ (v1) -- ++ (v2) -- ++ ($-1*(v1)$) -- ++ ($-1*(v2)$);
  \draw[black!50!green] (v3) -- ++ (v1) -- ++ (v2) -- ++ ($-1*(v1)$) -- ++ ($-1*(v2)$);

  \draw[dotted] ($(v2h) - 0.25*(v1)$) -- ++ ($1.5*(v1)$); 
  \draw[dotted] ($(v2h) - 0.25*(v1) + 1.5*(v1)$) -- ++ (v3); 
  \draw[dotted] ($(v3) + (v2h) - 0.25*(v1)$) -- ++ ($1.5*(v1)$); 
  \draw[dotted] ($(v2h) - 0.25*(v1)$) -- ++ (v3); 
  \node[anchor=south east] at ($0.5*(v2)-0.25*(v1) + (v3)$) {$\two{n}$};

  \draw[dotted] ($(v1h) - 0.25*(v2)$) -- ++ ($1.5*(v2)$); 
  \draw[dotted] ($(v1h) - 0.25*(v2) + 1.5*(v2)$) -- ++ (v3); 
  \draw[dotted] ($(v3) + (v1h) - 0.25*(v2)$) -- ++ ($1.5*(v2)$); 
  \draw[dotted] ($(v1h) - 0.25*(v2)$) -- ++ (v3); 
  \node[anchor=south west] at ($0.5*(v1) + 1.25*(v2) + (v3)$) {$\one{n}$};

  \draw[dashed] ($(v3h) - 0.25*(v1)$) -- ++ ($1.5*(v1)$); 
  \draw[dashed] ($(v3h) - 0.25*(v1) + 1.5*(v1)$) -- ++ (v2); 
  \draw[dashed] ($(v2) + (v3h) - 0.25*(v1)$) -- ++ ($1.5*(v1)$); 
  \draw[dashed] ($(v3h) - 0.25*(v1)$) -- ++ (v2); 
  \node[anchor=west] at ($1.25*(v1) + 0.5*(v3) + (v2)$) {$\three{n}$};

  \node [anchor=south east] at ($(v2) + (v3)$) {\color{black!50!green}$f$};
  \node [anchor=north east] at (0,0) {\color{black!50!green}$\hat{f}$};

  \path (0,0) -- ++ (v3) node[label node, circle, fill=black!10!white, draw, midway] {$r_i$}; 
  \path (v1) -- ++ (v3) node[label node, circle, fill=black!10!white, draw, midway] {$r_j$}; 
  \path ($(v2) + (v1)$) -- ++ (v3) node[label node, circle, fill=black!10!white, draw, midway] {$r_k$}; 
  \path (v2) -- ++ (v3) node[label node, circle, fill=black!10!white, draw, midway] {$r_l$}; 

  \path (0,0) -- ++ (v1) 
  node[label node, circle, fill=white, draw=black!50!green, midway] {$\hat{s}_{ij}$}; 
  \path (v1) -- ++ (v2) 
  node[label node, circle, fill=white, draw=black!50!green, midway] {$\hat{s}_{jk}$}; 
  \path (v2) -- ++ (v1) 
  node[label node, circle, fill=white, draw=black!50!green, midway] {$\hat{s}_{kl}$}; 
  \path (0,0) -- ++ (v2) 
  node[label node, circle, fill=white, draw=black!50!green, midway] {$\hat{s}_{il}$}; 
  
  \path ($(0,0) + (v3)$) -- ++ (v1) 
  node[label node, circle, fill=white, draw=black!50!green, midway] {${s}_{ij}$}; 
  \path ($(v1) + (v3)$) -- ++ (v2) 
  node[label node, circle, fill=white, draw=black!50!green, midway] {${s}_{jk}$}; 
  \path ($(v2) + (v3)$) -- ++ (v1) 
  node[label node, circle, fill=white, draw=black!50!green, midway] {${s}_{kl}$}; 
  \path ($(0,0) + (v3)$) -- ++ (v2) 
  node[label node, circle, fill=white, draw=black!50!green, midway] {${s}_{il}$}; 
\end{tikzpicture}
  \caption{Two envelopes $f$ and $\hat{f}$ of a discrete R-congruence define a linear sphere complex
    $\three{n}$ and the corresponding Lie inversion~$\three{\sigma}$. The
    Lie inversion interchanges curvature spheres and contact elements of the two
    envelopes while fixing the R-spheres of the congruence.  
  }
  \label{fig:sigma3}
\end{figure}
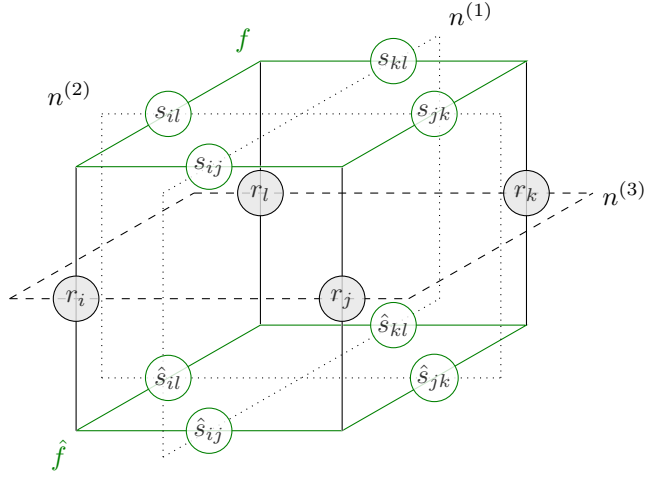

Thus, on each hexahedron of a Ribaucour pair $(f, \hat{f})$ we obtain the following symmetric configuration of the curvature spheres and the R-spheres:
\begin{equation}\label{symmetric_configuration}
\begin{aligned}
\one{s}_{ij}, \one{s}_{kl}, \one{\hat{s}}_{kl}, \one{\hat{s}}_{ij} \perp  \one{n},
\\\two{s}_{jk}, \two{s}_{il}, \two{\hat{s}}_{il}, \two{\hat{s}}_{jk} \perp \two{n},
\\r_i, r_j, r_k, r_l \perp \three{n}.
\end{aligned}
\end{equation}
In particular, for $\nu=1,2,3$, the Lie inversion $\sigma^{(\nu)}$ fixes the four spheres assigned to the $(\nu)$-edges and interchanges the spheres within the other two linear sphere complexes.

Thus, the Lie inversions $\three{\sigma}$ given in Proposition \ref{prop_three_n} reveal a crucial property between two envelopes of a discrete R-congruence:

\begin{theorem}
Two envelopes of a discrete R-congruence are related by a facewise constant Lie inversion.
\end{theorem}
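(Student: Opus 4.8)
The plan is to read the theorem off Proposition~\ref{prop_three_n}. That proposition already attaches to every face $F=(ijkl)$ of the common quadrilateral cell complex a single Lie inversion $\three{\sigma}_F=\sigma_{\three{\mathfrak{n}}_F}$ that fixes the four R-spheres $r_i,r_j,r_k,r_l$ and interchanges the four curvature spheres of $f$ on $F$ with the corresponding curvature spheres of $\hat f$. So the only thing that remains is to check that this one inversion simultaneously transports each enveloping contact element $f_v$ to $\hat f_v$ for $v\in\{i,j,k,l\}$; then the face-indexed family $F\mapsto\three{\sigma}_F$ is precisely the asserted facewise constant Lie inversion relating $f$ and $\hat f$.

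First I would fix a face $F=(ijkl)$ and express each contact element on $F$ through data that $\three{\sigma}_F$ controls. Since $f$ envelops $r$ we have $r_i\in f_i$, and since $s_{ij}=f_i\cap f_j$ is a curvature sphere of $f$ we also have $s_{ij}\in f_i$; away from the degenerate locus where an R-sphere equals a curvature sphere these two spheres are distinct and hence span the line $f_i=\spann{r_i,s_{ij}}$. The same reasoning gives $\hat f_i=\spann{r_i,\hat s_{ij}}$, and analogously at the vertices $j,k,l$, each time using an appropriate curvature sphere incident with that vertex on $F$.

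Next I would apply $\three{\sigma}_F$. As a linear map of $\mathbb{R}^{4,2}$ it acts on projective lines; by Proposition~\ref{prop_three_n} it fixes $\lspan{\mathfrak{r}_i}$ and sends $\lspan{\mathfrak{s}_{ij}}$ to $\lspan{\hat{\mathfrak{s}}_{ij}}$, so $\three{\sigma}_F(f_i)=\spann{r_i,\hat s_{ij}}=\hat f_i$. The index bookkeeping is what makes this work: $\three{\sigma}_F$ does not permute the R-spheres among themselves and carries each curvature sphere to its hatted counterpart bearing the same pair of indices, so $f_v\mapsto\hat f_v$ vertex by vertex, with no shift. Letting $F$ range over $\mathcal{F}$ then exhibits $f$ and $\hat f$ as related by the facewise constant Lie inversion $F\mapsto\three{\sigma}_F$, which is the claim.

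Since the argument is essentially a repackaging of Proposition~\ref{prop_three_n}, I do not expect a serious obstacle; the one point requiring a little care is the degenerate case in which some R-sphere $r_v$ coincides with one of the curvature spheres of $f$ or $\hat f$ at $v$, where the naive spanning description of the contact element breaks down. This can be handled either by excluding such faces from the outset (they are non-generic) or by observing that $\three{\sigma}_F$ still preserves the sphere that, together with the companion curvature sphere, spans the degenerate contact element, so that $\three{\sigma}_F(f_v)=\hat f_v$ persists by the same computation as in the proof of Lemma~\ref{transport_contact}.
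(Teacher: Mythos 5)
Your proposal is correct and follows essentially the same route as the paper: the theorem is read off Proposition~\ref{prop_three_n}, whose facewise Lie inversion $\three{\sigma}$ fixes the R-spheres and carries each curvature sphere of $f$ to the correspondingly indexed curvature sphere of $\hat f$, whence $f_v=\spann{r_v,s}\mapsto\spann{r_v,\hat s}=\hat f_v$ vertex by vertex. Your extra remark on the degenerate case $r_v=s$ is a reasonable genericity caveat that the paper leaves implicit.
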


Furthermore, as a consequence of Lemma \ref{lem_inversions_commute}, the Lie inversions $\three{\sigma}$ constructed in Proposition~\ref{prop_three_n} induce pairings in the entire Ribaucour family:

\begin{corollary}\label{cor_rib_pairs}
Let $(f, \hat{f})$ be a discrete Ribaucour pair of a discrete R-congruence. Then the corresponding Lie inversions $\three{\sigma}$ decompose the Ribaucour family into discrete Ribaucour pairs.
\end{corollary}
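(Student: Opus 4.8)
The plan is to read the pairing off Proposition~\ref{prop_three_n} and then spread it over the whole family by means of Corollary~\ref{cor_initial_contact_el}. Fix an initial vertex $v_0\in\mathcal V$ together with an incident face $F_0\in\mathcal F$, and let $\three{\sigma}$ be the Lie inversion associated to $F_0$ in Proposition~\ref{prop_three_n}. Since $\three{\sigma}$ fixes the R-spheres of $F_0$, in particular $r_{v_0}$, and is involutive, it restricts to an involution of the set of contact elements containing $r_{v_0}$. Via the bijection of Corollary~\ref{cor_initial_contact_el} between envelopes of $r$ and their initial contact elements at $v_0$, this produces a map $g\mapsto\hat g$ on the Ribaucour family, where $\hat g$ is the unique envelope with $\hat{\mathfrak g}_{v_0}=\three{\sigma}(\mathfrak g_{v_0})$. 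Each $\hat g$ really is an envelope, because $r_{v_0}\in\hat g_{v_0}$ and Corollary~\ref{cor_initial_contact_el} then extends it uniquely, and $(g,\hat g)$ is a discrete Ribaucour pair because both maps envelope $r$; for $g=f$ one recovers $\hat g=\hat f$ by Proposition~\ref{prop_three_n}.

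It then remains to check that $g\mapsto\hat g$ is an involution. By Proposition~\ref{prop_three_n} the complex $\three{\mathfrak{n}}$ of $F_0$ is involutive to $\one{\mathfrak{n}}$ and $\two{\mathfrak{n}}$, so Lemma~\ref{lem_inversions_commute} shows that $\three{\sigma}$ commutes with $\one{\sigma}$ and $\two{\sigma}$ on $F_0$. Transporting the initial contact element around $F_0$ with $\one{\sigma}$ and $\two{\sigma}$ (as in the construction following Proposition~\ref{prop_inversions_for_r}) and applying Lemma~\ref{transport_contact} to both $g$ and $\hat g$, this commutation yields $\hat g_v=\three{\sigma}(g_v)$ at every vertex $v$ of $F_0$; in particular $\hat{\hat g}_{v_0}=\three{\sigma}(\three{\sigma}(\mathfrak g_{v_0}))=\mathfrak g_{v_0}$, hence $\hat{\hat g}=g$ by uniqueness in Corollary~\ref{cor_initial_contact_el}. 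Thus the Ribaucour family is the disjoint union of the orbits $\{g,\hat g\}$, each a discrete Ribaucour pair (with at most a one-parameter subfamily of self-paired envelopes, namely those $g$ whose initial contact element is fixed by $\three{\sigma}$); any choice of incident face $F_0$ yields such a decomposition.

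The delicate point, and the one I expect to be the main obstacle, is that $\three{\sigma}$ is only defined \emph{face by face}. For the distinguished pair $(f,\hat f)$ the facewise inversions relate $f$ and $\hat f$ on every face (Proposition~\ref{prop_three_n}), but for a general envelope $g$ the inversions coming from two faces meeting at a vertex $v$ need not agree on the contact element $g_v$, so $\hat g$ cannot be obtained by applying $\three{\sigma}$ ``facewise-constantly'' to $g$. Anchoring the construction at the single face $F_0$ and extending via Corollary~\ref{cor_initial_contact_el} is precisely what makes it coherent; beyond Proposition~\ref{prop_three_n} the only ingredient is the commutation of Lemma~\ref{lem_inversions_commute}, so the proof should be short, with this face-dependence being the one thing worth spelling out.
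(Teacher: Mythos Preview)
Your proof is correct and follows the paper's approach: the paper offers no detailed argument beyond the remark that the corollary is ``a consequence of Lemma~\ref{lem_inversions_commute}'', i.e., exactly the commutation of $\three{\sigma}$ with $\one{\sigma}$ and $\two{\sigma}$ that you invoke. Your careful handling of the face-dependence of $\three{\sigma}$---anchoring at a single face $F_0$ and propagating via Corollary~\ref{cor_initial_contact_el} rather than applying $\three{\sigma}$ facewise to a general $g$---is a genuine clarification the paper leaves implicit; your suspicion that the facewise inversions need not agree on $g_v$ for $g\notin\{f,\hat f\}$ is well-founded, since on a shared edge $(ij)$ the two complexes $\three{\mathfrak n}$ lie in $\spann{s_{ij},\hat s_{ij}}$ and Lemma~\ref{lem_inversion_family}\ref{lem_inversion_two} only guarantees agreement on contact elements through $s_{ij}$, not through~$r_i$.
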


\noindent We emphasize that the Lie inversions $\three{\sigma}$ and the induced decomposition of the Ribaucour family depend on the initial choice of the Ribaucour pair $(f, \hat{f})$. Hence, the decomposition given in Corollary \ref{cor_rib_pairs} is not unique.

\medskip

\noindent The Ribaucour family of a discrete R-congruence in a fixed elliptic linear sphere complex $a^\perp$, $\lspan{\mathfrak{a}, \mathfrak{a}} > 0$, as discussed in the Subsection \ref{subsection_fixed_complex} admits a special decomposition into Ribaucour pairs: let $f:\mathcal{V}\rightarrow \mathcal{Z}$ be an envelope that is not spherical, then the Legendre map $\hat{\mathfrak{f}}_i:=\sigma_\mathfrak{a}(\mathfrak{f}_i)$ also lies in the Ribaucour family. Hence, we obtain Ribaucour pairs related by the constant Lie inversion $\sigma_a$. 

This fact coincides with the smooth case, where a smooth Ribaucour pair enveloping a sphere congruence with a constant osculating complex is also related by a fixed Lie inversion (\cite[\S 89]{blaschke}).

\subsection{Cyclidic nets in the Ribaucour family}\label{subsec_cyclidic_nets}
For any face of a discrete Legendre map there exists a 1-parameter family of \emph{face-cyclides}, Dupin cyclides that share the four curvature spheres assigned to a face with the discrete Legendre map (cf.\,\cite{paper_cyclidic, discrete_channel}). Note that any face-cyclide has four distinguished curvature lines, namely the curvature lines that join two adjacent contact elements of the discrete Legendre map and lie on the corresponding curvature sphere.
We will denote the space of $(2,1)$-planes in $\mathbb{R}^{4,2}$ by $G_{(2,1)}(\mathbb{R}^{4,2})$ and Dupin cyclides by two complementary $(2,1)$-planes $\one{D}$ and $\two{D}$ with $\one{D} \oplus_\perp \two{D} = \mathbb{R}^{4,2}$.

\medskip

To begin with, we shall point out how the Lie inversions $\one{\sigma}$ and $\two{\sigma}$ corresponding to a discrete R-congruence interact with the face-cyclides of its envelopes.

\begin{lem}
A congruence of face-cyclides of an envelope is preserved by the Lie inversions $\one{\sigma}$ and $\two{\sigma}$; however, the contact elements of opposite curvature lines going through the vertices of the discrete Legendre map are interchanged by the corresponding Lie inversion. 
\end{lem}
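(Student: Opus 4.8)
The plan is to work facewise, so fix a quadrilateral $(ijkl)$ of the discrete R-congruence $r$ with enveloping discrete Legendre map $f$, and let $d$ be one of its face-cyclides, i.e.\ $d = \one{D} \oplus_\perp \two{D}$ with $\one{D}, \two{D} \in G_{(2,1)}(\mathbb{R}^{4,2})$, where one family of curvature spheres of $d$ contains the four curvature spheres $s_{ij}, s_{jk}, s_{kl}, s_{il}$ of $f$ on the edges of this face. First I would recall from Proposition~\ref{prop_curv_spheres} (and Lemma~\ref{lem_curv_spheres_swapped}) that $\one{\sigma}$ fixes both $s_{ij}$ and $s_{kl}$ while swapping $s_{jk} \leftrightarrow s_{il}$; symmetrically $\two{\sigma}$ fixes $s_{jk}, s_{il}$ and swaps $s_{ij} \leftrightarrow s_{kl}$. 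Thus each of $\one{\sigma}, \two{\sigma}$ maps the set of the four curvature spheres of $d$ to itself, hence preserves the $(2,1)$-plane $\one{D}$ they span. Since Lie inversions are orthogonal linear involutions, the complementary plane $\two{D} = (\one{D})^{\perp}$ is then preserved as well; consequently the whole cyclide $d = \one{D} \oplus_\perp \two{D}$ — and therefore the congruence of face-cyclides built from the family of parameters — is preserved by $\one{\sigma}$ and $\two{\sigma}$. This is the first assertion.

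For the second assertion I would examine how $\one{\sigma}$ acts on the four distinguished curvature lines of $d$. Each such curvature line is the Dupin curvature line joining two adjacent contact elements $f_a, f_b$ and lying on the curvature sphere $s_{ab}$; in the Lie-geometric model it is a line in $\mathbb{P}(\mathcal{L})$ — equivalently a point of the Grassmannian of lines — determined by the point pair $f_a \cap s_{ab}^{\perp}$ (the two contact elements through which it passes). The two curvature lines of $d$ "of the other family" — the ones transverse to the R-spheres — are the one through $f_i, f_j$ and $f_l, f_k$ on the spheres $s_{ij}$ and $s_{kl}$ (the $(1)$-curvature lines) and the one through $f_i, f_l$ and $f_j, f_k$ on $s_{il}, s_{jk}$ (the $(2)$-curvature lines). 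By Lemma~\ref{transport_contact}, $\one{\sigma}$ sends $f_i \mapsto f_j$ and $f_l \mapsto f_k$; hence $\one{\sigma}$ maps the curvature line of $d$ on $s_{ij}$ (joining $f_i, f_j$) to itself and likewise the one on $s_{kl}$ to itself, but it sends the curvature line on $s_{il}$ (joining $f_i, f_l$) to the one on $s_{jk}$ (joining $f_j, f_k$), since $s_{il} \mapsto s_{jk}$. So $\one{\sigma}$ interchanges precisely the two "opposite" distinguished curvature lines transverse to it while fixing the other two; in particular it interchanges the contact elements $f_i \leftrightarrow f_j$ and $f_l \leftrightarrow f_k$ that these opposite curvature lines pass through. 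The symmetric statement for $\two{\sigma}$ follows by swapping the roles of the index pairs. The phrase "contact elements of opposite curvature lines going through the vertices of the discrete Legendre map are interchanged" is exactly this.

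The main technical point — and the only place where a genuine argument rather than bookkeeping is needed — is the first step: showing that an orthogonal involution fixing the span $\one{D}$ of the four curvature spheres necessarily preserves the whole cyclide congruence, i.e.\ that it restricts to a well-defined map on the 1-parameter family of face-cyclides and does not merely permute their curvature-sphere data. Here I would use that the complementary plane $\two{D}$ is forced as $(\one{D})^{\perp}$ by orthogonality of $\sigma$, so $\sigma(\one{D} \oplus_\perp \two{D}) = \one{D} \oplus_\perp \two{D}$ as a splitting, and that the 1-parameter family of face-cyclides through the prescribed curvature spheres is exactly the family of such orthogonal splittings with the given $(1)$-curvature spheres in $\one{D}$; the involution $\sigma$ permutes this family, and since it fixes $\one{D}$ pointwise-as-a-set it in fact fixes each member (the splitting of $\mathbb{R}^{4,2}$ into $\one{D} \oplus_\perp \two{D}$ being determined once $\one{D}$ is). I expect the remaining verifications — that the images $\one{\sigma}(f_i), \dots$ are indeed the claimed contact elements of the cyclide, and that "opposite" means exactly the $(1)$- versus $(2)$-transverse pair — to be immediate consequences of Lemma~\ref{transport_contact} and Lemma~\ref{lem_curv_spheres_swapped} already established in the excerpt.
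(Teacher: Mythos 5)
Your bookkeeping of how $\one{\sigma}$ and $\two{\sigma}$ act on the four curvature spheres is correct, and your second part follows the same route as the paper, but there is a genuine gap at the step you yourself identify as the main technical point. Your setup places all four curvature spheres $s_{ij}, s_{jk}, s_{kl}, s_{il}$ in one family of the face-cyclide and asserts that they span the $(2,1)$-plane $\one{D}$. This is false: adjacent curvature spheres such as $s_{ij}$ and $s_{jk}$ share the contact element $f_j$ and are therefore in oriented contact, so the correct configuration (the one the paper uses) is $s_{ij}, s_{kl} \in \one{D}$ and $s_{il}, s_{jk} \in \two{D}$, and generically the four spheres span a four-dimensional space of signature $(2,2)$, not a $(2,1)$-plane. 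Consequently, knowing that each inversion permutes these four spheres does not determine its action on the cyclide: there is a whole $1$-parameter family of face-cyclides sharing exactly these four curvature spheres, so preserving them as a set is strictly weaker than preserving $d$. With the corrected placement, the swap data only tells you that $\one{\sigma}$ fixes the $2$-plane $\spann{\mathfrak{s}_{ij},\mathfrak{s}_{kl}} \subset \one{D}$ and preserves $\spann{\mathfrak{s}_{il},\mathfrak{s}_{jk}} \subset \two{D}$; this says nothing yet about the remaining direction of $\one{D}$, so your conclusions ``hence preserves the $(2,1)$-plane $\one{D}$ they span'' and later ``since it fixes $\one{D}$ pointwise-as-a-set'' are not established.

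The missing observation, which is exactly how the paper closes this gap, is that $\one{\sigma}(\mathfrak{s}_{jk})=\mathfrak{s}_{il}$ forces $\one{\mathfrak{n}} \in \spann{\mathfrak{s}_{il},\mathfrak{s}_{jk}} \subset \two{D}$ (by the reflection formula \eqref{equ_lie_inversion}, a suitable difference of homogeneous coordinates of the two swapped spheres is proportional to $\one{\mathfrak{n}}$). Hence every curvature sphere $\tilde{s}\in \one{D}$ satisfies $\lspan{\tilde{\mathfrak{s}},\one{\mathfrak{n}}}=0$ and is fixed by $\one{\sigma}$; so $\one{D}$ is fixed pointwise, $\two{D}=(\one{D})^\perp$ is preserved, and each member of the face-cyclide congruence is preserved. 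Once this is in place, the second assertion goes as you (and the paper) say: the contact elements along the curvature line through $f_i$ and $f_l$ are $\spann{s_{il},\tilde{s}}$ with $\tilde{s}\in\one{D}$, and since $\one{\sigma}$ fixes $\tilde{s}$ and maps $s_{il}\mapsto s_{jk}$, they are carried to the contact elements of the opposite curvature line; analogously for $\two{\sigma}$. Note that your version of this step, which only tracks the vertex contact elements $f_i\mapsto f_j$, $f_l\mapsto f_k$, also needs the pointwise fixing of $\one{D}$ to conclude that all contact elements along the line, not merely those at the vertices, are interchanged.
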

\begin{proof}
Let $d=\one{D} \oplus_\perp \two{D}$ be a face-cyclide of the face $(ijkl)$ with $s_{ij}, s_{kl} \in \one{D}$ and $s_{il}, s_{jk} \in \two{D}$. Moreover, let $\tilde{s} \in \one{D}$ be a curvature sphere of the face-cyclide. Then $\lspan{\tilde{\mathfrak{s}}, \mathfrak{s}_{il}}=\lspan{\tilde{\mathfrak{s}}, \mathfrak{s}_{jk}}=0$ and we conclude that $\tilde{s}\perp \one{n}$. Thus, the spheres in $\one{D}$ are fixed by the Lie inversion $\one{\sigma}$ and, therefore, also the face-cyclide as unparametrized surface.

Furthermore, any contact element along the curvature line going through $f_i$ and $f_l$ is given by $\lspan{s_{il},\tilde{s}}$ for a sphere $\tilde{s}\in \one{D}$. Since 
\begin{equation*}
\one{\sigma}(\mathfrak{s}_{il})= \mathfrak{s}_{jk} \ \ \text{and } \ \one{\sigma}(\tilde{\mathfrak{s}})=\tilde{\mathfrak{s}},
\end{equation*}
these contact elements are mapped to the contact elements of the opposite curvature line of the face-cyclide. Analogous arguments for the other pair of curvature lines complete the proof.
\end{proof}

This lemma also provides a construction for a special congruence 
\begin{equation*}
d:\mathcal{F}\rightarrow G_{(2,1)}(\mathbb{R}^{4,2}) \times G_{(2,1)}(\mathbb{R}^{4,2})
\end{equation*}
of face-cyclides for $f$ from a given face-cyclide $d_{\alpha}=\one{D}_{\alpha} \oplus_{\perp} \two{D}_{\alpha}$ of an initial face~$(ijj'i')$ (notations see Fig.\,\ref{fig_labels}): let  
\begin{equation}\label{equ_d_beta}
\one{D}_{\beta}:= (s_{jj'} \oplus \one{D}_{\alpha}) \cap (\one{n}_{\beta})^{\perp} \ \ \text{and } \ \two{D}_\beta:=(\one{D}_{\beta})^\perp,
\end{equation}
then $d_\beta= \one{D}_{\beta} \oplus \two{D}_\beta$ is a face-cyclide for the face~$(jkk'j')$. Furthermore, defining 
\begin{equation}\label{equ_d}
\begin{aligned}
\two{D}_\gamma&:= (s_{j'k'} \oplus \two{D}_\beta) \cap (\two{n}_{\gamma})^{\perp},
\\\one{D}_\delta&:=(s_{j'j''} \oplus \one{D}_\gamma) \cap (\one{n}_{\delta})^{\perp},
\end{aligned}
\end{equation}
yields unique face-cyclides $d_\gamma$ and $d_\delta$ for the four faces of the vertex-star. This construction consistently extends on all faces and provides a face-cyclide congruence for the discrete Legendre map.

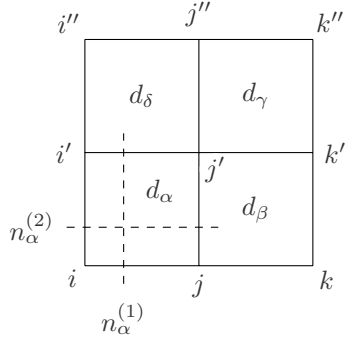
\begin{figure}[tb]
  \begin{tikzpicture}[scale=1.0]
  \def\qsize{1.5}
  \tikzset{
    label node/.style= {
      inner sep=0, outer sep=.1cm, circle, fill=white, fill opacity=0.8
    }
  }
  \draw [xstep=\qsize, ystep=\qsize] (-\qsize,-\qsize) grid (\qsize,\qsize);
  \node [label node, anchor=north east] at (-\qsize, -\qsize) {$i$};
  \node [label node, anchor=east] at (-\qsize, 0) {$i'$};
  \node [label node, anchor=south east] at (-\qsize, \qsize) {$i''$};

  \node [label node, anchor=north] at (0, -\qsize) {$j$};
  \node [label node, anchor=north west] at (0, 0) {$j'$};
  \node [label node, anchor=south] at (0, \qsize) {$j''$};

  \node [label node, anchor=north west] at (\qsize, -\qsize) {$k$};
  \node [label node, anchor=west] at (\qsize, 0) {$k'$};
  \node [label node, anchor=south west] at (\qsize, \qsize) {$k''$};

  \node [label node, anchor=south west] at ($0.5*(-\qsize, -\qsize)$) {$d_\alpha$};
  \node [label node] at ($0.5*(\qsize, -\qsize)$) {$d_\beta$};
  \node [label node] at ($0.5*(\qsize, \qsize)$) {$d_\gamma$};
  \node [label node] at ($0.5*(-\qsize, \qsize)$) {$d_\delta$};

  \draw [dashed] ($0.5*(-\qsize,0)+0.66*(-\qsize,-\qsize)$) --
  ++ ($1.33*(\qsize,0)$);
  \node [label node, anchor=east] at ($0.5*(-\qsize,0)+0.66*(-\qsize,-\qsize)$) {$n_\alpha^{(2)}$};
  \draw [dashed] ($0.5*(0,-\qsize)+0.66*(-\qsize,-\qsize)$) --++ ($1.33*(0,\qsize)$);
  \node [label node, anchor=north] at ($0.5*(0,-\qsize)+0.66*(-\qsize,-\qsize)$) {$n_\alpha^{(1)}$};
\end{tikzpicture}
  \caption{
    Notation used for face cyclides on four quadrilaterals at a common vertex.
  }
  \label{fig_labels}
\end{figure}

Projecting to any M\"obius geometry $\langle \mathfrak{p} \rangle^\perp$, reveals a remarkable property of the just constructed face-cyclide congruence $d$. 

Firstly, observe that the face-cyclides $d_\alpha$ and $d_\beta$ share a common curvature line $c_{\alpha\beta}$, namely the circle of point spheres joining the point spheres $p_j \in f_j$ and $p_{j'} \in f_{j'}$. The contact elements along $c_{\alpha\beta}$ coincide due to Equation \eqref{equ_d_beta}, and hence the two face-cyclides define a piecewise smooth surface that is $C^1$ across the common curvature line. 

In the same way the pairs $\{d_\beta, d_\gamma\}$ and $\{d_\gamma, d_\delta\}$ meet at a common circular curvature line $c_{\beta\gamma}$ and $c_{\gamma\delta}$, respectively. Note that the circles $c_{\alpha\beta}$ and $c_{\beta\gamma}$, as well as $c_{\beta\gamma}$ and $c_{\gamma\delta}$, intersect orthogonally. Therefore, since the circular curvature lines $c_{\alpha\beta}, c_{\beta\gamma}$ and $c_{\gamma\delta}$ of the face-cyclides around the vertex-star go through the common contact element $f_{j'}$, also the face-cyclides $d_\delta$ and $d_\alpha$ meet at a common curvature line $c_{\delta\alpha}$. Since all contact elements of $d_\delta$ and $d_\alpha$ along the common circular curvature line $c_{\delta\alpha}$ contain the curvature sphere $s_{i'j'}$ of $f$, also the contact elements of $d_\delta$ and $d_\alpha$ along $c_{\delta\alpha}$ coincide. 

Thus, with the help of the Lie inversions $\one{\sigma}$ and $\two{\sigma}$, we have constructed a particular face-cyclide congruence, discussed in \cite{cas} and \cite{paper_cyclidic}:
%
%
\begin{defi}
Let $f: \mathcal{V} \rightarrow \mathcal{Z}$ be a discrete Legendre map, then a \emph{cyclidic net of $f$} is a congruence of face-cyclides $d: \mathcal{F} \rightarrow G_{(2,1)}(\mathbb{R}^{4,2}) \times G_{(2,1)}(\mathbb{R}^{4,2})$ such that two face-cyclides adjacent along the edge $(ij)$ share the same contact elements along the curvature direction going through $f_i$ and $f_j$.
\end{defi}

Note that the construction given above only depends on the choice of an initial face-cyclide and is independent of the discrete R-congruence $r$; Lie inversions $\one{\sigma}$ and $\two{\sigma}$ determined by any other  discrete R-congruence of $f$ lead to the same cyclidic net for $f$. 

In particular, this construction gives a cyclidic net for a principal net in a conformal geometry $\lspan{\mathfrak{p}}^\perp$: since its point sphere congruence is a special discrete R-congruence, the corresponding Lie inversions $\one{\sigma}$ and $\two{\sigma}$ descend to M\"obius transformations and can be used to determine the Dupin cyclide patches of a cyclidic net (cf.\,(\ref{equ_d_beta}) and (\ref{equ_d})).
\medskip 

The symmetries described in (\ref{symmetric_configuration}) can be exploited to relate cyclidic nets of a Ribaucour pair: 

\begin{theorem}\label{thm_cyclidic_for_Rib}
Let $(f, \hat{f})$ be a discrete Ribaucour pair related by the facewise constant Lie inversion $\three{\sigma}$. If $d=(\one{D}, \two{D}):\mathcal{F}\rightarrow G_{(2,1)}(\mathbb{R}^{4,2}) \times G_{(2,1)}(\mathbb{R}^{4,2})$ is a cyclidic net of $f$, then 
\begin{equation*}
\hat{d}:\mathcal{F} \rightarrow G_{(2,1)}(\mathbb{R}^{4,2}) \times G_{(2,1)}(\mathbb{R}^{4,2}), \ \ \hat{d}=(\three{\sigma}(\one{D}), \ \three{\sigma}(\two{D}) )
\end{equation*}
provides a cyclidic net for $\hat{f}$.
\end{theorem}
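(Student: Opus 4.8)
The plan is to verify directly that $\hat{d}$ satisfies the two defining properties of a cyclidic net of $\hat f$: (a) for each face $(ijkl)$, the planes $\three{\sigma}(\one{D})$ and $\three{\sigma}(\two{D})$ are complementary $(2,1)$-planes containing the curvature spheres of $\hat f$ in the correct way, i.e.\ $\hat d$ really is a congruence of face-cyclides of $\hat f$; and (b) across each edge, the two adjacent face-cyclides of $\hat d$ share the same contact elements along the curvature direction through the vertices of $\hat f$. Both will follow from the fact that $\three{\sigma}$ is a Lie transformation (linear, contact-preserving, involutive) together with Proposition~\ref{prop_three_n}, which tells us that $\three{\sigma}(\mathfrak{s}_{ij})=\hat{\mathfrak{s}}_{ij}$, $\three{\sigma}(\mathfrak{s}_{jk})=\hat{\mathfrak{s}}_{jk}$, etc., and $\three{\sigma}(\mathfrak{f}_i)=\hat{\mathfrak{f}}_i$ for every vertex of the face.

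For (a): since $d$ is a cyclidic net of $f$, on the face $(ijkl)$ we have $s_{ij},s_{kl}\in\one{D}$ and $s_{il},s_{jk}\in\two{D}$ with $\one{D}\oplus_\perp\two{D}=\mathbb{R}^{4,2}$. Because $\three{\sigma}$ is a linear isometry of $\mathbb{R}^{4,2}$, the images $\three{\sigma}(\one{D})$ and $\three{\sigma}(\two{D})$ are again orthogonally complementary, and they are again $(2,1)$-planes (isometries preserve signature); hence $\hat d$ takes values in $G_{(2,1)}(\mathbb{R}^{4,2})\times G_{(2,1)}(\mathbb{R}^{4,2})$. Applying $\three{\sigma}$ to the incidences $s_{ij}\in\one{D}$ etc.\ and using Proposition~\ref{prop_three_n} gives $\hat s_{ij},\hat s_{kl}\in\three{\sigma}(\one{D})$ and $\hat s_{il},\hat s_{jk}\in\three{\sigma}(\two{D})$, so $\hat d$ is a Dupin cyclide sharing exactly the four curvature spheres of $\hat f$ on that face; thus $\hat d$ is a congruence of face-cyclides of $\hat f$. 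It also needs checking that the four contact elements $\hat f_i,\hat f_j,\hat f_k,\hat f_l$ are contact elements of the patch $\hat d$, but this is immediate: $f_i=\spann{s_{ij},s_{il}}$ is a contact element of $d$, and applying $\three{\sigma}$ gives $\hat f_i=\spann{\hat s_{ij},\hat s_{il}}$, a contact element of $\hat d$, and similarly at $j,k,l$.

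For (b): let $(ij)$ be a shared edge of two faces $F,F'$, and let $c_{ij}$ resp.\ $\hat c_{ij}$ denote the curvature direction of $f$ resp.\ $\hat f$ through the vertices $i,j$ — on both the curvature sphere is $s_{ij}=\hat s_{ij}$ ... wait, more precisely $\three{\sigma}(\mathfrak s_{ij})=\hat{\mathfrak s}_{ij}$, so a contact element of $\hat f$ along $\hat c_{ij}$ has the form $\spann{\hat s_{ij},\tilde t}$ for some sphere $\tilde t$ in the appropriate transported plane. Since $d$ is cyclidic, the face-cyclides $d_F$ and $d_{F'}$ share all contact elements along $c_{ij}$; applying $\three{\sigma}$ to each such contact element and using that $\three{\sigma}$ maps contact elements to contact elements, we conclude that $\hat d_F=\three{\sigma}(d_F)$ and $\hat d_{F'}=\three{\sigma}(d_{F'})$ share all contact elements along $\three{\sigma}(c_{ij})$. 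It remains to identify $\three{\sigma}(c_{ij})$ with the curvature direction $\hat c_{ij}$ of $\hat f$ through $\hat f_i$ and $\hat f_j$: this holds because $c_{ij}$ is the curvature line of $d_F$ joining $f_i$ and $f_j$ and lying on $s_{ij}$, so its $\three{\sigma}$-image is the curvature line of $\hat d_F$ joining $\three{\sigma}(f_i)=\hat f_i$ and $\three{\sigma}(f_j)=\hat f_j$ and lying on $\three{\sigma}(s_{ij})=\hat s_{ij}$, which is exactly $\hat c_{ij}$. Hence $\hat d$ is a cyclidic net of $\hat f$.

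The main obstacle, modest as it is, is bookkeeping: one must be careful that $\three{\sigma}$ is a priori only facewise constant, so the identifications above are valid face by face, and when treating property (b) across an edge shared by faces $F$ and $F'$ one is applying the same $\three{\sigma}$ to both patches only because $\three{\sigma}$ fixes the R-spheres $r_i,r_j$ (Proposition~\ref{prop_three_n}) and because both patches meet along contact elements built from $s_{ij}$ and spheres orthogonal to $r_i,r_j$, which lie in $(\three{\mathfrak n})^\perp$ and are therefore fixed. Once this compatibility is noted, everything reduces to the formal fact that applying a Lie transformation to a cyclidic net of $f$ produces a cyclidic net of the image Legendre map, combined with Proposition~\ref{prop_three_n} identifying that image with $\hat f$.
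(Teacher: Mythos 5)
Your part (a) is fine and matches the paper's first step: $\three{\sigma}$ is a linear isometry preserving oriented contact, so it sends the face-cyclide $d_{ijkl}$ of $f$ to a face-cyclide of $\hat f$ by Proposition~\ref{prop_three_n}. The genuine gap is in part (b), and it sits exactly at the point you flag as ``bookkeeping''. Since $\three{\sigma}$ is only facewise constant, on two faces $F, F'$ adjacent along $(ij)$ you are applying two \emph{different} inversions $\three{\sigma}_F$ and $\three{\sigma}_{F'}$ to the shared contact elements, and you must show they produce the \emph{same} images. Your justification — that these contact elements are spanned by $s_{ij}$ and spheres orthogonal to $r_i, r_j$, which therefore lie in $(\three{\mathfrak{n}})^\perp$ and are fixed — fails twice. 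First, a generic sphere $\tilde{s}\in\two{D}$ of the face-cyclide along that curvature line is not orthogonal to $r_i$: writing $\mathfrak{r}_i=\alpha\mathfrak{s}_{ij}+\beta\mathfrak{s}_{il}$ one gets $\lspan{\tilde{\mathfrak{s}},\mathfrak{r}_i}=\beta\lspan{\tilde{\mathfrak{s}},\mathfrak{s}_{il}}\neq 0$, since $\tilde{s}$ and $s_{il}$ are distinct curvature spheres of the same family of the cyclide. Second, even orthogonality to $r_i$ and $r_j$ would not place a sphere in $(\three{\mathfrak{n}})^\perp$: that inference would require $\three{\mathfrak{n}}\in\spann{\mathfrak{r}_i,\mathfrak{r}_j}$, which is impossible because $\three{\mathfrak{n}}$ is orthogonal to both $r_i$ and $r_j$ while the only such vector in $\spann{\mathfrak{r}_i,\mathfrak{r}_j}$ is zero. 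In fact the adjacent inversions genuinely act differently on the individual spheres of these contact elements, so any argument that they fix those spheres cannot be repaired.

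What saves the theorem is a weaker statement, and it is how the paper argues: the complexes of the two adjacent inversions can be written as $\three{\mathfrak{n}}=\mathfrak{s}_{ij}-\lambda\hat{\mathfrak{s}}_{ij}$ and $\three{\bar{\mathfrak{n}}}=\mathfrak{s}_{ij}-\bar{\lambda}\hat{\mathfrak{s}}_{ij}$, i.e.\ both lie in the pencil spanned by the curvature spheres $s_{ij}$ and $\hat{s}_{ij}$ of the shared edge. By Lemma~\ref{lem_inversion_family}~\ref{lem_inversion_two}, any contact element containing $s_{ij}$ — in particular every contact element of $d_F=d_{F'}$ along the curvature line through $f_i$ and $f_j$ — is mapped by \emph{every} inversion of this pencil to the same contact element, even though the induced sphere correspondences differ. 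This yields the needed agreement of $\three{\sigma}_F$ and $\three{\sigma}_{F'}$ at the level of contact elements, and with that substitution for your compatibility argument the rest of your proof goes through.
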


\begin{proof}
Since corresponding curvature spheres of $f$ and $\hat{f}$ are mapped onto each other by the Lie inversions $\three{\sigma}$ 
and Dupin cyclides are invariant under Lie inversions, $\hat{d}_{ijkl}$ defines a face-cyclide for the face $(ijkl)$ of $\hat{f}$.

Moreover, to prove that $\hat{d}$ indeed provides a cyclidic net of $\hat{f}$, we consider two adjacent faces along an edge $(ij)$: firstly, observe that the Lie inversions $\three{\sigma}_{n}$ and $\three{\sigma}_{\bar{n}}$ belonging to the two adjacent faces are determined by the two linear sphere complexes
\begin{equation*}
\three{\mathfrak{n}}:=\mathfrak{s}_{ij}-\lambda \hat{\mathfrak{s}}_{ij} \ \text{and } \ \three{\bar{\mathfrak{n}}}:=\mathfrak{s}_{ij}-\bar{\lambda} \hat{\mathfrak{s}}_{ij},
\end{equation*} 
where $\mathfrak{s}_{ij} \in s_{ij}$, $\hat{\mathfrak{s}}_{ij} \in \hat{s}_{ij}$ and $\lambda, \bar{\lambda} \in \mathbb{R}$ are appropriately chosen. Thus, by Lemma~\ref{lem_inversion_family}~\ref{lem_inversion_two}, the contact elements of the face-cyclides of $f$ along the common curvature line passing through $f_i$ and $f_j$ are mapped to the same contact elements by $\three{\sigma}_{n}$ and $\three{\sigma}_{\bar{n}}$. Thus, two adjacent face-cyclides of $\hat{d}$ share common contact elements along the curvature line through $\hat{f}_i$ and $\hat{f}_j$. 
\end{proof}


\noindent In \cite[Definition 4.4]{trafo_channel}, the existence of two special Dupin cyclide congruences for a smooth Ribaucour pair of Legendre maps was pointed out. We report on a similar construction in the discrete case: 

\begin{defi}\label{def_rib_cyclides}
Let $f, \hat{f}:\mathcal{V}\rightarrow \mathcal{Z}$ be two envelopes of  a discrete R-congruence. Face-cyclides along a vertical ribbon will be called \emph{R-cyclides of the Ribaucour pair $(f, \hat{f})$}, that is, for a vertical face, a Dupin cyclide $\delta=R \oplus_\perp \tilde{R} \subseteq \mathbb{R}^{4,2}$ satisfying
\begin{equation*}
\mathfrak{s}_{ij}, \hat{\mathfrak{s}}_{ij} \in R \ \ \text{and} \ \  \mathfrak{r}_i, \mathfrak{r}_j \in \tilde{R}.
\end{equation*}
\end{defi}

In Theorem \ref{thm_cyclidic_for_Rib}, we have learned that cyclidic nets for a Ribaucour pair arise in distinguished pairs, where the face-cyclides are related by the Lie inversions $\three{\sigma}$. For these cyclidic nets there exists a canonical choice for the R-cyclides on the vertical ribbons:

\begin{corollary}\label{cor_induced_R_cyclides}
Suppose that $d$ and $\hat{d}$ are cyclidic nets of a Ribaucour pair $(f, \hat{f})$ related by the Lie inversions $\three{\sigma}$. Then for an edge $(ij)$ of the Ribaucour pair, the contact elements along two corresponding curvature lines of $d$ and $\hat{d}$ passing through $f_i$ and $f_j$, as well as $\hat{f}_i$ and $\hat{f}_j$, uniquely determine an R-cyclide for the corresponding vertical face.
\end{corollary}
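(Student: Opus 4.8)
The plan is to read off, from the contact elements along the two prescribed curvature lines, a single $(2,1)$-plane $\tilde R$, to put $R:=\tilde R^{\perp}$, and to check that $\delta:=R\oplus_\perp\tilde R$ is an R-cyclide in the sense of Definition~\ref{def_rib_cyclides} and the only one compatible with the prescribed curvature lines. Recall that the vertical face over an edge $(ij)$ of the Ribaucour pair has contact elements $f_i,f_j,\hat f_j,\hat f_i$, with curvature spheres $s_{ij}=f_i\cap f_j$, $\hat s_{ij}=\hat f_i\cap\hat f_j$ on one pair of opposite edges and $r_i=f_i\cap\hat f_i$, $r_j=f_j\cap\hat f_j$ on the other; so an R-cyclide is a Dupin cyclide $R\oplus_\perp\tilde R$ with $\mathfrak{s}_{ij},\hat{\mathfrak{s}}_{ij}\in R$ and $\mathfrak{r}_i,\mathfrak{r}_j\in\tilde R$. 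Write $c$ for the curvature line of $d$ through $f_i$ and $f_j$ — a well-defined circle on the sphere $s_{ij}$, since $d$ is a cyclidic net — and $\hat c$ for the curvature line of $\hat d$ through $\hat f_i$ and $\hat f_j$. Because $d$ and $\hat d$ are related by $\three{\sigma}$, Theorem~\ref{thm_cyclidic_for_Rib} and Proposition~\ref{prop_three_n} give $\hat c=\three{\sigma}(c)$, and $\three{\sigma}$ fixes $r_i,r_j$ and interchanges $s_{ij},\hat s_{ij}$.

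First I would extract linear data. Let $W_c\subseteq\mathbb{R}^{4,2}$ be the span of all the contact elements occurring along $c$. Along the curvature direction through $f_i$ and $f_j$ these contact elements are the planes $\spann{\mathfrak{s}_{ij},\mathfrak{t}}$ with $\mathfrak{t}$ running over the null cone of the transversal $(2,1)$-plane $\two{D}$ of the face-cyclides of $d$ at $(ij)$; hence $W_c=\spann{\mathfrak{s}_{ij}}\oplus\two{D}$ is $4$-dimensional with radical $\spann{\mathfrak{s}_{ij}}$, it contains the contact elements $f_i$ and $f_j$, and therefore also $\mathfrak{r}_i$ and $\mathfrak{r}_j$. (Only $W_c$, not $\two{D}$, is determined by $c$, but that is all one needs.) Likewise $W_{\hat c}=\three{\sigma}(W_c)$ is $4$-dimensional with radical $\spann{\hat{\mathfrak{s}}_{ij}}$ and contains $\mathfrak{r}_i,\mathfrak{r}_j$. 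Since $\three{\sigma}(\mathfrak{r}_i)=\mathfrak{r}_i$ and $\three{\sigma}(\mathfrak{r}_j)=\mathfrak{r}_j$, these spheres lie in the fixed hyperplane $(\three{\mathfrak{n}})^{\perp}$ of $\three{\sigma}$; and for a non-degenerate vertical face $\lspan{\mathfrak{s}_{ij},\hat{\mathfrak{s}}_{ij}}\neq 0$, so $\hat s_{ij}\notin W_c$ (otherwise $\hat{\mathfrak{s}}_{ij}$ would be orthogonal to the radical $\mathfrak{s}_{ij}$ of $W_c$), whence $\three{\mathfrak{n}}\notin W_c$ because $\hat{\mathfrak{s}}_{ij}\in\spann{\mathfrak{s}_{ij},\three{\mathfrak{n}}}$ while $\mathfrak{s}_{ij}\in W_c$.

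I would then set $\tilde R:=W_c\cap W_{\hat c}$ and $R:=\tilde R^{\perp}$. Writing $W_c=U\oplus\spann{\mathfrak{s}_{ij}}$ with $U:=W_c\cap(\three{\mathfrak{n}})^{\perp}$ (which is $3$-dimensional precisely because $\three{\mathfrak{n}}\notin W_c$) one gets $W_{\hat c}=\three{\sigma}(W_c)=U\oplus\spann{\hat{\mathfrak{s}}_{ij}}$, and since $\spann{\mathfrak{s}_{ij},\hat{\mathfrak{s}}_{ij}}=\spann{\mathfrak{s}_{ij},\three{\mathfrak{n}}}$ meets $(\three{\mathfrak{n}})^{\perp}$ in a line not contained in $W_c$, one finds $\dim(W_c+W_{\hat c})=5$ and $\tilde R=W_c\cap W_{\hat c}=U$. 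Thus $\tilde R$ is $3$-dimensional, contains $\mathfrak{r}_i,\mathfrak{r}_j$, and avoids $\mathfrak{s}_{ij}$ (not orthogonal to $\three{\mathfrak{n}}$), so it is a complement to the radical $\spann{\mathfrak{s}_{ij}}$ in $W_c$; it therefore maps isometrically onto $W_c/\spann{\mathfrak{s}_{ij}}\cong\two{D}$ and is a $(2,1)$-plane, hence $R=\tilde R^{\perp}$ is a $(2,1)$-plane and $\delta=R\oplus_\perp\tilde R$ is a Dupin cyclide. It has $\mathfrak{r}_i,\mathfrak{r}_j\in\tilde R$, while $\mathfrak{s}_{ij}\perp W_c\supseteq\tilde R$ gives $s_{ij}\in R$ and symmetrically $\hat s_{ij}\in R$; moreover its curvature line on $s_{ij}$ through $f_i,f_j$ is $c$ and that on $\hat s_{ij}$ through $\hat f_i,\hat f_j$ is $\hat c$, because $\tilde R$, $\two{D}$ and $\three{\sigma}(\two{D})$ span $W_c$, $W_c$ and $W_{\hat c}$ together with the respective radicals. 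So $\delta$ is an R-cyclide for the vertical face. For uniqueness, any R-cyclide $R'\oplus_\perp\tilde R'$ whose curvature lines through $f_i,f_j$ and through $\hat f_i,\hat f_j$ are $c$ and $\hat c$ must satisfy $\spann{\mathfrak{s}_{ij}}\oplus\tilde R'=W_c$ and $\spann{\hat{\mathfrak{s}}_{ij}}\oplus\tilde R'=W_{\hat c}$, so $\tilde R'\subseteq W_c\cap W_{\hat c}=\tilde R$, and equality of dimensions forces $\tilde R'=\tilde R$ and $\delta'=\delta$. The only step that is not bookkeeping with the symmetric configuration \eqref{symmetric_configuration} is the dimension count $\dim(W_c\cap W_{\hat c})=3$, i.e.\ $\three{\mathfrak{n}}\notin W_c$; I expect this to be the crux, and it rests on the non-degeneracy condition $\lspan{\mathfrak{s}_{ij},\hat{\mathfrak{s}}_{ij}}\neq 0$ for the vertical face.
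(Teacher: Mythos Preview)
Your argument is correct and, at bottom, builds the same $(2,1)$-plane as the paper, but via a more elaborate linear-algebraic route. The paper's proof is essentially a two-liner: since corresponding contact elements along $c$ and $\hat c$ are related by $\three{\sigma}$, each such pair shares a common sphere lying in $(\three{\mathfrak n})^\perp$; this $1$-parameter family of common spheres is in oriented contact with both $s_{ij}$ and $\hat s_{ij}$ and hence directly furnishes the curvature-sphere family $\tilde R$ of a face-cyclide for the vertical face (with $r_i,r_j$ appearing as the common spheres at the endpoints $f_i,f_j$). Your $U=W_c\cap(\three{\mathfrak n})^\perp$ is exactly the span of these common spheres, so the two constructions coincide; the difference is only that you package everything through $W_c$, $W_{\hat c}$ and dimension counts, whereas the paper reads off the spheres pointwise and skips the bookkeeping. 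What your version buys is an explicit uniqueness argument, which the paper leaves implicit.

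One small slip in your write-up: the parenthetical ``$3$-dimensional precisely because $\three{\mathfrak n}\notin W_c$'' is not the right reason. The intersection $W_c\cap(\three{\mathfrak n})^\perp$ has dimension $3$ if and only if $W_c\not\subseteq(\three{\mathfrak n})^\perp$, and that holds because $\mathfrak s_{ij}\in W_c$ with $\lspan{\mathfrak s_{ij},\three{\mathfrak n}}\neq 0$ (equivalently, $\three{\sigma}(\mathfrak s_{ij})\neq\mathfrak s_{ij}$). You already have this fact available from your non-degeneracy discussion, so the conclusion stands; just adjust the justification.
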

\begin{proof}
Since the contact elements along the curvature lines under consideration are related by the Lie inversion $\three{\sigma}$, two corresponding contact elements share a common sphere lying in $(\three{n})^\perp$. In this way, we obtain a 1-parameter family of spheres that are in oriented contact with the spheres $s_{ij}$ and $\hat{s}_{ij}$ and are therefore curvature spheres of a face-cyclide for the vertical face.  
\end{proof}

We deduce that, by construction, the induced R-cyclides investigated in Corollary \ref{cor_induced_R_cyclides} provide (one ribbon of) a cyclidic net along each vertical ribbon. However, observe that two adjacent induced R-cyclides belonging to two vertical ribbons from different coordinate directions do not share a common curvature line. In particular, these Dupin cyclides do not give a 3D cyclidic net as introduced in \cite[Section 3.2]{paper_cyclidic}.

\medskip

\section{Envelopes with spherical curvature lines}\label{section_spherical}

In this section, we will draw attention to the Ribaucour transformation of discrete channel surfaces as discussed in \cite{discrete_channel} and the wider class of discrete Legendre maps with a family of spherical curvature lines.

\subsection{Discrete spherical curvature lines}
Inspired by the classification of spherical curvature lines in the smooth case (see \cite{blaschke, spherical_curv_lines}), we introduce the notion of osculating complexes for discrete Legendre maps.
To obtain uniqueness of the osculating complexes, we suppose a mild genericity condition on the discrete Legendre map. Note that also in the smooth case, uniqueness fails for the class of channel surfaces. 

\medskip

Thus, let $f: \mathcal{V} \rightarrow \mathcal{Z}$ be a discrete Legendre map and fix a point sphere complex $\mathfrak{p}\in \mathbb{R}^{4,2}$, $\lspan{\mathfrak{p},\mathfrak{p}} < 0$. Furthermore, suppose that four consecutive contact elements $f_{i'}, f_i, f_j$ and $f_{j'}$ along a coordinate line are nowhere circular, that is, the four point spheres $p_{i'}, p_i, p_j$ and $p_{j'}$ do not lie on a circle. 

Then the spheres of these four contact elements lie in a unique elliptic linear sphere complex: let $s$ and $\tilde{s}$ be the two oriented spheres that contain the four point spheres $p_{i'}, p_i, p_j$ and $p_{j'}$. Then the sought-after linear sphere complex is given by
\begin{equation*}
\mathfrak{t}:={\lspan{\tilde{\mathfrak{s}}, \mathfrak{s}_{ij}}}\mathfrak{s} - {\lspan{\mathfrak{s}, \mathfrak{s}_{ij}}}\tilde{\mathfrak{s}},
\end{equation*}
where $s_{ij}$ denotes the curvature sphere belonging to the edge $(ij)$. Clearly, the spheres of the contact elements $\mathfrak{f}_i:=\spann{\mathfrak{s}_{ij}, \mathfrak{p}_i}$ and $\mathfrak{f}_j:=\spann{\mathfrak{s}_{ij}, \mathfrak{p}_j}$ lie in $t^\perp$. Moreover, since $s_{ii'}\in f_i$ and $s_{jj'}\in f_j$, also the spheres of the contact elements
\begin{equation*}
f_{i'}=\spann{s_{ii'}, p_i} \ \ \text{and} \ \ f_{j'}=\spann{s_{jj'}, p_j}
\end{equation*}
are contained in $t^\perp$.

\begin{defi}
Let $f:\mathcal{V} \rightarrow \mathcal{Z}$ be a nowhere circular discrete Legendre map, then 
\begin{equation*}
t:\mathcal{E}\rightarrow \mathbb{P}(\mathbb{R}^{4,2}), \ (ij) \mapsto t_{ij}:= \spann{ f_{i'}, f_{i}, f_{j} ,f_{j'}}^\perp
\end{equation*}
are called the \emph{osculating complexes of $f$}.
\end{defi}

\noindent As an immediate consequence of the above considerations, we can characterize spherical curvature lines of a discrete Legendre map:
\begin{prop}\label{spherical_osculating}
The $(1)$-coordinate lines of a nowhere circular discrete Legendre map are spherical if and only if the osculating complexes along each $(1)$-coordinate line are constant. 
\end{prop}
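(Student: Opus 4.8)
The plan is to show both implications by analyzing how the osculating complex $t_{ij}$ changes as we move along a $(1)$-coordinate line. The key structural fact established just above the proposition is that, for four consecutive contact elements $f_{i'}, f_i, f_j, f_{j'}$ along a coordinate line, the osculating complex $t_{ij}$ is the unique elliptic linear sphere complex containing the spheres of all four contact elements; in particular $f_{i'}, f_i, f_j, f_{j'}$ all lie in $t_{ij}^\perp$. So the relevant edges here are the $(1)$-edges $(i'i)$, $(ij)$, $(jj')$ along the same $(1)$-coordinate line, and I want to compare $t_{i'i}$, $t_{ij}$, $t_{jj'}$.

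First I would prove the ``if'' direction. Suppose the osculating complexes along a $(1)$-coordinate line are constant, so $t_{ij} = t$ for every $(1)$-edge $(ij)$ of that line. By the construction recalled before the definition, every contact element $f_i$ along this line satisfies $f_i \subset t^\perp$ (each $f_i$ appears as one of the four contact elements defining two consecutive osculating complexes, all equal to $t$). In particular the point spheres $p_i = f_i \cap p^\perp$ all lie in $t^\perp \cap p^\perp$, which is the point-sphere set of the fixed sphere $s_R$ associated to the elliptic complex $t$ (as described in the discussion of elliptic linear sphere complexes in the preliminaries, an elliptic complex in a M\"obius geometry consists of contact elements meeting a fixed sphere at a fixed angle, so its point spheres lie on that fixed sphere). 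Hence the $(1)$-coordinate line is spherical.

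Next the ``only if'' direction, which I expect to be the main obstacle. Suppose the $(1)$-coordinate line is spherical, i.e.\ all its point spheres $p_k$ lie on a fixed sphere $\mathfrak{s}_\ast$. Fix three consecutive $(1)$-edges with vertices $i'', i', i, j, j', j''$ and consider $t_{i'i}$, $t_{ij}$, $t_{jj'}$. Each of these is the unique elliptic complex through the four relevant consecutive contact elements. The key point is that $t_{ij}$ contains the spheres of $f_{i'}, f_i, f_j, f_{j'}$, while $t_{i'i}$ contains the spheres of $f_{i''}, f_{i'}, f_i, f_j$; these two complexes therefore share the spheres of $f_{i'}, f_i, f_j$ — that is, $t_{i'i}$ and $t_{ij}$ are both orthogonal to the span $\spann{f_{i'}, f_i, f_j}$. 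I would argue this span is $4$-dimensional (using the nowhere-circular genericity hypothesis, three consecutive contact elements sharing curvature spheres span a $4$-space generically), so its orthogonal complement is $2$-dimensional and contains both $\mathfrak{t}_{i'i}$ and $\mathfrak{t}_{ij}$. To pin down that they are actually equal, I use sphericity: the fixed sphere $\mathfrak{s}_\ast$ through all the point spheres must itself lie in that $2$-dimensional orthogonal complement, because each $t$-complex is, M\"obius-geometrically, the complex of contact elements meeting a fixed sphere at a fixed angle, and the fixed sphere of $t_{ij}$ must contain the point spheres $p_{i'}, p_i, p_j, p_{j'}$, forcing it to be $\mathfrak{s}_\ast$ (the unique sphere through four non-concircular point spheres, using nowhere-circularity). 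Thus the elliptic complex determined by an edge is determined by $\mathfrak{s}_\ast$ together with the curvature sphere of that edge — but one checks that the ``fixed angle'' is also forced, so $t_{i'i}$ and $t_{ij}$ coincide; iterating along the whole $(1)$-coordinate line gives constancy.

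The delicate part is this last pinning-down step: showing that the elliptic complex associated to a $(1)$-edge depends only on $\mathfrak{s}_\ast$ and not on the individual edge. I would carry it out concretely using the explicit formula $\mathfrak{t} = \lspan{\tilde{\mathfrak{s}}, \mathfrak{s}_{ij}}\mathfrak{s} - \lspan{\mathfrak{s}, \mathfrak{s}_{ij}}\tilde{\mathfrak{s}}$ from the text: here $s$ and $\tilde s$ are the two oriented spheres through the four point spheres, and sphericity means one of them, say $s$, is the globally fixed $\mathfrak{s}_\ast$ (up to orientation) along the whole line, while $\tilde s$ is its reverse orientation — so $s$ and $\tilde s$ are the two orientations of $\mathfrak{s}_\ast$ and are edge-independent. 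Then $\mathfrak{t}$ is a fixed linear combination of $\mathfrak{s}$ and $\tilde{\mathfrak{s}}$ with coefficients $\lspan{\tilde{\mathfrak{s}}, \mathfrak{s}_{ij}}$ and $\lspan{\mathfrak{s}, \mathfrak{s}_{ij}}$; since $\mathfrak{s}_{ij} \in f_i \subset t^\perp$ forces a relation between these two pairings independent of $j$, one concludes $\lspan{\mathfrak{t}}$ is the fixed complex $\lspan{\mathfrak{s} + \mathfrak{\tilde s}}$ (the complex of all contact elements containing a point sphere on $\mathfrak{s}_\ast$, equivalently intersecting $\mathfrak{s}_\ast$ orthogonally), which is manifestly edge-independent. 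This gives constancy of $t$ along the $(1)$-coordinate line and completes the proof.
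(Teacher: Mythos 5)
Your ``if'' direction is fine: constant osculating complexes force all contact elements of the line into a fixed elliptic complex $t^\perp$, and the point spheres of such a complex lie on a fixed sphere, so the line is spherical. (The paper itself states the proposition as an immediate consequence of the construction preceding the definition, so this is exactly the intended reading of that half.)

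The ``only if'' direction, however, contains a genuine error at the decisive step. Writing $\mathfrak{s},\tilde{\mathfrak{s}}$ for the two orientations of the fixed sphere $s_\ast$, the formula $\mathfrak{t}=\lspan{\tilde{\mathfrak{s}},\mathfrak{s}_{ij}}\mathfrak{s}-\lspan{\mathfrak{s},\mathfrak{s}_{ij}}\tilde{\mathfrak{s}}$ describes the unique point of the projective line $\spann{\mathfrak{s},\tilde{\mathfrak{s}}}=\spann{\mathfrak{p},\mathfrak{m}}$ (with $\mathfrak{m}$ the M\"obius vector of $s_\ast$) that is orthogonal to $\mathfrak{s}_{ij}$; concretely $\mathfrak{t}\propto\lspan{\mathfrak{m},\mathfrak{s}_{ij}}\mathfrak{p}-\lspan{\mathfrak{p},\mathfrak{s}_{ij}}\mathfrak{m}$. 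Your conclusion that this is always $\lspan{\mathfrak{s}+\tilde{\mathfrak{s}}}$, i.e.\ the complex of spheres intersecting $s_\ast$ orthogonally, holds only when $\lspan{\mathfrak{m},\mathfrak{s}_{ij}}=0$, that is, only if every curvature sphere along the line cuts $s_\ast$ at a right angle --- which sphericity does not imply; in general the osculating complex is a genuine mixture of $\mathfrak{p}$ and $\mathfrak{m}$ encoding the (constant, but generally non-right) intersection angle. The sentence ``$\mathfrak{s}_{ij}\in f_i\subset t^\perp$ forces a relation between the two pairings independent of $j$'' does not by itself produce the needed edge-independence, so as written the pinning-down step fails. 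The gap closes with a short argument you almost have: for two consecutive edges $(i'i)$ and $(ij)$, both osculating complexes are orthogonal to the four point spheres defining them, and by sphericity and nowhere-circularity these span the same $4$-space $\spann{\mathfrak{p},\mathfrak{m}}^\perp$; moreover both defining spans contain $f_i$ and $f_j$, hence both complexes are orthogonal to the common curvature sphere $\mathfrak{s}_{ij}$. Since $s_{ij}$ is not a point sphere, orthogonality to $\mathfrak{s}_{ij}$ is a nontrivial linear condition on the $2$-dimensional space $\spann{\mathfrak{p},\mathfrak{m}}$ and singles out one projective point, so $t_{i'i}=t_{ij}$, and constancy follows by iteration. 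Equivalently, the ratio $\lspan{\mathfrak{m},\mathfrak{s}_{ij}}:\lspan{\mathfrak{p},\mathfrak{s}_{ij}}$ is constant along the line because consecutive curvature spheres in $f_i$ differ by a multiple of the point sphere $p_i$, which is orthogonal to both $\mathfrak{p}$ and $\mathfrak{m}$ --- a discrete Joachimsthal statement, which is the correct replacement for your ``fixed angle is forced'' claim.
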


\subsection{Ribaucour transformations of discrete channel surfaces}
Curvature spheres of discrete channel surfaces are constant in the circular direction and hence the corresponding contact elements along the circular parameter lines lie in a parabolic linear sphere complex.
We observe the following property if the spheres of an R-congruence lie in a parabolic subcomplex:

\begin{prop}\label{prop_const_curvsphere_envelope}
A discrete R-congruence admits an envelope with constant curvature spheres along each $(1)$-coordinate line if and only if along each $(1)$-coordinate line the R-spheres lie in a parabolic complex and the map $n^{(2)}$ is constant along each $(1)$-coordinate ribbon.
\end{prop}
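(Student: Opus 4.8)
The plan is to prove both implications directly, constructing the envelope from a single initial contact element via Corollary~\ref{cor_initial_contact_el} and exploiting the transport property of Lemma~\ref{transport_contact}.

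\emph{Sufficiency.} Suppose that along each $(1)$-coordinate line the R-spheres are orthogonal to a common lightlike vector and that $\two{n}$ is constant along each $(1)$-coordinate ribbon. Pick an initial vertex $v_0$ on a $(1)$-line $\ell_0$, choose $\mathfrak{q}_0\in\mathcal{L}$ spanning a parabolic complex that contains the R-spheres along $\ell_0$, and feed the contact element $\spann{\mathfrak{r}_{v_0},\mathfrak{q}_0}$ (genuine, since $\langle\mathfrak{r}_{v_0},\mathfrak{q}_0\rangle=0$) into Corollary~\ref{cor_initial_contact_el} to obtain an envelope $f$. I would then prove by induction over the $(1)$-ribbons that on every $(1)$-line all contact elements of $f$ contain one common sphere; since distinct contact elements meet in a single sphere, that common sphere is the curvature sphere $s^{(1)}$, which is therefore constant along that line. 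Propagation within one $(1)$-line $\ell$ rests on the first hypothesis: if a sphere $\mathfrak{q}$ is orthogonal to $\mathfrak{r}_i$ and $\mathfrak{r}_j$ for an edge $(ij)$ of $\ell$, then $\mathfrak{q}\perp\one{\mathfrak{n}}$ on the faces adjacent to $(ij)$, so $\one{\sigma}$ fixes $\mathfrak{q}$ and, by Lemma~\ref{transport_contact}, the contact element $f_j=\one{\sigma}(f_i)$ again contains $\mathfrak{q}$. Passage from $\ell$ to an adjacent $(1)$-line $\ell'$ rests on the second hypothesis: by Proposition~\ref{prop_inversions_for_r} and Lemma~\ref{transport_contact} the single Lie inversion $\two{\sigma}$ attached to the whole intervening ribbon carries every $f_i$, $i\in\ell$, to $f_l$, $l\in\ell'$, hence carries the common sphere $\mathfrak{q}$ of $\ell$ to the single sphere $\mathfrak{q}':=\two{\sigma}(\mathfrak{q})$, which then lies in every $f_l$. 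Iterating in both coordinate directions from $\ell_0$ yields the required envelope.

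\emph{Necessity.} Conversely, let $f$ be an envelope whose curvature sphere $s^{(1)}$ is constant, equal to $q_\ell$, along each $(1)$-line $\ell$. For $m\in\ell$ both $\mathfrak{r}_m$ and $\mathfrak{q}_\ell$ lie in the null line $f_m$, so $\langle\mathfrak{r}_m,\mathfrak{q}_\ell\rangle=0$; hence the R-spheres of $\ell$ lie in the parabolic complex $\langle\mathfrak{q}_\ell\rangle^\perp$, which is the first asserted condition. For the second, fix a $(1)$-ribbon bounded by $(1)$-lines $\ell,\ell'$ and set $q:=q_\ell$, $\bar q:=q_{\ell'}$. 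On a face $(ijkl)$ of the ribbon with $i,j\in\ell$ and $l,k\in\ell'$ one has $s_{ij}=q$ and $s_{kl}=\bar q$ (these are $(1)$-edges of $\ell$ and $\ell'$), so $\two{\sigma}(q)=\two{\sigma}(s_{ij})\in\two{\sigma}(f_i)\cap\two{\sigma}(f_j)=f_l\cap f_k=s_{kl}=\bar q$ by Lemma~\ref{transport_contact}; thus $\two{\sigma}$ maps $q$ to $\bar q$. Since $q\neq\bar q$, the defining formula \eqref{equ_lie_inversion} forces $\two{\mathfrak{n}}\in\spann{\mathfrak{q},\bar{\mathfrak{q}}}$, while with the normalisation of \eqref{choice_coord_spheres} one has $\two{\mathfrak{n}}=\mathfrak{r}_j-\mathfrak{r}_k\in\spann{\mathfrak{r}_j,\mathfrak{r}_k}$. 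The neighbouring face of the ribbon across the $(2)$-edge $(jk)$ has the same spheres $q,\bar q$, so the same reasoning puts its $\two{\mathfrak{n}}$ on both $\spann{\mathfrak{q},\bar{\mathfrak{q}}}$ and $\spann{\mathfrak{r}_j,\mathfrak{r}_k}$. These two projective lines are distinct (as $q$ differs from $r_j$ and $r_k$) and share a point, hence meet only in that point, so the two values of $\two{n}$ coincide; induction along the ribbon completes the proof.

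The step I expect to be the main obstacle is the incidence argument in the necessity part: recognising that an enveloping Legendre map with constant $s^{(1)}$ forces $\two{\sigma}$ to interchange the two characteristic spheres $q,\bar q$ of adjacent $(1)$-lines — which confines $\two{\mathfrak{n}}$ to the fixed line $\spann{\mathfrak{q},\bar{\mathfrak{q}}}$ — and then pinning $\two{n}$ down through its second incidence $\two{\mathfrak{n}}\in\spann{\mathfrak{r}_j,\mathfrak{r}_k}$ along a shared $(2)$-edge. As elsewhere in this section, the mild genericity assumptions (no umbilic faces, so $q\neq\bar q$; R-spheres distinct from curvature spheres) are what make the relevant lines distinct and transversal, and the degenerate configurations require the obvious adjustments.
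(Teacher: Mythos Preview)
Your proof is correct and follows essentially the same route as the paper: sufficiency by seeding the envelope with the sphere of the parabolic complex and propagating via $\one{\sigma}$ and the ribbon-constant $\two{\sigma}$, necessity by observing that $\two{\sigma}$ must interchange the two constant curvature spheres $q,\bar q$ and hence $\two{n}$ is pinned down on each face. The only cosmetic difference is that the paper verifies the latter by an explicit linear-independence computation while you phrase it as the incidence $\two{n}=\spann{q,\bar q}\cap\spann{r_j,r_k}$, which the paper itself records as the geometric interpretation of its calculation.
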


\begin{proof}
Suppose that $r:\mathcal{V}\rightarrow \mathbb{P}(\mathcal{L})$ is a discrete R-congruence admitting an envelope with a constant curvature sphere along each $(1)$-coordinate line. Then, along each such coordinate line the constant curvature sphere and the R-spheres are in oriented contact and therefore lie in a fixed parabolic linear sphere complex. 

Furthermore, let us consider a $(1)$-coordinate ribbon bounded by two $(1)$-coordinate lines $\gamma_{i}$ and $\gamma_{i+1}$.  Moreover, without loss of generality, we choose homogeneous coordinates such that the induced Lie inversions $\two{\sigma}_\alpha$ and $\two{\sigma}_\beta$ of two faces adjacent along the edge $(jk)$ of the coordinate ribbon are given by 
\begin{equation*}
\mathfrak{n}_\alpha^{(2)}:= \mathfrak{r}_j - \mathfrak{r}_k \ \ \text{and } \ \ \mathfrak{n}_\beta^{(2)}:= \mathfrak{r}_j - \lambda_\beta \mathfrak{r}_k,
\end{equation*}
where $\lambda_\beta \in \mathbb{R}\setminus \{ 0 \}$ is a  suitable constant.

Then, denoting the constant curvature spheres along the coordinate line $\gamma_{i}$ by $\one{s}_{i}$, we conclude that the spheres given by $\sigma_{\mathfrak{n}_\alpha^{(2)}}(\one{\mathfrak{s}}_i)$ and $\sigma_{\mathfrak{n}_\beta^{(2)}}(\one{\mathfrak{s}}_i)$ have to coincide. Hence, there exists a constant $c \in \mathbb{R}\setminus \{ 0\}$ such that
\begin{align*}
0&=\sigma_{\mathfrak{n}_\alpha^{(2)}}(\one{\mathfrak{s}}_{i})- c \sigma_{\mathfrak{n}_\beta^{(2)}}(\one{\mathfrak{s}}_{i})
\\&= (1-c)\one{\mathfrak{s}}_{i} + (1-c \lambda_\beta)\frac{\lspan{\one{\mathfrak{s}}_{i},\mathfrak{r}_k}}{\lspan{\mathfrak{r}_k,\mathfrak{r}_j}}  \mathfrak{r}_k + (c-1)\frac{\lspan{\one{\mathfrak{s}}_{i},\mathfrak{r}_k}}{\lspan{\mathfrak{r}_k,\mathfrak{r}_j}} \mathfrak{r}_j.
\end{align*}
Therefore, since $r_j, r_k$ and $\one{s}_{i}$ are linearly independent, each scalar factor has to vanish and we conclude that $n^{(2)}_\alpha = \two{n}_\beta$. So it is constant along each $(1)$-coordinate ribbon. Geometrically, the constant $\two{n}$ is the intersection of the lines $\lspan{\one{s}_i, \one{s}_{i+1}}$ and $\lspan{r_j, r_k}$.

Conversely, assume that along each $(1)$-coordinate line the R-spheres lie in a fixed parabolic complex. Then, in particular, along each $(1)$-coordinate line all R-spheres are in oriented contact with the constant curvature sphere along this coordinate line. Furthermore, since the map $n^{(2)}$ is constant along each $(1)$-coordinate ribbon, the choice of an initial contact element containing the corresponding constant curvature sphere reveals the sought-after envelope of the R-congruence (cf.\,Lemma \ref{transport_contact}).  
\end{proof}

\noindent We recall that a discrete Legendre map is a discrete channel surface in the sense of \cite{discrete_channel}, if it admits a face-cyclide congruence which is constant along one family of coordinate ribbons. 

In particular, discrete channel surfaces can be characterized by special properties of their curvature sphere congruences \cite[Proposition 2.4]{discrete_channel}: a discrete Legendre map is a discrete channel surface with circular $(1)$-direction if and only if the curvature sphere congruence $s^{(1)}$ is constant along any $(1)$-coordinate line and the curvature spheres $s^{(2)}$ determine a fixed $(2,1)$-plane along each $(1)$-coordinate ribbon.

\begin{theorem}\label{thm_two_channel}
A discrete R-congruence is enveloped by two discrete channel surfaces with circular $(1)$-direction if and only if the map $n^{(2)}$ is constant along each $(1)$-coordinate ribbon and the R-spheres along each $(1)$-coordinate line are curvature spheres of a Dupin cyclide.   
\end{theorem}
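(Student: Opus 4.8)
The plan is to prove both implications by combining Proposition~\ref{prop_const_curvsphere_envelope} with the curvature-sphere characterization of discrete channel surfaces recalled just above the statement (\cite[Proposition 2.4]{discrete_channel}). For the forward direction, assume that $r:\mathcal{V}\rightarrow\mathbb{P}(\mathcal{L})$ is enveloped by two discrete channel surfaces $f$ and $\hat{f}$ with circular $(1)$-direction. Each of them has its $s^{(1)}$-congruence constant along every $(1)$-coordinate line, so in particular both $f$ and $\hat{f}$ have constant curvature spheres along each $(1)$-coordinate line. Applying Proposition~\ref{prop_const_curvsphere_envelope} to either one of them already yields that $n^{(2)}$ is constant along each $(1)$-coordinate ribbon and that the R-spheres along each $(1)$-coordinate line lie in a parabolic complex. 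It remains to upgrade ``parabolic complex'' to ``curvature spheres of a Dupin cyclide'', i.e.\ to show the R-spheres along a $(1)$-coordinate line span a $(2,1)$-plane. Here I would use the second channel-surface condition: for $f$ the curvature spheres $s^{(2)}$ determine a fixed $(2,1)$-plane $D$ along the ribbon, and the same holds for $\hat{f}$ with a plane $\hat{D}$. Since along the $(1)$-coordinate line $\gamma_i$ the R-spheres $r$, the curvature sphere $s^{(1)}_i$ (constant) and the curvature spheres $s^{(2)}$ all lie in the ribbon's configuration, I would argue that the R-spheres together with $s^{(1)}_i$ lie in the fixed $(2,1)$-plane $D$ (they are in oriented contact with all of $D^{\perp}$-type data in the appropriate sense), and hence the R-spheres along $\gamma_i$ span (a subspace of) a $(2,1)$-plane, making them curvature spheres of a Dupin cyclide.

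For the converse, assume $n^{(2)}$ is constant along each $(1)$-coordinate ribbon and that along each $(1)$-coordinate line the R-spheres are curvature spheres of a Dupin cyclide $d=\one{D}\oplus_\perp\two{D}$ with $r\in\one{D}$, a $(2,1)$-plane. Then there is a $1$-parameter family of spheres in $\two{D}$ in oriented contact with all R-spheres of that line; any fixed such sphere $s^{(1)}_i$ together with each $r_v$ spans a contact element, and these contact elements define a discrete Legendre curve along $\gamma_i$ that lies in the fixed parabolic complex determined by $s^{(1)}_i$. In particular the R-spheres along $\gamma_i$ lie in that parabolic complex, so the hypotheses of Proposition~\ref{prop_const_curvsphere_envelope} are met and we obtain an envelope $f$ with constant curvature sphere along each $(1)$-coordinate line. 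To see that $f$ is actually a channel surface (not merely an envelope with constant $s^{(1)}$), I would verify the missing condition from \cite[Proposition 2.4]{discrete_channel}, namely that its $s^{(2)}$-curvature spheres span a fixed $(2,1)$-plane along each $(1)$-coordinate ribbon: along the ribbon the two chosen constant curvature spheres $s^{(1)}_i,s^{(1)}_{i+1}$ and the R-spheres $r_j,r_k$ on the separating edge all lie in the $(2,1)$-planes of the relevant face-cyclides, and constancy of $n^{(2)}$ glues these into one fixed $(2,1)$-plane; the curvature spheres $s^{(2)}=s_{jk}$ of $f$ lie in it. To produce a \emph{second} channel-surface envelope, I would choose a different constant curvature sphere (another sphere of $\two{D}$ along $\gamma_i$, consistently propagated by the constant $n^{(2)}$ along each ribbon), obtaining $\hat{f}\neq f$ by the same argument.

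The main obstacle I expect is the bookkeeping in the forward direction that turns the parabolic-complex statement into the stronger Dupin-cyclide (i.e.\ $(2,1)$-plane) statement: one must show that the span of the R-spheres along a single $(1)$-coordinate line has signature exactly $(2,1)$ rather than being a degenerate subspace of the parabolic complex, and the clean way to get this is precisely to feed in the $s^{(2)}$-plane condition for one of the two channel envelopes and check that $r$ and $s^{(1)}_i$ live in that $(2,1)$-plane. A secondary subtlety is making sure that, in the converse, the two channel envelopes are genuinely distinct and that the chosen constant curvature spheres can be propagated consistently across ribbons — this follows because $n^{(2)}$ is constant along ribbons (so the relevant Lie inversions $\two{\sigma}$ are constant along ribbons, by the remark after Proposition~\ref{prop_inversions_for_r}), which forces the line $\langle s^{(1)}_i, s^{(1)}_{i+1}\rangle$ to pass through the fixed point $\two{n}$, exactly as in the geometric description at the end of the proof of Proposition~\ref{prop_const_curvsphere_envelope}. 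Everything else is a direct assembly of Proposition~\ref{prop_const_curvsphere_envelope}, Lemma~\ref{transport_contact}, and the cited characterization of discrete channel surfaces.
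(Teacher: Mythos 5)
Your overall architecture is the same as the paper's (Proposition~\ref{prop_const_curvsphere_envelope} combined with the curvature-sphere characterization of discrete channel surfaces), and your converse is essentially the paper's: choose initial contact elements $\spann{r_{i_0},s_{i_0}}$ and $\spann{r_{i_0},\hat{s}_{i_0}}$ with $s_{i_0},\hat{s}_{i_0}$ taken from the $(2,1)$-plane orthogonal to the plane of the R-spheres, and propagate using the constant $\two{n}$; if anything, you are more explicit than the paper about verifying the second channel condition. The genuine gap is in your forward direction, in the upgrade from ``parabolic complex'' to ``$(2,1)$-plane''. You claim that the R-spheres and the constant curvature sphere $s_i$ of $f$ lie in the fixed $(2,1)$-plane $D$ spanned by the $\two{s}$-curvature spheres of $f$ along the ribbon. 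This is false: every $\two{s}$-sphere of the ribbon shares a contact element with $s_i$, so $\mathfrak{s}_i\in D^{\perp}$, and since $D$ is nondegenerate, $s_i\notin D$; likewise an R-sphere $r_j$ at a vertex of $\gamma_i$ only satisfies $\mathfrak{r}_j\in\lspan{\mathfrak{s}_i}\oplus D$, a $4$-dimensional degenerate space. With data from $f$ alone the component of $\mathfrak{r}_j$ along $\mathfrak{s}_i$ need not vanish, so a single channel envelope does not force the R-spheres of a circular line into a $(2,1)$-plane, and the check you propose (``$r$ and $s^{(1)}_i$ live in that $(2,1)$-plane'') cannot succeed.

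The missing idea---and the place where the second channel surface must actually be used, whereas in your sketch $\hat{f}$ and $\hat{D}$ are mentioned but never enter the argument---is that each $r_j$ along $\gamma_i$ also lies in $\hat{f}_j$ and is therefore in oriented contact with the constant curvature sphere $\hat{s}_i$ of $\hat{f}$. Hence $\mathfrak{r}_j\in\bigl(\lspan{\mathfrak{s}_i}\oplus D\bigr)\cap\lspan{\hat{\mathfrak{s}}_i}^{\perp}$, which is exactly the image $\pi(D)$ of $D$ under the projection $\pi(\tau)=\tau-\frac{\lspan{\tau,\hat{\mathfrak{s}}_i}}{\lspan{\mathfrak{s}_i,\hat{\mathfrak{s}}_i}}\mathfrak{s}_i$ (defined since $s_i$ and $\hat{s}_i$ are not in oriented contact). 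Because $\mathfrak{s}_i\perp D$, the restriction $\pi|_D$ is an isometry, so $\pi(D)$ is again a $(2,1)$-plane and the R-spheres along the line are curvature spheres of a Dupin cyclide. This projection argument is precisely how the paper closes the step you identified as the main obstacle; without it (or some equivalent use of $\hat{s}_i$), your forward implication is not proved.
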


\begin{proof}
  Let $r:\mathcal{V}\rightarrow \mathbb{P}(\mathcal{L})$ be a discrete R-congruence enveloped by two discrete channel surfaces $f$ and $\hat{f}$ with circular $(1)$-direction. Then, by Proposition \ref{prop_const_curvsphere_envelope}, the map $\two{n}$ is constant along each $(1)$-coordinate ribbon.

To prove the second property of the discrete R-congruence, let us consider a $(1)$-coordinate ribbon and denote the constant curvature spheres of $f$ and $\hat{f}$ along the two boundary $(1)$-coordinate lines by $s_i, s_{i+1}, \hat{s}_i$ and $\hat{s}_{i+1}$, respectively. Furthermore, by \cite[Proposition 2.4]{discrete_channel}, the other family of curvature spheres of $f$ along this $(1)$-coordinate ribbon lies in a $(2,1)$-plane $D_i$. 

Defining the projection onto $\lspan{\hat{s}_i}^\perp$
\begin{equation*}
\pi(\tau)=\tau - \frac{\lspan{\tau, \hat{s}_i}}{\lspan{s_i, \hat{s}_i}}s_i,
\end{equation*}
we deduce that all R-spheres along the $(1)$-coordinate line lie in the $(2,1)$-plane $\pi(D_i)$. Therefore, the R-spheres are curvature spheres of a Dupin cyclide.

\medskip

Conversely, suppose that the R-spheres along a $(1)$-coordinate line $\gamma_{i_0}$ are curvature spheres of a Dupin cyclide, that is, they lie in a $(2,1)$-plane $C_{i_0}$. Then, any choice of two initial contact elements $f_0:=\spann{r_{i_0}, s_{i_0}}$ and $\hat{f}_0:=\spann{r_{i_0}, \hat{s}_{i_0}}$, where $s_{i_0}, \hat{s}_{i_0} \in C_{i_0}^\perp$ provides two enveloping discrete channel surfaces.
\end{proof}


As an immediate consequence of the 1-parameter freedom in the choice of the initial contact element $f_0$ in the proof of Theorem \ref{thm_two_channel}, we obtain the following corollary: 

\begin{corollary}\label{cor_channel_family}
If a discrete R-congruence admits two discrete channel surfaces as envelopes, then there exists a 1-parameter family of enveloping discrete channel surfaces.
\end{corollary}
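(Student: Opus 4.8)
The plan is to obtain the corollary by upgrading the ``any choice'' that appears in the converse half of the proof of Theorem~\ref{thm_two_channel} into a genuine $1$-parameter family. First I would invoke Theorem~\ref{thm_two_channel}: since the discrete R-congruence $r$ is assumed to carry two discrete channel surfaces with circular $(1)$-direction as envelopes, the map $\two{n}$ is constant along each $(1)$-coordinate ribbon and, along every $(1)$-coordinate line $\gamma_i$, the R-spheres span a $(2,1)$-plane $C_i \subset \mathbb{R}^{4,2}$ whose spheres are curvature spheres of a Dupin cyclide. These are exactly the hypotheses fed into the converse construction in the proof of Theorem~\ref{thm_two_channel}.

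Next I would fix one $(1)$-coordinate line $\gamma_{i_0}$ and inspect the orthogonal complement $C_{i_0}^\perp$. Because $\mathbb{R}^{4,2}$ has signature $(4,2)$ and $C_{i_0}$ has signature $(2,1)$, the complement $C_{i_0}^\perp$ is again a non-degenerate $(2,1)$-plane, so $C_{i_0}^\perp \cap \mathbb{P}(\mathcal{L})$ is a non-degenerate conic, hence a genuine $1$-parameter family of oriented spheres. Moreover, each R-sphere $r_{i_0}$ lies in $C_{i_0}$, so $C_{i_0}^\perp \subset r_{i_0}^\perp$ and every sphere $s_{i_0} \in C_{i_0}^\perp \cap \mathbb{P}(\mathcal{L})$ is in oriented contact with $r_{i_0}$; thus $f_0 := \spann{r_{i_0}, s_{i_0}}$ is a legitimate initial contact element. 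Since $C_{i_0}$ is non-degenerate, $r_{i_0} \notin C_{i_0}^\perp$, so distinct choices of $s_{i_0}$ produce distinct contact elements $f_0$. For each such $s_{i_0}$, Corollary~\ref{cor_initial_contact_el} determines a unique envelope of $r$ from $f_0$, and the converse part of the proof of Theorem~\ref{thm_two_channel} identifies this envelope as a discrete channel surface with circular $(1)$-direction (this is where the constancy of $\two{n}$ along $(1)$-ribbons is used, via Proposition~\ref{prop_const_curvsphere_envelope} and Lemma~\ref{transport_contact}). Since the assignment $f_0 \mapsto$ envelope is injective by Corollary~\ref{cor_initial_contact_el}, the spheres $s_{i_0}$ parametrize a $1$-parameter family of enveloping discrete channel surfaces.

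I do not expect a substantial obstacle: the essential work was already carried out inside Theorem~\ref{thm_two_channel}. The only points requiring a little care are the signature bookkeeping --- confirming that $C_{i_0}^\perp$ meets the Lie quadric in a $1$-dimensional family rather than degenerating to a point or the empty set --- and the injectivity of $s_{i_0} \mapsto f_0 \mapsto$ envelope, which is what guarantees that ``$1$-parameter family'' is not an overstatement.
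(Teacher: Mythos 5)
Your proposal is correct and follows exactly the route the paper intends: the corollary is drawn from the 1-parameter freedom in choosing the initial sphere $s_{i_0}\in C_{i_0}^\perp$ (equivalently, the initial contact element $f_0=\spann{r_{i_0},s_{i_0}}$) in the converse part of the proof of Theorem~\ref{thm_two_channel}, combined with the uniqueness of envelopes from Corollary~\ref{cor_initial_contact_el}. Your added checks (that $C_{i_0}^\perp$ is a $(2,1)$-plane meeting the Lie quadric in a genuine 1-parameter family, and injectivity of $s_{i_0}\mapsto f_0\mapsto$ envelope) simply make explicit what the paper leaves implicit.
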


\noindent Furthermore, we remark that Theorem \ref{thm_two_channel} reveals how the constructions given in  Subsection \ref{subsection_construction} yield discrete R-congruences admitting a 1-parameter family of discrete channel surfaces in their Ribaucour families. 

In particular, suppose that a discrete R-congruence consists of point spheres and satisfies the conditions of Theorem \ref{thm_two_channel}. Geometrically, those discrete R-congruences are provided by circular nets where the point spheres of one family of coordinate lines lie on circles such that two adjacent ones are related by a M\"obius inversion. Then, by Corollary \ref{cor_channel_family}, there exists a 1-parameter choice of contact elements such that the constructed principal net provides a discrete channel surface, that is, the principal net has indeed a constant curvature sphere along each coordinate line of one family (for details see also \cite{discrete_channel}). 

\medskip

\noindent Using the R-cyclides of a discrete Ribaucour pair given in Definition \ref{def_rib_cyclides}, we observe that the geometric structure of two enveloping discrete channel surfaces is also reflected in the geometry of the vertical faces of the Ribaucour pair: 

\begin{corollary}
A Ribaucour pair consists of two discrete channel surfaces with circular $(1)$-direction if and only if along each vertical $(1)$-coordinate ribbon there exists a constant R-cyclide.
\end{corollary}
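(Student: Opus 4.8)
The plan is to translate the two stated characterizations of discrete channel surfaces into the language of R-cyclides along a vertical $(1)$-ribbon, using the symmetric configuration \eqref{symmetric_configuration} and Proposition~\ref{prop_const_curvsphere_envelope}. Recall that a vertical $(1)$-ribbon of the Ribaucour pair $(f,\hat f)$ is built from the contact elements $f_i, \hat f_i$ along a $(1)$-coordinate line, and that an R-cyclide on a vertical face over the edge $(ij)$ is a Dupin cyclide $\delta = R \oplus_\perp \tilde R$ with $\mathfrak{s}_{ij}, \hat{\mathfrak{s}}_{ij} \in R$ and $\mathfrak{r}_i, \mathfrak{r}_j \in \tilde R$ (Definition~\ref{def_rib_cyclides}). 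Since two adjacent vertical faces in a $(1)$-ribbon share the common R-sphere $\mathfrak{r}_j$ and the common curvature spheres $\mathfrak{s}_{jk}$ of $f$ and $\hat{\mathfrak s}_{jk}$ of $\hat f$, the condition that the R-cyclide be \emph{constant along the ribbon} means that a single pair of complementary $(2,1)$-planes $(R,\tilde R)$ works for every vertical face in it; equivalently, $R$ is spanned by the $(1)$-curvature spheres $s_i := s^{(1)}$ of $f$ and $\hat s_i$ of $\hat f$ along that coordinate line, and $\tilde R$ is spanned by all the R-spheres along it.

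First I would prove the forward direction. Assume $f$ and $\hat f$ are both discrete channel surfaces with circular $(1)$-direction. Then $s^{(1)}$ is constant along each $(1)$-coordinate line for both $f$ and $\hat f$ (by \cite[Proposition 2.4]{discrete_channel}), so there are unique spheres $s_i, \hat s_i$ for the whole line; moreover by Theorem~\ref{thm_two_channel} the R-spheres along this line are curvature spheres of a Dupin cyclide, i.e.\ they span a fixed $(2,1)$-plane $\tilde R$. Set $R := \tilde R^\perp$; I would check $R$ is a $(2,1)$-plane and that $s_i, \hat s_i \in R$, using that $s_i$ (resp.\ $\hat s_i$) is in oriented contact with every R-sphere along the line (the constant curvature sphere of a channel surface is orthogonal to all the R-spheres in its ribbon). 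Then $\delta = R \oplus_\perp \tilde R$ is a Dupin cyclide whose restriction to each vertical face $(ij)$ satisfies $\mathfrak{s}_{ij},\hat{\mathfrak s}_{ij}\in R$ — because $s_i$ \emph{is} the curvature sphere $s^{(1)}$ on that edge for $f$, and likewise for $\hat f$ — and $\mathfrak{r}_i,\mathfrak{r}_j\in\tilde R$; hence it is a constant R-cyclide along the vertical $(1)$-ribbon.

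Conversely, suppose along each vertical $(1)$-ribbon there is a constant R-cyclide $\delta = R \oplus_\perp \tilde R$. From $\mathfrak{r}_i, \mathfrak{r}_j \in \tilde R$ for every edge $(ij)$ of the underlying $(1)$-line, all R-spheres along that line lie in the fixed $(2,1)$-plane $\tilde R$, so they are curvature spheres of a Dupin cyclide; this is exactly one of the two conditions in Theorem~\ref{thm_two_channel}. For the other condition, I would argue that the two curvature spheres $s_{ij}, s_{jk}$ of $f$ on consecutive edges of the $(1)$-line both lie in $R$ (the constant $(2,1)$-plane), and both are in oriented contact with $r_j$; a $(2,1)$-plane contains at most one sphere orthogonal to a given sphere in a complementary configuration — more carefully, since $f$ is a Legendre map the spheres $s_{ij}$ and $s_{jk}$ are distinguished inside $R$ only through orthogonality to the $(1)$-direction, and the constancy of $\delta$ forces them to coincide. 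Thus $s^{(1)}$ is constant along each $(1)$-line for $f$, and symmetrically for $\hat f$; together with the Dupin-cyclide property of the R-spheres, Theorem~\ref{thm_two_channel} gives that $r$ is enveloped by two discrete channel surfaces, and since $f, \hat f$ are those envelopes with the required constancy, the Ribaucour pair consists of two channel surfaces with circular $(1)$-direction.

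The main obstacle I anticipate is the precise bookkeeping in the converse: extracting \emph{constancy of the individual curvature sphere congruence} $s^{(1)}$ (not merely of the spanning plane $R$) from constancy of the R-cyclide. One has to use that on a vertical face the R-cyclide's $R$-plane is pinned down as the orthogonal complement of $\tilde R = \langle r_i, r_j, s_{il}, s_{jk}\rangle$-type data, and then invoke the characterization in \cite[Proposition 2.4]{discrete_channel} that the $(2,1)$-plane of the \emph{other} curvature sphere family being fixed along a ribbon, combined with the constancy extracted here, yields genuine constancy of $s^{(1)}$; alternatively one can route the argument through Proposition~\ref{prop_const_curvsphere_envelope} applied to the R-congruence and read off constancy of $n^{(2)}$ along the ribbon directly from the constant R-cyclide's curvature data on the vertical faces. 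Apart from this point, the proof is a direct unwinding of definitions combined with Theorem~\ref{thm_two_channel} and the configuration \eqref{symmetric_configuration}.
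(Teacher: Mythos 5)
Your forward direction is fine and is essentially the paper's argument (the vertical $(1)$-ribbons are ribbons of discrete channel surfaces, so they carry a constant face-cyclide, which is by definition an R-cyclide; your explicit construction $R:=\tilde R^{\perp}$ is a hands-on version of this). The converse, however, contains a genuine gap, and it sits exactly at the point you yourself flag as the ``main obstacle'': extracting constancy of the individual curvature sphere congruences $s^{(1)}$ and $\hat{s}^{(1)}$ along the $(1)$-coordinate lines from the constancy of the R-cyclide. The concrete argument you offer does not work: since $\mathfrak{r}_j\in\tilde R=R^{\perp}$, \emph{every} sphere of $R$ is orthogonal to $r_j$, so the statement ``a $(2,1)$-plane contains at most one sphere orthogonal to a given sphere'' distinguishes nothing -- $R$ contains a whole cone of such spheres. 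The remainder of your converse then hangs on this missing step: Proposition~\ref{prop_const_curvsphere_envelope} (needed to obtain the $\two{n}$-constancy hypothesis of Theorem~\ref{thm_two_channel}) requires an envelope with constant curvature spheres along the $(1)$-lines, and your closing phrase ``since $f,\hat f$ are those envelopes with the required constancy'' presupposes precisely the constancy that was to be proven. Listing alternatives one ``could invoke'' does not close the argument.

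The step can be closed by a different observation, which is also what makes the paper's terse converse work: the curvature spheres $s_{ij}$ and $s_{jk}$ of $f$ on consecutive $(1)$-edges lie in the common contact element $f_j$, hence $\lspan{\mathfrak{s}_{ij},\mathfrak{s}_{jk}}=0$; both are lightlike and, by the constancy of the R-cyclide, both lie in the fixed Lorentzian $(2,1)$-plane $R$, and two mutually orthogonal null lines in a Lorentzian space coincide. Hence $s^{(1)}$ (and likewise $\hat{s}^{(1)}$) is constant along each $(1)$-line. From this, Proposition~\ref{prop_const_curvsphere_envelope} gives that $\two{n}$ is constant along each $(1)$-coordinate ribbon, the R-spheres lie in the fixed $(2,1)$-plane $\tilde R$, and $f$ and $\hat f$ are exactly envelopes of the type constructed in the converse of Theorem~\ref{thm_two_channel}, since their contact elements are spanned by the R-spheres and spheres of $R=\tilde R^{\perp}$; therefore they are discrete channel surfaces with circular $(1)$-direction. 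Apart from this missing ingredient, your route (Theorem~\ref{thm_two_channel} combined with Proposition~\ref{prop_const_curvsphere_envelope} and \cite[Proposition~2.4]{discrete_channel}) coincides with the paper's.
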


\begin{proof}
Suppose that a discrete R-congruence is enveloped by two discrete channel surfaces $f$ and $\hat{f}$ with circular $(1)$-direction. Then the contact elements of each $(1)$-vertical coordinate ribbon provide a coordinate ribbon of a discrete channel surface: according to Theorem \ref{thm_two_channel}, the curvature spheres along the vertical ribbon, namely the R-spheres, lie in a $(2,1)$-plane. Furthermore, the other curvature spheres of the vertical Legendre map, given by the curvature spheres of $f$ and $\hat{f}$, are constant (cf.\,\cite[Proposition~2.4]{discrete_channel}) 

Hence, since the $(1)$-vertical Legendre maps are discrete channel surfaces, there exists a constant face-cyclide along each of these coordinate ribbons, which is then by definition also an R-cyclide of the Ribaucour pair $(f, \hat{f})$.

Conversely, if along each $(1)$-vertical ribbon there exists a constant R-cyclide, the R-spheres along each ribbon lie in a fixed $(2,1)$-plane and, by Proposition \ref{prop_const_curvsphere_envelope}, the map $\one{n}$ is constant along each $(1)$-coordinate ribbon of the Ribaucour pair. Thus, the claim follows from Theorem \ref{thm_two_channel}. 
\end{proof}

To conclude this section we remark on a general property of Ribaucour transforms of discrete channel surfaces. This also gives insights into the geometry of the other envelopes of a Ribaucour family containing a 1-parameter family of discrete channel surfaces.

\begin{prop}
The Ribaucour transforms of a discrete channel surface have a family of discrete spherical curvature lines.
\end{prop}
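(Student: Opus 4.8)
The plan is to prove the sharper statement that, along each $(1)$-coordinate line, \emph{every} contact element of a Ribaucour transform $\hat{f}$ of a discrete channel surface $f$ (with circular $(1)$-direction) lies in one common linear sphere complex. By the osculating complex characterization of spherical curvature lines (Proposition~\ref{spherical_osculating}), this common complex is then the osculating complex of $\hat{f}$ at every edge of the line, hence constant along it, and the $(1)$-coordinate lines of $\hat{f}$ form the desired family of spherical curvature lines. The whole argument rests on a dimension bound for the R-spheres along a $(1)$-coordinate line of $f$.

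To obtain that bound I would first fix a discrete R-congruence $r$ enveloped by both $f$ and $\hat{f}$, so that $r_i\in f_i\cap\hat{f}_i$ for all $i$ (such an $r$ exists, for instance by Lemma~\ref{lem:r_congruence_from_legendre}). Let $\gamma$ be a $(1)$-coordinate line. By definition of a discrete channel surface, the $(1)$-curvature sphere of $f$ equals one fixed sphere $S$ along $\gamma$, so $r_i$ and $S$ both lie in the contact element $f_i$ and in particular $r_i\perp S$ for every vertex $i$ of $\gamma$. Moreover, by the curvature-sphere characterization of discrete channel surfaces (\cite[Proposition~2.4]{discrete_channel}), the transversal curvature spheres $s^{(2)}$ of $f$ along a $(1)$-coordinate ribbon containing $\gamma$ span a fixed $(2,1)$-plane $D$; since $S$ is in oriented contact with each of them, $S\in D^{\perp}$, and as $D$ is non-degenerate, $S\notin D$. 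Because $f_i$ is spanned by $S$ together with the curvature sphere of the $(2)$-edge at $i$, which lies in $D$, we get $\mathfrak{r}_i\in\langle S\rangle\oplus D$ for every $i$ on $\gamma$. Hence $\operatorname{span}\{\mathfrak{r}_i:i\in\gamma\}$ is contained in a fixed subspace of $\mathbb{R}^{4,2}$ of dimension at most four.

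Next I would propagate this to $\hat{f}$ along $\gamma$. Starting from one contact element $\hat{f}_{i_0}=\spann{r_{i_0},\hat{s}_0}$, every neighbouring contact element of $\hat{f}$ on $\gamma$ is spanned by the R-sphere at its vertex and by the curvature sphere it shares with the preceding contact element, and that curvature sphere already lies in the preceding contact element. An induction along $\gamma$ thus places every $\hat{f}_i$ with $i\in\gamma$ inside the fixed subspace $W:=\langle\hat{s}_0\rangle+\operatorname{span}\{\mathfrak{r}_i:i\in\gamma\}$, which has dimension at most five. Any nonzero $\mathfrak{a}\in W^{\perp}$ then defines a linear sphere complex $\mathfrak{a}^{\perp}$ containing every contact element of $\hat{f}$ along $\gamma$. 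If $\hat{f}$ is nowhere circular, $\mathfrak{a}^{\perp}$ is forced to coincide with the unique osculating complex of $\hat{f}$ at each edge of $\gamma$, so the osculating complexes are constant along $\gamma$ and Proposition~\ref{spherical_osculating} applies; the non-generic $(1)$-coordinate lines, along which $\hat{f}$ is circular, are spherical a fortiori.

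The one substantial point is the bound $\dim\operatorname{span}\{\mathfrak{r}_i:i\in\gamma\}\le 4$. For a \emph{general} envelope the R-spheres along a coordinate line span no proper subspace at all, and it is precisely the channel geometry of $f$ — one family of curvature spheres constant, the transversal family confined to a fixed $(2,1)$-plane along each ribbon — that squeezes them into the four-dimensional space $\langle S\rangle\oplus D$. Everything else is elementary linear algebra, combined with the already-established fact that the contact elements of an envelope are transported along the R-congruence inside the spans generated by the R-spheres together with one initial curvature sphere (Lemma~\ref{transport_contact}).
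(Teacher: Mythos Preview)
Your proposal is correct and follows essentially the same route as the paper: both arguments confine the R-spheres along a $(1)$-coordinate line to the four-dimensional space $\langle S\rangle\oplus D$ coming from the channel structure, then conclude that the contact elements of any envelope along that line sit in a space of codimension at least one, hence in a fixed linear sphere complex, and invoke Proposition~\ref{spherical_osculating}. The only cosmetic difference is the propagation step: the paper notes that the vectors $\one{n}$ of the ribbon lie in $\langle S\rangle\oplus D$ and uses the Lie inversions $\one{\sigma}$ to transport contact elements (Lemma~\ref{transport_contact}), whereas you run an explicit induction via shared curvature spheres; both produce the same five-dimensional containment $W=\langle\hat{s}_0\rangle+(\langle S\rangle\oplus D)$.
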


\begin{proof}
Let $f$ be a discrete channel surface with circular $(1)$-direction and denote by $r$ a discrete R-congruence of $f$. Contemplate a $(1)$-coordinate line with an adjacent coordinate ribbon: we denote by $s^{(1)}_i$ the constant curvature sphere of $f$ and by  $D^{(2)}_{ij}$ the $(2,1)$-plane containing the curvature spheres of the other curvature sphere congruence along the coordinate ribbon. Then, the contact elements of $f$ along this coordinate line lie in the 3-dimensional space $s^{(1)}_i \oplus D^{(2)}_{ij}$. Hence, in particular, the R-spheres of the discrete R-congruence along this coordinate line, as well as the elements $\one{n}$ along the coordinate ribbon, lie in this space. 

Therefore, the contact elements along each $(1)$-coordinate line of any envelope of $r$ lie in a fixed linear sphere complex and, by Proposition  \ref{spherical_osculating}, we indeed obtain an envelope with a family of spherical curvature lines.  
\end{proof}

\medskip

%
%
\bibliographystyle{abbrv}
\bibliography{mybib}

\medskip
\medskip

\begin{minipage}{7cm}
\textbf{Thilo R\"orig}
\\TU Berlin, Institute of Mathematics
\\Secr. MA 8-4, 
\\10623 Berlin, Germany 
\\roerig@math.tu-berlin.de
\end{minipage}
\begin{minipage}{6.6cm}
\textbf{Gudrun Szewieczek}
\\TU Wien
\\Wiedner Hauptstra\ss e 8-10/104 
\\1040 Vienna, Austria
\\gudrun@geometrie.tuwien.ac.at
\end{minipage}

\end{document}